\documentclass[a4paper,oneside,11pt]{article}
\usepackage{amsmath}
\usepackage{amssymb}
\usepackage{amsthm}
\usepackage{MnSymbol}
\usepackage{enumitem}
\usepackage{bm}
\usepackage{graphicx}
\usepackage[margin = 3cm]{geometry}
\usepackage{color}

\newcommand{\N}{\mathbb{N}}
\newcommand{\R}{\mathbb{R}}
\newcommand{\Y}{\mathbb{Y}}
\newcommand{\JJ}{\mathcal{J}}

\newcommand{\bH}{\bm{H}}

\newcommand{\BB}{\mathcal{B}}
\newcommand{\GG}{\mathcal{G}}
\newcommand{\HH}{\mathcal{H}}
\newcommand{\LL}{\bm{L}}
\newcommand{\K}{\mathbb{K}}
\newcommand{\e}[1]{#1_{\eps}}
\newcommand{\supp}[1]{\mathrm{supp}(#1)}
\newcommand{\dHaus}{\, \mathrm{d} \mathcal{H}^{d-1} \,}
\newcommand{\Haus}{\mathcal{H}^{d-1}}
\newcommand{\dd}{\, \mathrm{d} \,}

\newcommand{\dx}{\, \mathrm{dx}\,}
\newcommand{\ds}{\, \mathrm{ds}\,}

\newcommand{\pd}{\partial}
\newcommand{\eps}{\varepsilon}
\newcommand{\der}{ \mathrm{D} }
\newcommand{\id}{\, \bm{\mathrm{I}}\,}
\newcommand{\abs}[1]{\left| \, #1 \, \right|}
\newcommand{\norm}[1]{\| #1 \|}

\newcommand{\inner}[2]{\langle #1 , #2 \rangle}
\newcommand{\biginner}[2]{\left\langle #1 , #2 \right\rangle}

\newcommand{\Laplace}{\Delta}
\renewcommand{\div}{\, \mathrm{div}\,}
\newcommand{\hp}{s}
\newcommand{\flow}{\bm{u}_{\infty}}
\newcommand{\linvel}{\bm{w}_{\eps}}
\newcommand{\linp}{r_{\eps}}

\newtheorem{thm}{Theorem}[section]
\newtheorem{lemma}[thm]{Lemma}

\newtheorem{remark}{Remark}[section]

\numberwithin{equation}{section}

\newenvironment{change}{\color{black}}{\color{black}}
\newcommand{\AAA}{\begin{change}}
\newcommand{\BBB}{\end{change}}

\begin{document}

\title{
A phase field approach to shape optimization in Navier--Stokes flow with integral state constraints
}

\author{Harald Garcke\footnotemark[1] 
\and Michael Hinze\footnotemark[2] 
\and Christian Kahle\footnotemark[3] 
\and Kei Fong Lam\footnotemark[4]}

\date{ }

\maketitle

\renewcommand{\thefootnote}{\fnsymbol{footnote}}

\footnotetext[1]{Fakult\"at f\"ur Mathematik, Universit\"at Regensburg, 93040 Regensburg, Germany
({\tt Harald.Garcke@mathematik.uni-regensburg.de}).}
\footnotetext[2]{Schwerpunkt Optimierung und Approximation, 
Fachbereich Mathematik, 
Universit\"at Hamburg, 
Bundesstrasse 55, 
20146 Hamburg, 
Germany ({\tt Michael.Hinze@uni-hamburg.de}).} 
\footnotetext[3]{Lehrstuhl f\"ur Optimalsteuerung, 
Zentrum Mathematik,
Technische Universit\"at M\"unchen, 
Garching bei M\"unchen, Germany 
({\tt Christian.Kahle@ma.tum.de}).}
\footnotetext[4]{Department of Mathematics, The Chinese University of Hong Kong, Shatin, N.T., Hong Kong ({\tt kflam@math.cuhk.edu.hk}).}

\renewcommand{\thefootnote}{\arabic{footnote}}

\begin{abstract}
We consider the shape optimization of an object in Navier--Stokes flow by employing a combined phase field and porous medium approach, along with additional perimeter regularization.  By considering integral control and state constraints, we extend the results of earlier works concerning the existence of optimal shapes and the derivation of first order optimality conditions.  The control variable is a phase field function that prescribes the shape and topology of the object, while the state variables are the velocity and the pressure of the fluid.  In our analysis, we cover a multitude of constraints which include constraints on the center of mass, the volume of the fluid region, and the drag of the object.  Finally, we present numerical results of the optimization problem that is solved using the variable metric projection type (VMPT) method proposed by Blank and Rupprecht, where we consider one example of topology optimization without constraints and one example of maximizing the lift of the object with a state constraint, as well as a comparison  with earlier results for the drag minimization.
\end{abstract}

\noindent \textbf{Key words.} Topology optimization, shape optimization, phase field approach, Navier--Stokes flow, integral state constraints \\

\noindent \textbf{AMS subject classification.} 35Q35, 35Q56, 35R35, 49Q10, 49Q12, 65M22, 65M60, 76S05

\section{Introduction}\label{sec:Intro}
Fundamental to the design of aircraft and cars, as well as any technologies that would involve an object traveling within a fluid, such as wind turbines and drug delivery in biomedical applications, is the consideration of hydrodynamic forces acting on the object, for example the drag and lift forces.  The desire to construct an object with minimal drag or with maximal lift-to-drag ratio naturally leads to the notion of shape optimization in fluids, in which the problem can often be formulated in terms of an optimal control problem with PDE constraints.

Let us assume that $\Omega \subset \R^{d}$, $d = 2,3$, is a bounded domain with Lipschitz boundary, and contains a non-permeable object $B$.  We will denote the boundary of $B$ by $\Gamma := \pd B \cap \Omega$ with the outer unit normal $\bm{\nu}$, and assume that $\Gamma \cap \pd \Omega = \emptyset$, i.e., the object $B$ never touches the external boundary.  A fluid is present in the complement region $E := \Omega \setminus B$, and we assume that the velocity $\bm{u}$ and the pressure $p$ of the fluid in the region $E$ obey the stationary Navier--Stokes equations with no-slip conditions on $\Gamma$, namely,
\begin{subequations}\label{SI:NS}
\begin{alignat}{3}
-\mu \Laplace \bm{u} + (\bm{u} \cdot \nabla) \bm{u} + \nabla p & = \bm{f} && \text{ in } E, \\
\div \bm{u} & = 0 && \text{ in } E, \\
\bm{u} & = \bm{0} &&\text{ on } \Gamma, \\
\bm{u} & = \bm{g} && \text{ on } \pd E \cap \pd \Omega.
\end{alignat}
\end{subequations}
Here $\bm{f}$ denotes the external body force, $\mu$ denotes the  \AAA (constant) \BBB viscosity, and $\bm{g}$ models the
inflow and outflow on the boundary $\pd \Omega$ such that $\int_{\pd \Omega} \bm{g} \cdot
\bm{\nu}_{\pd \Omega} \dHaus = 0$, where $\bm{\nu}_{\pd \Omega}$ denotes the outer unit normal on
$\pd \Omega$.

Our present contribution is motivated from a previous numerical study \cite{GHHKL} for the shape optimization problem of maximizing the lift-to-drag ratio subject to the PDE constraint \eqref{SI:NS}.  In two spatial dimensions, \AAA the classical formulation of \BBB the lift-to-drag ratio is defined as
\begin{align}\label{LtoD}
\frac{\int_{\Gamma} \flow^{\perp} \cdot \left (\mu \left ( \nabla \bm{u} + \left ( \nabla \bm{u} \right )^{\top} \right ) - p \id \right ) \bm{\nu} \dHaus}{\int_{\Gamma} \flow \cdot \left (\mu \left ( \nabla \bm{u} + \left ( \nabla \bm{u} \right )^{\top} \right ) - p \id \right ) \bm{\nu} \dHaus},
\end{align}
where $\flow$ is the flow direction, $\flow^{\perp}$ is the perpendicular vector and $\Haus$ is the Hausdorff measure on the set $\Gamma$.  In \cite{GHHKL}, using a phase field approximation which we will detail below, the authors obtain an optimal shape similar to a \AAA non-symmetric \BBB airfoil \AAA with a small angle of attack. \BBB  However, a \AAA chief \BBB obstacle to a rigorous mathematical treatment of the problem is that it is unknown if the lift-to-drag ratio \eqref{LtoD} is bounded from above (as we want to maximize the ratio).  \AAA Furthermore, due to the fractional form of the lift-to-drag ratio \eqref{LtoD}, we also observe fractions entering in the associated adjoint system and optimality conditions computed by the formal Lagrangian method, leading to severe complications in the numerical implementation. \BBB

\AAA One idea is to study a related problem involving maximizing the lift while the drag is constrained to be below a certain threshold, i.e.,
\begin{align*}
& \max \int_{\Gamma} \flow^{\perp} \cdot \left (\mu \left ( \nabla \bm{u} + \left ( \nabla \bm{u} \right )^{\top} \right ) - p \id \right ) \bm{\nu} \dHaus \\
& \text{ subject to \eqref{SI:NS} and } \; \int_{\Gamma} \flow \cdot \left (\mu \left ( \nabla \bm{u} + \left ( \nabla \bm{u} \right )^{\top} \right ) - p \id \right ) \bm{\nu} \dHaus \leq D,
\end{align*}
where $D > 0$ is a threshold for the drag.  In this case, the problematic fractional form is replaced and analysis can be performed on the optimization problem.   In exchange we now have to deal with (integral) state constraints, and the difficulty lies in establishing the existence of the associated Lagrange multipliers.  
\BBB

To fix the setting for this paper, we now introduce a design function $\varphi : \Omega \to \{ \pm 1 \}$, where $\{\varphi = 1\} = E$ 
describes the fluid region and $\{\varphi = -1 \} = B$ is its complement.  The natural function space for the design functions is the space of bounded variations that take values $\pm 1$, i.e., $\varphi \in BV(\Omega, \{\pm 1\})$, which implies that the fluid region $E$ has finite perimeter $P_{\Omega}(E)$.  If $\varphi$ is a function of bounded variation, its distributional derivative $\der \varphi$ is a finite Radon measure which can be decomposed into a positive measure $\abs{\der \varphi}$ and a $S^{d-1}$-valued function $\bm{\nu}_{\varphi} \in L^{1}(\Omega, \abs{\der \varphi})^{d}$, \AAA where $S^{d-1}$ denotes the $(d-1)$-dimensional sphere. \BBB  The total variation for $\varphi \in BV(\Omega, \{ \pm 1 \})$, denoted by $\abs{\der \varphi}(\Omega)$, satisfies
\begin{align*}
\abs{\der \varphi}(\Omega) = 2 P_{\Omega} (\{\varphi = 1 \}),
\end{align*}
and thus we can express the Hausdorff measure \AAA $\Haus$ \BBB on the set $\Gamma$ as $ \frac{1}{2} \abs{\der \varphi}(\Omega)$.  Furthermore, the $S^{d-1}$-valued function $\bm{\nu}_{\varphi}$ can be considered as a generalized normal on the set $\pd \{\varphi = 1\}$ \AAA (see \cite{Ambrosio,EvansGariepy,Giusti} for a more detailed introduction to the theory of sets of finite perimeter and functions of bounded variation). \BBB  

For functions $b : \Omega \times \R^{d} \times \R^{d \times d} \times \R \times \R \to \R$ and $h : \Omega \times \R^{d \times d} \times \R \times \R^{d} \to \R$, we consider the following \AAA general \BBB shape optimization problem with perimeter regularization:
\begin{equation}\label{SI:Objective}
\begin{aligned}
\min_{(\varphi, \bm{u}, p)} \JJ_{0}(\varphi, \bm{u}, p) & := \int_{\Omega} b(x, \bm{u}, \nabla \bm{u}, p, \varphi) \dx \\
& + \int_{\Omega} \frac{1}{2} h(x, \nabla \bm{u}, p, \bm{\nu}_{\varphi}) \dd \abs{\der \varphi} + \frac{\gamma}{2} \abs{\der \varphi}(\Omega),
\end{aligned}
\end{equation}
subject to $\varphi \in BV(\Omega, \{ \pm 1 \})$ and $(\bm{u}, p) \in \bm{H}^{1}(E) \times L^{2}(E)$ fulfilling
\begin{subequations}\label{SI:state}
\begin{alignat}{3}
-\mu \Laplace \bm{u} + (\bm{u} \cdot \nabla) \bm{u} + \nabla p & = \bm{f} && \text{ in } E = \{\varphi = 1\}, \label{SI:state:1} \\
\div \bm{u} & = 0 && \text{ in } E, \\
\bm{u} & = \bm{g} && \text{ on } \pd \Omega \cap \pd E, \\
\bm{u} & = \bm{0} && \text{ on } \Gamma = \Omega \cap \pd E. \label{SI:noslip}
\end{alignat}
\end{subequations}
In addition, for fixed $m_{1}, m_{2} \in \N \cup \{0\}$, we impose the $m_{1}$ integral equality constraints and $m_{2}$ integral inequality constraints:
\begin{align}\label{SI:Const}
G_{i}(\varphi, \bm{u}, p) = 0 \text{ for } 1 \leq i \leq m_{1}, \quad G_{i}(\varphi, \bm{u}, p) \geq 0 \text{ for } m_{1} + 1 \leq i \leq m_{1} + m_{2},
\end{align}
where \AAA for each $1 \leq i \leq m_{1}+m_{2}$, \BBB
\begin{equation}\label{SI:IntegralConstraints}
\begin{aligned}
G_{i}(\varphi, \bm{u}, p) & := \int_{\Omega} K_{i}(x, \bm{u}, \nabla \bm{u}, p, \varphi) \dx \\
& \quad + \int_{\Omega} \frac{1}{2} \LL_{i}(x, \nabla \bm{u}, p) \cdot \bm{\nu}_{\varphi} \dd \abs{\der \varphi},
\end{aligned}
\end{equation}
for functions $K_{i} : \Omega \times \R^{d} \times \R^{d \times d} \times \R \times \R \to \R$ and $\LL_{i} : \Omega \times \R^{d \times d} \times \R \to \R^{d}$.  The parameter $\gamma > 0$ in \eqref{SI:Objective} is the weighting factor for the perimeter regularization, \AAA which is given by the term $\frac{1}{2} \abs{\der \varphi}$, and in light of the above discussion regarding the measure $\frac{1}{2} \abs{\der \varphi}$ representing the Hausdorff measure on $\Gamma$, we see that the functions $b$ and $\{K_{i}\}_{i=1}^{m_{1}+m_{2}}$ model objectives and constraints in the bulk phases $E$ and $B$, while $h$ and $\{\LL_{i} \cdot \bm{\nu}_{\varphi}\}_{i=1}^{m_{1}+m_{2}}$ model constraints on the interface $\Gamma$.   Examples of $b$, $h$, $K_{i}$ and $\LL_{i}$ are given below, and it is noteworthy to point out that there is no dependence on $\bm{u}$ in $\LL_{i}$ as the no-slip condition \eqref{SI:noslip} ensures that $\bm{u} = \bm{0}$ on $\Gamma$.  However, the gradient $\nabla \bm{u}$ may not vanish on $\Gamma$, which leads to its appearance in the surface constraints. \BBB

\AAA
The appearance of the perimeter regularization $\frac{\gamma}{2} \abs{ \der \varphi}(\Omega)$ in \eqref{SI:Objective} is motivated from the well-known difficulties regarding the mathematical treatment of shape optimization - in particular the existence of minimizers/optimal shapes are not guaranteed \cite{KPTZ,Murat,Tartar}.  However, if the shape optimization problem is additionally supplemented with a perimeter regularization, then positive results concerning existence of optimal shapes have been obtained (see for instance \cite{SHH}).
\BBB
 
Let us now give some examples of \AAA functions $b$, $h$, $K$ and $\LL$ (where we neglect the index $i$ for convenience) that are of relevance.  For a subset $A \subset \Omega$, we use the notation $\chi_{A}(x)$ to denote the characteristic function of $A$, i.e., $\chi_{A}(x) = 1$ if $ x \in A$ and $\chi_{A}(x) = 0$ if $x \in \Omega \setminus A$.  In particular, one can think of the design function $\varphi$ as $\varphi(x) = -1 + 2 \chi_{E}(x)$ which satisfies $\varphi(x) = 1$ for $x \in E$ and $\varphi(x) = -1$ for $x \in \Omega \setminus E = B$.  Hence, in the following examples for the function $b$, we can use $\frac{1}{2}(1+\varphi)$ as a restriction to the region $E$ and similarly, $\frac{1}{2}(1-\varphi)$ as a restriction to the region $B$:\BBB
\begin{itemize}
\item the total potential power \AAA of the fluid \BBB
\begin{align}\label{PotPower}
\frac{1+\varphi}{2} \left ( \frac{\mu}{2} \abs{\nabla
\bm{u}}^{2} - \bm{f} \cdot \bm{u} \right ),
\end{align}
\item the construction cost of the object $\frac{1-\varphi}{2} w(x)$, where $w$ denotes a cost function per unit volume,
\item the least square approximation 
\begin{align*}
\frac{1+\varphi}{2} \chi_{\mathcal{Q}}(x) (\delta_{1} \abs{p - p_{\mathrm{tar}}}^{2} + \delta_{2} \abs{\bm{u} - \bm{u}_{\mathrm{tar}}}^{2})
\end{align*} 
to a target velocity profile $\bm{u}_{\mathrm{tar}}$ and a target pressure profile $p_{\mathrm{tar}}$ in an
observation region $\mathcal{Q} \subset E$. Here $\delta_{1}$ and $\delta_{2}$ denote nonnegative
constants.
\end{itemize}
An example for the surface cost $h$ 
which \AAA has practical applications is \BBB the hydrodynamic force component in the direction of the unit vector $\bm{a}$, which is given as 
\begin{align}\label{hydrodynamic}
\bm{a} \cdot \left (\mu  \left ( \nabla \bm{u} + \left (\nabla \bm{u} \right )^{\top} \right ) - p \id \right ) \bm{\nu}_{\varphi},
\end{align}
where $\id$ denotes the identity tensor.  The drag of the object is given when $\bm{a}$ is parallel to the flow direction $\flow$, while the lift of the object is given when $\bm{a} = \flow^{\perp}$, the unit vector perpendicular to the flow direction.

\bigskip

Examples of integral constraints \AAA that are of interests include \BBB
\begin{itemize}
\item volume constraints on the amount of fluid -  \AAA setting $K_{1} = \varphi - \beta_{1}$, $\LL_{1} = \bm{0}$, $K_{2} = \beta_{2} - \varphi$ and $\LL_{2} = \bm{0}$ \BBB for fixed constants $-1 < \beta_{1} \leq \beta_{2} < 1$ leads to inequality constraints:
\begin{align*}
\AAA G_{1}(\varphi) = \int_{\Omega} \varphi - \beta_{1} \dx \geq 0, \quad G_{2}(\varphi) = \int_{\Omega} \beta_{2} - \varphi \dx \geq 0, \BBB
\end{align*}
\AAA or equivalently \BBB
\begin{equation}\label{SI:VolumeConstraint}
\begin{aligned}
& \frac{\beta_{1} + 1}{2} \abs{\Omega} \leq \int_{\Omega} \frac{1+\varphi}{2} \dx = \abs{E}  \leq \frac{\beta_{2}+1}{2} \abs{\Omega} \\
\Leftrightarrow & \; \beta_{1} \abs{\Omega} \leq \int_{\Omega} \varphi \dx \leq \beta_{2} \abs{\Omega},
\end{aligned}
\end{equation}
\item the prescribed mass of the object - \AAA setting \BBB $K_{1} = M \abs{\Omega}^{-1} - \frac{1-\varphi}{2}\rho(x)$, \AAA $ \LL_{1} = \bm{0}$, \BBB where $\rho(x)$ is a mass density and $M > 0$ is a target/maximal mass leads to the inequality constraint
\begin{align}\label{SI:MassConstraint}
\AAA G_{1}(\varphi) = M - \int_{\Omega} \tfrac{1}{2} \rho(x) (1-\varphi) \dx  \geq 0 \Leftrightarrow \BBB \int_{\Omega} \tfrac{1}{2} \rho(x) (1-\varphi) \dx \leq M,
\end{align}
\item the prescribed center of mass of the object (with uniform mass density) at a point $y$ in the interior of $\Omega$, i.e., $y \notin \pd \Omega$ - \AAA setting \BBB $K_{i} = \frac{1-\varphi}{2}(x_{i} - y_{i})$ \AAA and $\LL_{i} = \bm{0}$ \BBB for $1 \leq i \leq d$ leads to the equality constraints
\begin{align}\label{SI:CoM}
G_{i}(\varphi) = \int_{\Omega} \tfrac{1}{2} (1-\varphi)( x_{i} - y_{i}) \dx = 0 \text{ for } i = 1, 2, \dots, d,
\end{align}
\item the prescribed drag of the object - \AAA setting \BBB $\LL_{1} = - \mu ( \nabla \bm{u} + (\nabla \bm{u})^{\top})\bm{a} + p \bm{a}$, $K_{1} = D \abs{\Omega}^{-1}$, where $\bm{a}$ is the unit vector parallel to the flow direction $\flow$ and $D > 0$ is a maximal drag value leads to the inequality constraint
\begin{equation}\label{SI:DragCon}
\begin{aligned}
& G_{1}(\varphi, \bm{u}, p) = D - \bm{a} \cdot \int_{\Omega} \left ( \mu ( \nabla \bm{u} + (\nabla \bm{u})^{\top}) \bm{\nu}_{\varphi} - p \bm{\nu}_{\varphi} \right ) \frac{1}{2} \dd \abs{\der \varphi} \geq 0 \\
& \Leftrightarrow \; \bm{a} \cdot \int_{\Omega} \left ( \mu ( \nabla \bm{u} + (\nabla \bm{u})^{\top}) \bm{\nu}_{\varphi} - p \bm{\nu}_{\varphi} \right ) \frac{1}{2} \dd \abs{\der \varphi} \leq D.
\end{aligned}
\end{equation}
\end{itemize}
In the examples of the cost functional described above, the problem involving minimizing the drag of the object has received much attention and is well-studied in the literature, see \cite{Boisgerault,Bello97,Pironneau,PlotSoko,Simon91} and the references therein.  For the formal derivation of shape derivatives with general volume and boundary objective functionals in Navier--Stokes flow, we refer the reader to \cite{SS10}, but to the authors' best knowledge, the shape optimization problem with integral state constraint has not received much attention.  

\AAA
In this work, we study a phase field approximation of the problem \eqref{SI:Objective}-\eqref{SI:IntegralConstraints}, which is derived in Sec.~\ref{sec:PF}.  Under appropriate assumptions we prove the existence of minimizers to the phase field optimization problem and derive the first order optimality conditions.  The main difficulty we encounter is establishing the existence of Lagrange multipliers, which is achieved via constraint qualifications such as the Zowe--Kurcyusz constraint qualification \cite{ZoweKurcyusz}, for some of the integral state constraints mentioned above.  We give two examples: one involves constraints that depend only on the design function $\varphi$ which are the volume \eqref{SI:VolumeConstraint}, the prescribed mass \eqref{SI:MassConstraint} and the center of mass \eqref{SI:CoM}, while the second example involves the total potential power \eqref{PotPower}.  We encounter some technical difficulties regarding the drag constraint \eqref{SI:DragCon}, and can only show the existence of Lagrange multipliers if the threshold $D$ is sufficiently large.  Via numerical simulations we give a proof of concept showing that with the help of the phase field approach shape and topology optimization for fluid flow taking state constraints can be solved.  For large Reynolds number problems more efficient numerical solution methods have to be devised in the future.
\BBB

The rest of the paper is organized as follows:  In Sec.~\ref{sec:PF}, we present the phase field approximation of \eqref{SI:Objective}-\eqref{SI:IntegralConstraints} that utilizes the porous-medium approach of Borrvall and Petersson \cite{BP03}, and state several preliminary results on the state equations.  Then, in Sec.~\ref{sec:min}, we state the assumptions on $b$, $h$, $K_{i}$ and $\LL_{i}$ that allows us to establish the existence of minimizers to the phase field shape optimization problem.  Sufficient conditions on the differentiability of $b$, $h$, $K_{i}$ and $\LL_{i}$ are outlined in Sec.~\ref{sec:opt} which lead to the existence of Lagrange multipliers, the solvability of the adjoint system, and the derivation of the necessary optimality conditions.  We verify the aforementioned conditions in Sec.~\ref{sec:VerifyZK} for two specific examples of integral constraints; the first example involves constraints on the mass, center of mass and volume, while the second example involves a constraint on the total potential power.  Lastly, in Sec.~\ref{sec:numerics} we briefly outline our numerical approach to solving the optimality conditions, and present several numerical simulations.

\section{Phase field formulation}\label{sec:PF}
\AAA One approach to tackle shape optimization problems that can yield rigorous mathematical results is to employ a phase field approximation, similar in spirit to Bourdin and Chambolle \cite{BC03} that was applied to topology optimization (see also \cite{Blank,PRW,Takezawa,WangZhou} and the reference cited therein), and has been recently used for drag minimization in stationary Stokes flow \cite{GarckeHechtStokes} and in stationary Navier--Stokes flow \cite{GarckeHechtNS,GHHKNumerics,GHHKL,Kondoh}. \BBB

\AAA The approach we take in this paper is similar to the previous works \cite{GarckeHechtNS,GarckeHechtStokes,GHHKL}, in which \BBB we relax the condition that the design function $\varphi$ takes only values in $\{ \pm 1 \}$ (i.e., $\varphi \in BV(\Omega, \{ \pm 1 \})$) and now allow $\varphi$ to be a function with values in $\R$ and inherits $H^{1}(\Omega)$ regularity.  \AAA In particular, we change the admissible space of design functions from subsets of $BV(\Omega, \{ \pm 1 \})$ to subsets of $H^{1}(\Omega)$. \BBB This leads to the development of interfacial layers $\{-1 < \varphi < 1\}$ in between the fluid region $E = \{\varphi = 1\}$ and the object region $B = \{\varphi = -1 \}$.  This interfacial layer replaces the boundary $\Gamma$ of $B$ and a parameter $\eps > 0$ is associated to the thickness of the interfacial layer.  The idea \AAA is to reformulate the original shape optimization problem \eqref{SI:Objective}-\eqref{SI:IntegralConstraints} by taking into account the above modification of the design functions. For the perimeter regularization, we can use \BBB the \AAA scaled \BBB Ginzburg--Landau energy functional
\begin{align*}
\frac{1}{2 c_{0}}\e{\mathcal{E}}(\varphi) = \frac{1}{2 c_{0}}\int_{\Omega} \frac{\eps}{2} \abs{\nabla \varphi}^{2} + \frac{1}{\eps} \Psi(\varphi) \dx,
\end{align*}
where $\Psi$ is a potential with equal minima at $\varphi = \pm 1$, to approximate the perimeter functional $P_{\Omega}$.  The positive constant $c_{0}$ is dependent only on the potential $\Psi$ via the relation
\begin{align}\label{defn:c0}
c_{0} := \frac{1}{2} \int_{-1}^{1} \sqrt{2 \Psi(s)} \ds,
\end{align}
and it is well-known that $\frac{1}{2 c_{0}}\e{\mathcal{E}}$ approximates $\varphi \mapsto \frac{1}{2} \abs{\der \varphi}(\Omega) = P_{\Omega}(\{\varphi = 1\})$ in the sense of $\Gamma$-convergence \cite{Modica}.

By introducing an interfacial region between the fluid and the object, we have relaxed the non-permeability assumption of the object in the vicinity of its boundary.  Therefore, we use the so-called porous medium approach of Borrvall and Petersson \cite{BP03} and replace the object $B$ with a porous medium of small permeability $(\overline{\alpha_{\eps}})^{-1} \ll 1$.  A function $\alpha_{\eps}(\varphi)$ is introduced to interpolate between the inverse permeabilities of the fluid region $\alpha_{\eps}(1) = 0$ and the porous medium $\alpha_{\eps}(-1) = \overline{\alpha_{\eps}}$, which satisfies
\begin{align*}
\overline{\alpha_{\eps}} \to \infty \text{ as } \eps \to 0.
\end{align*} 
With this, we extend the state equations from $E$ to the whole domain $\Omega$ by the addition of the \emph{porous-medium} term $\alpha_{\eps}(\varphi) \bm{u}$:
\begin{subequations}\label{PF:state}
\begin{alignat}{3}
\alpha_{\eps}(\varphi) \bm{u} -\mu \Laplace \bm{u} + (\bm{u} \cdot \nabla) \bm{u} + \nabla p & = \bm{f} && \text{ in } \Omega, \label{PF:NS} \\
\div \bm{u} & = 0 && \text{ in } \Omega, \\
\bm{u} & = \bm{g} && \text{ on } \pd \Omega.
\end{alignat}
\end{subequations}
We note that this additional term vanishes in the fluid region, and in the limit $\eps \to 0$, one expects the velocity $\bm{u}$ in the object region to vanish.  \AAA We point out that in the modified state equations \eqref{PF:state} we solve for a velocity field $\bm{u}$ and pressure field $p$ that are defined on the fixed domain $\Omega$.  Furthermore, in the objective functional \eqref{SI:Objective} and in the integral state constraints \eqref{SI:IntegralConstraints}, the terms
\begin{align*}
\int_{\Omega} b(x, \bm{u}, \nabla \bm{u}, p, \varphi) \dx \text{ and } \int_{\Omega} K_{i}(x, \bm{u}, \nabla \bm{u}, p, \varphi) \dx
\end{align*}
require no modification when we consider the phase field setting.  For the surface terms such as
\begin{align*}
\int_{\Omega} \frac{1}{2} h(x, \nabla \bm{u}, p, \bm{\nu}_{\varphi}) \dd \abs{\der \varphi}  \text{ and } \int_{\Omega} \frac{1}{2} \LL_{i}(x, \nabla \bm{u}, p) \cdot \bm{\nu}_{\varphi} \dd \abs{\der \varphi}
\end{align*}
arising from the objective functional \eqref{SI:Objective} and the integral state constraints \eqref{SI:IntegralConstraints}, we employ the idea in \cite{GHHKL} to reformulate them.  Assuming the function $h$ is one-homogeneous with respect to its last variable, which is true for the case of the hydrodynamic force \eqref{hydrodynamic}, we \BBB use the vector-valued measure with density $\frac{1}{2} \nabla \varphi$ as an approximation to $\frac{1}{2} \bm{\nu}_{\varphi} \dd \abs{\der \varphi}$, see \cite[\S 3.2]{GHHKL} for more details.  This in turn gives us the phase field approximation
\begin{align*}
\int_{\Omega} \frac{1}{2} h(x, \nabla \bm{u}, p, \nabla \varphi) \dx 
\end{align*}
to the surface objective involving the function $h$ \AAA and from this point onward we will assume that $h$ is one-homogeneous with respect to its last variable.  By a similar argument, we see that
\begin{align*}
\int_{\Omega} \frac{1}{2} \LL_{i}(x, \nabla \bm{u}, p) \cdot \nabla \varphi \dx
\end{align*}
is a phase field approximation of the surface integral constraint involving $\LL_{i}$.
\BBB 

\AAA Recall that in the modified state equations \eqref{PF:state} the porous-medium term $\alpha_{\eps}(\varphi) \bm{u}$ serves to enforce the condition that the velocity $\bm{u}$ in the object region should vanish in the limit $\eps \to 0$.  In earlier works motivated by the paper of Borrvall and Petersson \cite{BP03} the authors of \cite{GarckeHechtStokes,HechtThesis} also added to the phase field objective functional
\begin{align*}
\int_{\Omega} b(x, \bm{u}, \nabla \bm{u}, p, \varphi) + \frac{1}{2} h(x, \nabla \bm{u}, p, \nabla \varphi) + \frac{\gamma}{2c_{0}} \left ( \frac{\eps}{2} \abs{\nabla \varphi}^{2} + \frac{1}{\eps} \Psi(\varphi) \right ) \dx
\end{align*}
a penalization term
\begin{align}\label{alpha:obj}
\int_{\Omega} \frac{1}{2} \hat{\alpha}_{\eps}(\varphi) \abs{\bm{u}}^{2} \dx,
\end{align}
where $\hat{\alpha}_{\eps}$ is a function with similar properties as $\alpha_{\eps}$, i.e., $\hat{\alpha}_{\eps}(1) = 0$ and $\hat{\alpha}_{\eps}(-1) \to \infty$ as $\eps \to 0$.  In fact, in the rigorous analysis of the phase field approximation in Stokes flow, the addition of penalization term \eqref{alpha:obj} to the objective functional does indeed lead to the velocity field vanishing in the object region as $\eps \to 0$ (see \cite[\S 3]{GarckeHechtStokes} and \cite[\S 6.3]{HechtThesis} for more details).  In this paper we consider including both elements in the analytical treatment of the optimization problem.  It is also possible to consider $\hat{\alpha}_{\eps} = 0$, however in this case no rigorous results on the sharp interface limit $\eps \to 0$ are known. \BBB

\AAA
Before we state the phase field optimization problem, let us mention that for the analysis we assume that the function $\alpha_{\eps}$ in the porous-medium term satisfies the properties:
\begin{enumerate}[label=$(\mathrm{A0})$, ref = $\mathrm{A0}$, leftmargin=*]
\item  \label{assump:alpha} $\alpha_{\eps} \in C^{1,1}(\R)$ is non-negative, and there exist constants $s_{a}, s_{b} \in \R$ with $s_{a} \leq -1$ and $s_{b} \geq 1$ such that
\begin{equation}\label{alphaeps:ass}
\begin{aligned}
\alpha_{\eps}(s) = \alpha_{\eps}(s_{a}) \quad \forall s \leq s_{a}, \\
\alpha_{\eps}(s) = \alpha_{\eps}(s_{b}) \quad \forall s \geq s_{b}.
\end{aligned}
\end{equation}
\end{enumerate}
In particular, for arbitrary $\phi$ and its truncation $\tilde{\phi} := \max(s_{a}, \min(s_{b},\phi))$ we see that $\alpha_{\eps}(\phi) = \alpha_{\eps}(\tilde{\phi})$, and so the state equations \eqref{PF:state} for $\phi$ and $\tilde{\phi}$ are equivalent.  Hence, without loss of generality, we now search for optimal design functions $\varphi$ exhibiting $H^{1}(\Omega)$-regularity and satisfies the pointwise bounds $s_{a} \leq \varphi \leq s_{b}$ a.e. in $\Omega$.
\BBB

\AAA Taking into account the above discussions, we arrive at the following phase field approximation to the optimal control problem \eqref{SI:Objective}-\eqref{SI:IntegralConstraints}: \BBB
\begin{equation}\label{OpProblem}
\begin{aligned}
\min_{(\varphi, \bm{u}, p)} \JJ_{\eps}(\varphi, \bm{u}, p) & := \int_{\Omega} \frac{1}{2} \hat{\alpha}_{\eps}(\varphi)  \abs{\bm{u}}^{2}  +  b(x, \bm{u}, \nabla \bm{u}, p, \varphi) \dx \\
& + \int_{\Omega} \frac{1}{2} h(x, \nabla \bm{u}, p, \nabla \varphi) + \frac{\gamma}{2c_{0}} \left ( \frac{1}{\eps} \Psi(\varphi) + \frac{\eps}{2} \abs{\nabla \varphi}^{2} \right ) \dx,
\end{aligned}
\end{equation}
subject to 
\begin{align*}
\varphi & \in \AAA \Phi := \{ f \in H^{1}(\Omega) \, | \, s_{a} \leq f \leq s_{b} \text{ a.e. in } \Omega \} \subset H^{1}(\Omega) \cap L^{\infty}(\Omega), \BBB \\ 
\bm{u} & \in \bH^{1}_{\bm{g},\sigma}(\Omega) := \left \{ \bm{h} \in \bH^{1}(\Omega) \, | \, \div \bm{h} = 0 \text{ in } \Omega \text{ and } \bm{h} = \bm{g} \text{ on } \pd \Omega \right \}, \\
p & \in L^{2}_{0}(\Omega) := \left \{ h \in L^{2}(\Omega) \, \big{|} \, \int_{\Omega} h \dx = 0 \right \}
\end{align*}
satisfying the weak formulation of \eqref{PF:state}:
\begin{align}\label{NSweak}
\int_{\Omega} \alpha_{\eps}(\varphi) \bm{u} \cdot \bm{v} + \mu \nabla \bm{u} \cdot \nabla \bm{v} + (\bm{u} \cdot \nabla) \bm{u} \cdot \bm{v} - p \div \bm{v} - \bm{f} \cdot \bm{v} \dx 
\end{align}
for all $\bm{v} \in \bm{H}^{1}_{0}(\Omega) := \{ \bm{h} \in \bm{H}^{1}(\Omega) \, | \, \bm{h} = \bm{0} \text{ on } \pd \Omega \}$, along with $m_{1}$ equality and $m_{2}$ inequality integral constraints
\begin{align*}
G_{j}(\varphi, \bm{u}, p)&  = 0 \text{ for } 1 \leq j \leq m_{1}, \quad G_{m_{1}+k}(\varphi, \bm{u}, p) \geq 0 \text{ for } 1 \leq k \leq m_{2}, 
\end{align*}
of the form
\begin{align}\label{PF:constraint}
G_{i}(\varphi, \bm{u}, p) & = \int_{\Omega} K_{i}(x, \bm{u}, \nabla \bm{u}, p, \varphi) + \frac{1}{2} \nabla \varphi \cdot \LL_{i}(x, \nabla \bm{u}, p) \dx
\end{align}
for $1 \leq i \leq m_{1}+m_{2}$.

\subsection{Preliminaries on the state equations}
\AAA
Since the porous medium Navier--Stokes equation \eqref{PF:state} have been analyzed in detail in previous works \cite{GarckeHechtNS,HechtThesis}, we recall some useful results in this section.
\BBB

\begin{lemma}[{{\cite[Lem.~4.3]{GHHKL}}}]\label{lem:ExistenceState}
Suppose $\alpha_{\eps} \in C^{1,1}(\R)$ is non-negative and satisfies \eqref{alphaeps:ass}, for every $\varphi \in L^{1}(\Omega)$ there exists at least one pair $(\bm{u}, p) \in \bH^{1}_{\bm{g},\sigma}(\Omega) \times L^{2}_{0}(\Omega)$ such that \eqref{NSweak} is satisfied.  Furthermore, there exists a positive constant $C = C(\mu, \alpha_{\eps}, \bm{f}, \bm{g}, \Omega)$ independent  of $\varphi$ such that
\begin{align}\label{NSest}
\norm{\bm{u}}_{\bH^{1}(\Omega)} + \norm{p}_{L^{2}(\Omega)} \leq C.
\end{align}
\end{lemma}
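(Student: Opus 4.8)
The plan is to treat \eqref{NSweak} as a stationary Navier--Stokes system carrying an additional non-negative zeroth-order (Darcy) term $\alpha_{\eps}(\varphi)\bm{u}$, and to follow the classical existence theory via a solenoidal lifting of the boundary data, a Galerkin/Brouwer argument for the divergence-free velocity, and a de~Rham-type recovery of the pressure. First I would note that by \eqref{alphaeps:ass} the function $\alpha_{\eps}$ is continuous and constant outside $[s_{a},s_{b}]$, hence $0 \le \alpha_{\eps}(\varphi) \le \overline{\alpha_{\eps}} := \max(\alpha_{\eps}(s_{a}),\alpha_{\eps}(s_{b}))$ pointwise, uniformly in $\varphi$; this makes $\bm{v} \mapsto \int_{\Omega} \alpha_{\eps}(\varphi)\bm{u}\cdot\bm{v}\dx$ a bounded bilinear form, while its non-negativity makes it coercive, both facts being insensitive to $\varphi$. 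I would then fix a solenoidal lift $\bm{G}\in\bH^{1}(\Omega)$ of the datum $\bm{g}$ and seek $\bm{u}=\bm{G}+\bm{w}$ with $\bm{w}$ in the space $\bH^{1}_{0,\sigma}(\Omega)$ of divergence-free $\bH^{1}_{0}$ fields, on which the pressure term drops out.

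For existence of $\bm{w}$ I would set up a Galerkin scheme in a basis of $\bH^{1}_{0,\sigma}(\Omega)$ and use the standard Brouwer-type lemma (a continuous field on a ball pointing outward on the sphere has a zero) to produce finite-dimensional solutions; passage to the limit uses the compact embedding $\bH^{1}(\Omega)\hookrightarrow\bm{L}^{4}(\Omega)$, valid for $d=2,3$, to handle the convective term. The crux is the a priori estimate obtained by testing with $\bm{w}$: since $\div\bm{u}=0$ and $\bm{w}|_{\pd\Omega}=\bm{0}$, the trilinear form satisfies $\int_{\Omega}(\bm{u}\cdot\nabla)\bm{w}\cdot\bm{w}\dx=0$, so the only surviving convective contribution is $\int_{\Omega}(\bm{u}\cdot\nabla)\bm{G}\cdot\bm{w}\dx$, while the Darcy term contributes $\int_{\Omega}\alpha_{\eps}(\varphi)|\bm{w}|^{2}\dx\ge 0$. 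Together with $\mu\norm{\nabla\bm{w}}_{L^{2}}^{2}$ on the left, this yields a bound on $\norm{\nabla\bm{w}}_{L^{2}}$ depending only on $\mu$, $\bm{G}$, $\bm{f}$ and $\Omega$ --- and crucially not on $\varphi$, since $\alpha_{\eps}(\varphi)\ge 0$ works in our favour. By Poincar\'e and the fixed lift this gives the uniform velocity bound $\norm{\bm{u}}_{\bH^{1}(\Omega)}\le C$.

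The main obstacle will be controlling $\int_{\Omega}(\bm{w}\cdot\nabla)\bm{G}\cdot\bm{w}\dx$, which appears after writing $\bm{u}=\bm{G}+\bm{w}$ inside the convective term: for general (large) boundary data $\bm{g}$ this cannot be absorbed into $\mu\norm{\nabla\bm{w}}_{L^{2}}^{2}$ by a naive H\"older--Sobolev estimate. The classical remedy I would invoke is the Hopf-type construction of a solenoidal lift $\bm{G}$ concentrated in a thin boundary layer, whose relevant norm can be made arbitrarily small via Hardy's inequality, so that $|\int_{\Omega}(\bm{w}\cdot\nabla)\bm{G}\cdot\bm{w}\dx|\le\tfrac{\mu}{2}\norm{\nabla\bm{w}}_{L^{2}}^{2}$, leaving $\tfrac{\mu}{2}\norm{\nabla\bm{w}}_{L^{2}}^{2}$ to secure coercivity. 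Finally, having produced $\bm{u}$ satisfying \eqref{NSweak} for all divergence-free test functions, I would recover the pressure: the functional $\bm{v}\mapsto\int_{\Omega}\alpha_{\eps}(\varphi)\bm{u}\cdot\bm{v}+\mu\nabla\bm{u}\cdot\nabla\bm{v}+(\bm{u}\cdot\nabla)\bm{u}\cdot\bm{v}-\bm{f}\cdot\bm{v}\dx$ vanishes on $\bH^{1}_{0,\sigma}(\Omega)$, so by surjectivity of $\div$ on a Lipschitz domain (equivalently the inf--sup/Bogovskii condition, or de~Rham's theorem) there is a unique $p\in L^{2}_{0}(\Omega)$ representing it. The inf--sup constant bounds $\norm{p}_{L^{2}(\Omega)}$ by the dual norm of that functional, itself controlled by $(1+\overline{\alpha_{\eps}})\norm{\bm{u}}_{\bH^{1}}+\norm{\bm{u}}_{\bH^{1}}^{2}+\norm{\bm{f}}$ (the quadratic piece again via $\bH^{1}\hookrightarrow\bm{L}^{4}$), which explains the dependence of $C$ on $\alpha_{\eps}$ yet its independence of $\varphi$, and completes \eqref{NSest}. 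Uniqueness is not asserted, consistent with the nonlinear problem admitting possibly several solutions for large data.
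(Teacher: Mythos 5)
Your proposal is correct and takes essentially the same route as the paper's cited proof (\cite{GHHKL}, Lem.~4.3): a Hopf-type solenoidal lift $\bm{G}\in\bH^{1}_{\bm{g},\sigma}(\Omega)$, testing with $\bm{w}=\bm{u}-\bm{G}$ to obtain a $\varphi$-independent a priori bound (the non-negative Darcy term only helps), a Galerkin/Brouwer existence argument, and pressure recovery via the inf--sup/de~Rham argument --- indeed the paper's remark following the lemma sketches exactly this testing strategy. Two cosmetic points: under \eqref{assump:alpha} the uniform pointwise bound on $\alpha_{\eps}(\varphi)$ is $\max_{s\in[s_{a},s_{b}]}\alpha_{\eps}(s)$ rather than $\max(\alpha_{\eps}(s_{a}),\alpha_{\eps}(s_{b}))$, and when testing with $\bm{w}$ the Darcy term also produces the cross term $\int_{\Omega}\alpha_{\eps}(\varphi)\,\bm{G}\cdot\bm{w}\dx$, which is absorbed by Young's and Poincar\'e's inequalities and already accounts for the $\alpha_{\eps}$-dependence of $C$ at the velocity level.
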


\AAA
The estimate \eqref{NSest} can be established by testing the weak form \eqref{NSweak} with $\bm{u} - \bm{G}$, where $\bm{G} \in \bm{H}^{1}_{\bm{g}, \sigma}(\Omega)$ is a vector field depending on the inflow/outflow $\bm{g}$ and the domain $\Omega$, and  satisfying certain properties.  Furthermore, this computation also shows that the constant $C$ depends on $\eps$ only through the function $\alpha_{\eps}$.
\BBB

By the above existence result, we can define a set-valued solution operator
\begin{align}\label{solnoper}
\bm{S}_{\eps}(\varphi) := \left \{ (\bm{u},p) \in \bH^{1}_{\bm{g},\sigma}(\Omega) \times L^{2}_{0}(\Omega) \, | \, (\bm{u},p) \text{ satisfies } \eqref{NSweak} \right \}
\end{align}
for any $\varphi \in L^{1}(\Omega)$.  Next, we state a continuity property of the solution operator.
\begin{lemma}[{{\cite[Lem.~4.4 and 4.5]{GHHKL}}}]\label{lem:solnopcts}
Let $(\varphi_{k})_{k \in \N} \subset L^{1}(\Omega)$ be a sequence with corresponding solution $(\bm{u}_{k}, p_{k}) \in \bm{S}_{\eps}(\varphi_{k}) \subset \bH^{1}(\Omega) \times L^{2}(\Omega)$ for each $k \in \N$.  Suppose there exists $\varphi \in L^{1}(\Omega)$ such that
\begin{align*}
\norm{\varphi_{k} - \varphi}_{L^{1}(\Omega)} \to 0 \text{ as } k \to \infty.
\end{align*}
Then, there exists a subsequence, denoted by the same index, and functions $\bm{u} \in \bH^{1}(\Omega)$, $p \in L^{2}(\Omega)$ such that
\begin{align*}
\norm{\bm{u}_{k} - \bm{u}}_{\bH^{1}(\Omega)} \to 0, \quad \norm{p_{k} - p}_{L^{2}(\Omega)} \to 0 \text{ as } k \to \infty,
\end{align*}
with the property that $(\bm{u}, p) \in \bm{S}_{\eps}(\varphi)$.  Furthermore, it holds that
\begin{align*}
\lim_{k \to \infty} \int_{\Omega} \alpha_{\eps}(\varphi_{k}) \abs{\bm{u}_{k}}^{2} \dx = \int_{\Omega} \alpha_{\eps}(\varphi) \abs{\bm{u}}^{2} \dx.
\end{align*}
\end{lemma}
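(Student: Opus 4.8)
The plan is to combine the uniform a priori bound of Lemma~\ref{lem:ExistenceState} with compactness to extract a limit, pass to the limit in the weak formulation \eqref{NSweak}, and then upgrade the resulting weak convergence to strong convergence by testing with the difference $\bm{u}_k-\bm{u}$. First I would note that since $\alpha_\eps$ is fixed, Lemma~\ref{lem:ExistenceState} supplies a bound $\norm{\bm{u}_k}_{\bH^1(\Omega)}+\norm{p_k}_{L^2(\Omega)}\leq C$ that is uniform in $k$. Hence, along a subsequence (not relabelled), $\bm{u}_k$ converges weakly in $\bH^1(\Omega)$ to some $\bm{u}$ and $p_k$ converges weakly in $L^2(\Omega)$ to some $p$; by the compact Sobolev embedding of $\bH^1(\Omega)$ into $\bm{L}^q(\Omega)$ (for $q<6$ if $d=3$, any $q<\infty$ if $d=2$) we additionally get $\bm{u}_k\to\bm{u}$ strongly in $\bm{L}^q(\Omega)$ for $q\in\{2,3,4\}$. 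The conditions $\div\bm{u}=0$ and $\bm{u}=\bm{g}$ on $\pd\Omega$ survive the weak limit, so $\bm{u}\in\bH^1_{\bg,\sigma}(\Omega)$ and $p\in L^2_0(\Omega)$. Passing to a further subsequence, $\varphi_k\to\varphi$ a.e.\ in $\Omega$; since $\alpha_\eps$ is continuous and bounded (being constant outside $[s_a,s_b]$ by \eqref{alphaeps:ass}), dominated convergence yields $\alpha_\eps(\varphi_k)\to\alpha_\eps(\varphi)$ in every $L^q(\Omega)$, $q<\infty$, and in particular $\alpha_\eps(\varphi_k)\bm{u}_k\to\alpha_\eps(\varphi)\bm{u}$ strongly in $\bm{L}^2(\Omega)$.

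Next I would pass to the limit in \eqref{NSweak} for an arbitrary fixed $\bm{v}\in\bm{H}^1_0(\Omega)$. The viscous and pressure terms converge by weak convergence of $\nabla\bm{u}_k$ and $p_k$, the forcing term is fixed, and the porous-medium term converges by the strong $\bm{L}^2$-convergence just established. For the convective term I would integrate by parts: using $\div\bm{u}_k=0$ and $\bm{v}=\bm{0}$ on $\pd\Omega$,
\[
\int_\Omega (\bm{u}_k\cdot\nabla)\bm{u}_k\cdot\bm{v}\dx = -\int_\Omega (\bm{u}_k\otimes\bm{u}_k):\nabla\bm{v}\dx,
\]
and since $\bm{u}_k\otimes\bm{u}_k\to\bm{u}\otimes\bm{u}$ strongly in $\bm{L}^2(\Omega)$ (from the strong $\bm{L}^4$-convergence), this passes to the limit. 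Thus $(\bm{u},p)$ satisfies \eqref{NSweak} with datum $\varphi$, i.e.\ $(\bm{u},p)\in\bm{S}_\eps(\varphi)$.

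The crux is to improve weak to strong convergence. I would test \eqref{NSweak} for $(\bm{u}_k,p_k)$ with $\bm{v}=\bm{u}_k-\bm{u}\in\bm{H}^1_0(\Omega)$ (admissible since both velocities equal $\bm{g}$ on $\pd\Omega$ and are divergence-free, so the pressure term drops out), and isolate the viscous square to obtain
\[
\mu\norm{\nabla(\bm{u}_k-\bm{u})}_{L^2(\Omega)}^2 = \int_\Omega \bigl(\bm{f}-\alpha_\eps(\varphi_k)\bm{u}_k-(\bm{u}_k\cdot\nabla)\bm{u}_k\bigr)\cdot(\bm{u}_k-\bm{u}) - \mu\,\nabla\bm{u}\cdot\nabla(\bm{u}_k-\bm{u})\dx.
\]
Every term on the right tends to zero: the forcing and porous-medium contributions because $\bm{u}_k-\bm{u}\to\bm{0}$ strongly in $\bm{L}^2(\Omega)$; the cross viscous term by weak $\bH^1$-convergence; and the convective term by Hölder, since $(\bm{u}_k\cdot\nabla)\bm{u}_k$ is bounded in $\bm{L}^{3/2}(\Omega)$ while $\bm{u}_k-\bm{u}\to\bm{0}$ in $\bm{L}^3(\Omega)$. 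This gives $\bm{u}_k\to\bm{u}$ strongly in $\bH^1(\Omega)$. For the pressure I would use the inf-sup (LBB) condition for $\div:\bm{H}^1_0(\Omega)\to L^2_0(\Omega)$ on the Lipschitz domain $\Omega$, which bounds $\norm{p_k-p}_{L^2(\Omega)}$ by the dual norm of the momentum residual; the latter is controlled by $\norm{\alpha_\eps(\varphi_k)\bm{u}_k-\alpha_\eps(\varphi)\bm{u}}_{L^2(\Omega)}$, $\norm{\nabla(\bm{u}_k-\bm{u})}_{L^2(\Omega)}$ and the $\bm{L}^{3/2}$-norm of the convective difference, all of which vanish by the preceding steps, giving $p_k\to p$ strongly in $L^2(\Omega)$. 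Finally, for the energy convergence I would split
\[
\alpha_\eps(\varphi_k)\abs{\bm{u}_k}^2 - \alpha_\eps(\varphi)\abs{\bm{u}}^2 = \alpha_\eps(\varphi_k)\bigl(\abs{\bm{u}_k}^2-\abs{\bm{u}}^2\bigr) + \bigl(\alpha_\eps(\varphi_k)-\alpha_\eps(\varphi)\bigr)\abs{\bm{u}}^2,
\]
and integrate: the first term vanishes because $\alpha_\eps$ is bounded and $\abs{\bm{u}_k}^2\to\abs{\bm{u}}^2$ in $L^1(\Omega)$, and the second by dominated convergence. I expect the convective nonlinearity in the strong-$\bH^1$ step, together with the inf-sup argument recovering strong pressure convergence, to be the main obstacle; the porous-medium term and the passage to the limit in the state equation are comparatively routine.
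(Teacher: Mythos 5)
Your proposal is correct, and it follows essentially the same route as the proof of this result in \cite{GHHKL} (Lems.~4.4 and 4.5), which the paper cites rather than reproves: the uniform bound of Lemma~\ref{lem:ExistenceState} plus compactness to extract weak limits, passage to the limit in \eqref{NSweak} with the convective term handled through strong convergence of $\bm{u}_{k} \otimes \bm{u}_{k}$, the upgrade to strong $\bH^{1}(\Omega)$ convergence by testing with $\bm{u}_{k} - \bm{u}$, and recovery of strong $L^{2}(\Omega)$ pressure convergence via the inf-sup/Bogovskii-type solvability of the divergence equation. All the individual limit arguments (boundedness of $\alpha_{\eps}$ from \eqref{alphaeps:ass}, the $L^{3/2}$--$L^{3}$ H\"{o}lder pairing for the convective residual, and the splitting for the energy identity) check out, so there is nothing to correct.
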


 In general, we do not have uniqueness of solutions to \eqref{NSweak}, but there is a conditional uniqueness result.
\begin{lemma}[{{\cite[Lem. 5]{GarckeHechtNS}}}, {{\cite[Lem. 12.2]{HechtThesis}}}] \label{lem:condUniq}
If there exists $\bm{u} \in \bm{S}_{\eps}(\varphi)$ with
\begin{align}\label{uniqcond}
\norm{\nabla \bm{u}}_{\bm{L}^{2}(\Omega)} < \frac{\mu}{K_{\Omega}}, 
\end{align}
where
\begin{align}\label{defn:KOmega}
K_{\Omega} := \begin{cases} \frac{1}{2} \abs{\Omega}^{\frac{1}{2}} & \text{ for } d = 2, \\
\frac{2 \sqrt{2}}{3} \abs{\Omega}^{\frac{1}{6}} & \text{ for } d = 3.
\end{cases}
\end{align}
Then, $\bm{S}_{\eps}(\varphi) = \{(\bm{u},p)\}$.  That is, there is exactly one solution of \eqref{NSweak} corresponding to $\varphi \in L^{1}(\Omega)$.
\end{lemma}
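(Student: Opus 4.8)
Suppose $(\bm u, p) \in \bm S_{\eps}(\varphi)$ satisfies the smallness condition \eqref{uniqcond}, and let $(\bm v, q) \in \bm S_{\eps}(\varphi)$ be an arbitrary solution of \eqref{NSweak}. Set $\bm w := \bm u - \bm v$ and $r := p - q$. Since $\bm u, \bm v \in \bH^{1}_{\bm{g},\sigma}(\Omega)$ share the same boundary datum $\bm g$ and are both solenoidal, their difference satisfies $\div \bm w = 0$ in $\Omega$ and $\bm w = \bm 0$ on $\pd \Omega$, so $\bm w \in \bm{H}^{1}_{0}(\Omega)$ and is itself an admissible test function in \eqref{NSweak}.

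The plan is to subtract the weak formulation \eqref{NSweak} for $(\bm v, q)$ from that for $(\bm u, p)$ and test the resulting identity with $\bm w$. The pressure contribution $\int_{\Omega} r \, \div \bm w \dx$ vanishes because $\bm w$ is solenoidal, while the porous-medium term yields $\int_{\Omega} \alpha_{\eps}(\varphi) \abs{\bm w}^{2} \dx \ge 0$ since $\alpha_{\eps} \ge 0$. The key algebraic manipulation is to rewrite the difference of the convective terms as
\[
(\bm u \cdot \nabla)\bm u - (\bm v \cdot \nabla)\bm v = (\bm w \cdot \nabla)\bm u + (\bm v \cdot \nabla)\bm w .
\]
Testing the second term against $\bm w$ produces $\int_{\Omega} (\bm v \cdot \nabla)\bm w \cdot \bm w \dx = \int_{\Omega} \bm v \cdot \nabla\big(\tfrac12 \abs{\bm w}^{2}\big) \dx$, which vanishes after integration by parts using $\div \bm v = 0$ and $\bm w|_{\pd \Omega} = \bm 0$. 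We are thus left with the identity
\[
\mu \norm{\nabla \bm w}_{\bm{L}^{2}(\Omega)}^{2} + \int_{\Omega} \alpha_{\eps}(\varphi) \abs{\bm w}^{2} \dx = -\int_{\Omega} (\bm w \cdot \nabla)\bm u \cdot \bm w \dx .
\]

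Next I would estimate the right-hand side by Hölder's inequality with exponents $4,2,4$, obtaining
\[
\left| \int_{\Omega} (\bm w \cdot \nabla)\bm u \cdot \bm w \dx \right| \le \norm{\bm w}_{\bm{L}^{4}(\Omega)}^{2} \, \norm{\nabla \bm u}_{\bm{L}^{2}(\Omega)} \le K_{\Omega} \norm{\nabla \bm u}_{\bm{L}^{2}(\Omega)} \norm{\nabla \bm w}_{\bm{L}^{2}(\Omega)}^{2},
\]
where the second inequality invokes the embedding $\norm{\bm w}_{\bm{L}^{4}(\Omega)}^{2} \le K_{\Omega} \norm{\nabla \bm w}_{\bm{L}^{2}(\Omega)}^{2}$ for $\bm w \in \bm{H}^{1}_{0}(\Omega)$, with $K_{\Omega}$ exactly the constant in \eqref{defn:KOmega}. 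Dropping the nonnegative porous-medium term then gives
\[
\big( \mu - K_{\Omega} \norm{\nabla \bm u}_{\bm{L}^{2}(\Omega)} \big) \norm{\nabla \bm w}_{\bm{L}^{2}(\Omega)}^{2} \le 0 .
\]
Under the hypothesis \eqref{uniqcond} the prefactor is strictly positive, forcing $\norm{\nabla \bm w}_{\bm{L}^{2}(\Omega)} = 0$; since $\bm w \in \bm{H}^{1}_{0}(\Omega)$, Poincaré's inequality yields $\bm w = \bm 0$, i.e. $\bm u = \bm v$.

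Finally, with the velocities identified, subtracting the two weak forms collapses all remaining terms except the pressure, leaving $\int_{\Omega} r \, \div \bm \phi \dx = 0$ for all $\bm \phi \in \bm{H}^{1}_{0}(\Omega)$. The surjectivity of $\div \colon \bm{H}^{1}_{0}(\Omega) \to L^{2}_{0}(\Omega)$ then shows $r \perp L^{2}_{0}(\Omega)$, so $r$ is a.e. constant, and the zero-mean constraint $p, q \in L^{2}_{0}(\Omega)$ forces $r = 0$, hence $p = q$. The only genuinely delicate point is the Sobolev embedding constant: recovering precisely the values of $K_{\Omega}$ recorded in \eqref{defn:KOmega} — rather than some nonexplicit constant — requires the sharp forms of the relevant Gagliardo--Nirenberg/Ladyzhenskaya and Poincaré inequalities, including their dependence on $\abs{\Omega}$, and this is where the bulk of the care must be taken; the rest of the argument is routine.
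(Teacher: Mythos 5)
Your argument is correct and is essentially the standard energy-contraction proof used in the cited sources \cite[Lem.~5]{GarckeHechtNS}, \cite[Lem.~12.2]{HechtThesis} (the paper itself states the lemma without proof): subtract the two weak formulations, test with $\bm{w} = \bm{u} - \bm{v}$, exploit the skew-symmetry of the convective term and non-negativity of $\alpha_{\eps}$, and absorb $b(\bm{w},\bm{u},\bm{w})$ via $\norm{\bm{w}}_{\bm{L}^{4}(\Omega)}^{2} \leq K_{\Omega} \norm{\nabla \bm{w}}_{\bm{L}^{2}(\Omega)}^{2}$, with pressure uniqueness recovered from the surjectivity of the divergence. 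One cosmetic remark: the $\abs{\Omega}$-dependence in $K_{\Omega}$ comes from the sharp Ladyzhenskaya inequality combined with the H\"older interpolation $\norm{\bm{w}}_{\bm{L}^{2}(\Omega)} \leq \abs{\Omega}^{1/4} \norm{\bm{w}}_{\bm{L}^{4}(\Omega)}$ rather than from Poincar\'e's inequality, but this does not affect the validity of your proof.
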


\AAA
The additional assumption \eqref{uniqcond} on the solution $\bm{u} \in \bm{S}_{\eps}(\varphi)$ to ensure uniqueness of the state equations can be achieved for small data $\bm{f}$ and $\bm{g}$ or with high viscosity $\mu$.  However, there are also settings in which \eqref{uniqcond} can be justified a posteriorly \cite{GarckeHechtNS}.  For the subsequent analysis, more precisely in showing the differentiability of the solution operator $\bm{S}_{\eps}$ and the derivation of the optimality conditions, we require that $\bm{S}_{\eps}$ is a one-to-one mapping.  Hence, throughout the rest of the paper we assume that \eqref{uniqcond} holds.  Alternatively, instead of assuming \eqref{uniqcond}, we can work with an isolated local solution to \eqref{NSweak}, for which the subsequent analysis is valid in a neighbourhood of this isolated local solution.
\BBB

We now state the Fr\'{e}chet differentiability of the solution operator $\bm{S}_{\eps}$.

\begin{lemma}[{\cite[Lem.~4.8]{GHHKL}}]\label{lem:SolnOpDiff}
Let $\e{\varphi} \in H^{1}(\Omega)\, \cap L^{\infty}(\Omega)$ be given such that $\bm{S}_{\eps}(\e{\varphi}) = \{(\e{\bm{u}}, \e{p})\}$.  Then, there exists a neighborhood $N$ of $\e{\varphi}$ in $H^{1}(\Omega) \cap L^{\infty}(\Omega)$ such that for every $\delta \in N$, the solution operator consists of exactly one pair, and we may write $\bm{S}_{\eps} : N \to \bH^{1}(\Omega) \times L^{2}(\Omega)$.  This mapping is differentiable at $\e{\varphi}$ with
\begin{align*}
\der \bm{S}_{\eps}(\e{\varphi})(\delta) =: (\linvel, \linp ) \in \bH^{1}_{0}(\Omega) \times L^{2}_{0}(\Omega),
\end{align*}
where $(\linvel,\linp)$ is the unique solution to 
\AAA
\begin{equation}\label{LinearizedStateSys}
\begin{aligned}
& \int_{\Omega} \alpha_{\eps}'(\varphi_{\eps}) \delta \bm{u}_{\eps} \cdot \bm{v} + \alpha_{\eps}(\varphi_{\eps}) \linvel \cdot \bm{v} + \mu \nabla \linvel \cdot \nabla \bm{v} \dx \\
& \quad + \int_{\Omega} (\linvel \cdot \nabla) \bm{u}_{\eps} \cdot \bm{v} + (\bm{u}_{\eps} \cdot \nabla) \linvel \cdot \bm{v} - \linp \div \bm{v} \dx = 0 \quad \forall \bm{v} \in \bm{H}^{1}_{0}(\Omega),
\end{aligned}
\end{equation}
which is the weak formulation of the following \BBB
the linearized state system
\begin{equation*}
\begin{alignedat}{3}
\alpha_{\eps}'(\e{\varphi}) \delta \e{\bm{u}} + \alpha_{\eps}(\e{\varphi}) \linvel - \mu \Laplace \linvel + (\linvel \cdot \nabla) \e{\bm{u}} + (\e{\bm{u}} \cdot \nabla) \linvel + \nabla \linp & = \bm{0} && \text{ in } \Omega, \\
\div \linvel & = 0 && \text{ in } \Omega, \\
\linvel & = \bm{0} && \text{ on } \pd \Omega.
\end{alignedat}
\end{equation*}
\end{lemma}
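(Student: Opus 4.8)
The plan is to realize $\bm{S}_{\eps}$ as the implicit solution of a smooth operator equation and invoke the implicit function theorem, with the uniqueness hypothesis \eqref{uniqcond} supplying the invertibility of the linearization. Fix a solenoidal lift $\bm{G} \in \bm{H}^{1}_{\bm{g},\sigma}(\Omega)$ of the boundary data (as in the remark after Lemma~\ref{lem:ExistenceState}) and write every competitor velocity as $\bm{u} = \bm{G} + \bm{z}$ with $\bm{z}$ in the solenoidal test space $\bm{V} := \{ \bm{v} \in \bm{H}^{1}_{0}(\Omega) \, | \, \div \bm{v} = 0 \}$. Testing the weak form \eqref{NSweak} only against $\bm{v} \in \bm{V}$ annihilates the pressure, so I would first define $F : (H^{1}(\Omega) \cap L^{\infty}(\Omega)) \times \bm{V} \to \bm{V}^{*}$ by
\begin{align*}
\inner{F(\varphi, \bm{z})}{\bm{v}} := \int_{\Omega} \alpha_{\eps}(\varphi) (\bm{G}+\bm{z}) \cdot \bm{v} + \mu \nabla (\bm{G}+\bm{z}) \cdot \nabla \bm{v} + ((\bm{G}+\bm{z}) \cdot \nabla)(\bm{G}+\bm{z}) \cdot \bm{v} - \bm{f} \cdot \bm{v} \dx,
\end{align*}
so that the unique pair $(\bm{u}_{\eps}, p_{\eps}) \in \bm{S}_{\eps}(\varphi_{\eps})$ corresponds to $F(\varphi_{\eps}, \bm{z}_{\eps}) = 0$ with $\bm{z}_{\eps} := \bm{u}_{\eps} - \bm{G}$.

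Next I would check that $F$ is $C^{1}$. Since $\alpha_{\eps} \in C^{1,1}(\R)$ by assumption \eqref{assump:alpha}, the Nemytskii map $\varphi \mapsto \alpha_{\eps}(\varphi)$ is continuously Fr\'echet differentiable from $L^{\infty}(\Omega)$ into $L^{\infty}(\Omega)$, with derivative given by multiplication by $\alpha_{\eps}'(\varphi)$; combined with the polynomial dependence of the remaining bilinear and trilinear forms on $\bm{z}$, this makes $F$ continuously differentiable. Its partial derivative in $\bm{z}$ at $(\varphi_{\eps}, \bm{z}_{\eps})$ is the bounded linear map $A := \pd_{\bm{z}} F(\varphi_{\eps}, \bm{z}_{\eps}) : \bm{V} \to \bm{V}^{*}$,
\begin{align*}
\inner{A \bm{w}}{\bm{v}} = \int_{\Omega} \alpha_{\eps}(\varphi_{\eps}) \bm{w} \cdot \bm{v} + \mu \nabla \bm{w} \cdot \nabla \bm{v} + (\bm{w} \cdot \nabla) \bm{u}_{\eps} \cdot \bm{v} + (\bm{u}_{\eps} \cdot \nabla) \bm{w} \cdot \bm{v} \dx.
\end{align*}

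The crux of the argument, and the step I expect to be the main obstacle, is showing that $A$ is an isomorphism. Testing with $\bm{w}$ itself, the convective term $\int_{\Omega} (\bm{u}_{\eps} \cdot \nabla) \bm{w} \cdot \bm{w} \dx$ vanishes after integration by parts since $\bm{u}_{\eps}$ is solenoidal and $\bm{w} \in \bm{H}^{1}_{0}(\Omega)$, and the porous term drops because $\alpha_{\eps} \geq 0$. The only indefinite contribution is $\int_{\Omega} (\bm{w} \cdot \nabla) \bm{u}_{\eps} \cdot \bm{w} \dx$, which by H\"older together with the Ladyzhenskaya/Sobolev embedding whose sharp constant is \emph{precisely} $K_{\Omega}$ in \eqref{defn:KOmega} obeys $\abs{\int_{\Omega} (\bm{w} \cdot \nabla) \bm{u}_{\eps} \cdot \bm{w} \dx} \leq K_{\Omega} \norm{\nabla \bm{u}_{\eps}}_{\bm{L}^{2}(\Omega)} \norm{\nabla \bm{w}}_{\bm{L}^{2}(\Omega)}^{2}$. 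Hence $\inner{A \bm{w}}{\bm{w}} \geq (\mu - K_{\Omega} \norm{\nabla \bm{u}_{\eps}}_{\bm{L}^{2}(\Omega)}) \norm{\nabla \bm{w}}_{\bm{L}^{2}(\Omega)}^{2}$, and the uniqueness condition \eqref{uniqcond} makes the prefactor strictly positive; with Poincar\'e's inequality this yields coercivity of $A$ on $\bm{V}$, and boundedness is immediate, so Lax--Milgram gives invertibility. It is exactly the interplay with the threshold $\mu / K_{\Omega}$ that must be handled carefully here.

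Finally I would conclude and recover the pressure. The implicit function theorem provides a neighborhood $N$ of $\varphi_{\eps}$ in $H^{1}(\Omega) \cap L^{\infty}(\Omega)$ and a $C^{1}$ map $\varphi \mapsto \bm{z}(\varphi)$ with $F(\varphi, \bm{z}(\varphi)) = 0$, locally unique, which gives both the single-valuedness of $\bm{S}_{\eps}$ on $N$ and the differentiability of the velocity $\bm{u}(\varphi) := \bm{G} + \bm{z}(\varphi)$. For the pressure, testing \eqref{NSweak} against all $\bm{v} \in \bm{H}^{1}_{0}(\Omega)$ defines a residual functional that vanishes on $\bm{V}$, so the inf--sup condition (de Rham) produces a unique $p(\varphi) \in L^{2}_{0}(\Omega)$ depending differentiably on $\bm{u}(\varphi)$. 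Differentiating $F(\varphi, \bm{z}(\varphi)) = 0$ at $\varphi_{\eps}$ in a direction $\delta$ gives $\pd_{\varphi} F(\varphi_{\eps}, \bm{z}_{\eps}) \delta + A\, (\der \bm{z}(\varphi_{\eps}) \delta) = 0$; since $\pd_{\varphi} F(\varphi_{\eps}, \bm{z}_{\eps}) \delta$ is represented by $\int_{\Omega} \alpha_{\eps}'(\varphi_{\eps}) \delta\, \bm{u}_{\eps} \cdot \bm{v} \dx$, setting $\linvel := \der \bm{u}(\varphi_{\eps})(\delta) = \der \bm{z}(\varphi_{\eps}) \delta$ recovers precisely \eqref{LinearizedStateSys} on $\bm{V}$, and reinserting $\linp$ through the same inf--sup argument yields the full linearized system.
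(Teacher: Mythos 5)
Your route---reduction to the divergence-free space $\bm{V}$ after subtracting a solenoidal lift $\bm{G}$, $C^{1}$-Nemytskii differentiability of $\alpha_{\eps}$ on $L^{\infty}(\Omega)$, coercivity of the linearized operator from the trilinear estimate with constant $K_{\Omega}$ together with the strict inequality \eqref{uniqcond}, the implicit function theorem, and recovery of the pressure via de Rham/inf--sup---is in substance the argument of the cited source \cite[Lem.~4.8]{GHHKL}; the present paper gives no proof of its own, it simply quotes that result. Your computation of $\pd_{\bm{z}}F$, the cancellation of $\int_{\Omega}(\bm{u}_{\eps}\cdot\nabla)\bm{w}\cdot\bm{w}\dx$, the bound $\abs{\int_{\Omega}(\bm{w}\cdot\nabla)\bm{u}_{\eps}\cdot\bm{w}\dx}\leq K_{\Omega}\norm{\nabla\bm{u}_{\eps}}_{\bm{L}^{2}(\Omega)}\norm{\nabla\bm{w}}_{\bm{L}^{2}(\Omega)}^{2}$, and the identification of $\der\bm{S}_{\eps}(\e{\varphi})(\delta)$ with the solution of \eqref{LinearizedStateSys} are all correct, and you rightly identified \eqref{uniqcond} as the crux: the hypothesis that $\bm{S}_{\eps}(\e{\varphi})$ is a singleton would not by itself make the linearization invertible; it is the standing assumption \eqref{uniqcond}, imposed in the paper after Lem.~\ref{lem:condUniq}, that does the work, and your proof should state this dependence explicitly.

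There is one concrete gap: the claim that the implicit function theorem ``gives the single-valuedness of $\bm{S}_{\eps}$ on $N$'' overreaches. The implicit function theorem yields a branch $\varphi\mapsto\bm{z}(\varphi)$ that is unique only within a neighborhood of $\bm{z}_{\eps}$ in $\bm{V}$; since $\bm{S}_{\eps}(\varphi)$ is by definition \eqref{solnoper} the set of \emph{all} weak solutions, there could a priori be further solutions far from $\bm{u}_{\eps}$ for $\varphi\in N$, and the lemma asserts exactly one pair. The repair is short but needs to be said: since $\varphi\mapsto\bm{u}(\varphi)=\bm{G}+\bm{z}(\varphi)$ is continuous and \eqref{uniqcond} is a strict inequality at $\e{\varphi}$, one may shrink $N$ so that $\norm{\nabla\bm{u}(\varphi)}_{\bm{L}^{2}(\Omega)}<\mu/K_{\Omega}$ for every $\varphi\in N$, whereupon the conditional uniqueness result Lem.~\ref{lem:condUniq} upgrades the local branch to the full solution set, $\bm{S}_{\eps}(\varphi)=\{(\bm{u}(\varphi),p(\varphi))\}$. (A minor cosmetic point: the pressure depends differentiably on the pair $(\varphi,\bm{u}(\varphi))$ jointly---the residual contains $\alpha_{\eps}(\varphi)\bm{u}$---not on $\bm{u}(\varphi)$ alone; the conclusion is unaffected since both dependencies are $C^{1}$.)
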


\section{Existence of a minimizer}\label{sec:min}
We make the following assumptions for the potential $\Psi$ and the functions $\hat{\alpha}_{\eps}$, $b$, $h$, $K_{i}$, and $\LL_{i}$.
\begin{enumerate}[label=$(\mathrm{A \arabic*})$, ref = $\mathrm{A \arabic*}$, leftmargin=*]
\item \label{assump:Psi} Let $\Psi \in C^{1,1}(\R)$ be a non-negative function such that $\Psi(s) = 0$
if and only if $s = \pm 1$, and that there exist positive constants $c_{1}$, $c_{2}$, $t_{0}$ such
that
\begin{align*}
c_{1} t^{k} \leq \Psi(t) \leq c_{2} t^{k} \quad \forall \abs{t} \geq t_{0}, k \geq 2.
\end{align*}
\item \label{assump:hatalpha} The function $\hat{\alpha} \in C^{1,1}(\R)$ satisfies the same assumptions as $\alpha_{\eps}$, i.e., $\hat{\alpha}_{\eps}$ is non-negative, with $\hat{\alpha}_{\eps}(1) = 0$, $\hat{\alpha}_{\eps}(-1) \to \infty $ as $\eps \to 0$, and fulfills \eqref{alphaeps:ass}.
\item \label{assump:b} The function $b: \Omega \times \R^{d} \times \R^{d \times d} \times \R \times \R  \to \R$ is a Carath\'{e}odory function of the form
\begin{align}\label{b:specificform}
b(x, \bm{w}, \bm{A}, s, t) := B(x, \bm{w}, \bm{A}, s) \; z(x, t),
\end{align}
for some Carath\'{e}odory functions $B: \Omega \times \R^{d} \times \R^{d \times d} \times \R \to \R$, $z : \Omega \times \R \to \R$ and there exist non-negative functions $f_{b} \in L^{1}(\Omega)$, $\{f_{b,i}\}_{i=1}^{4} \subset L^{\infty}(\Omega)$ such that for a.e. $x \in \Omega$ it holds for any $r \geq 0$, $p \geq 2$ in two-dimensions and $2 \leq p \leq 6$ in three-dimensions,
\begin{align*}
\abs{B(x, \bm{w}, \bm{A}, s)} & \leq f_{b}(x) + f_{b,1}(x) \abs{\bm{w}}^{p} + f_{b,2}(x) \abs{\bm{A}}^{2} + f_{b,3}(x) \abs{s}^{2}, \\
\abs{z(x, t)} & \leq  f_{b,4}(x) \abs{t}^{r}, 
\end{align*}
for all $s,t \in \R$, $\bm{w} \in \R^{d}$ and $\bm{A} \in \R^{d \times d}$.
\item \label{assump:h} The function $h: \Omega \times \R^{d \times d} \times \R \times \R^{d}  \to \R$ is a Carath\'{e}odory function that is one-homogeneous with respect to its last variable and there exist non-negative functions $f_{h} \in L^{1}(\Omega)$, $\{f_{h,k}\}_{k=1}^{3} \subset L^{\infty}(\Omega)$ such that for a.e. $x \in \Omega$ it holds,
\begin{align*}
\abs{h(x, \bm{A}, s, \bm{w})} & \leq f_{h}(x) + f_{h,1}(x) \abs{\bm{A}}^{2} + f_{h,2}(x) \abs{s}^{2} + f_{h,3}(x) \abs{\bm{w}}^{2},
\end{align*}
for all $s,t \in \R$, $\bm{w} \in \R^{d}$ and $\bm{A} \in \R^{d \times d}$.
\item \label{assump:Ki} For each $1 \leq i \leq m_{1}$, the function $K_{i}: \Omega \times \R^{d} \times \R^{d \times d} \times \R \times \R \to \R$ is a Carath\'{e}odory function of the form
\begin{align}\label{Ki:specialform}
K_{i}(x, \bm{w}, \bm{A}, s, t) := \mathcal{K}_{i}(x, \bm{w}, \bm{A}, s) \; y_{i}(x,t) + k_{i}(x),
\end{align}
for functions $k_{i} \in L^{1}(\Omega)$ and Carath\'{e}odory functions $\mathcal{K}_{i}: \Omega \times \R^{d} \times \R^{d \times d} \times \R \to \R$, $y_{i}: \Omega \times \R \to \R$ and there exist non-negative functions $z_{1} \in L^{1}(\Omega)$, $z_{2}, z_{3}, z_{4}, z_{5} \in L^{\infty}(\Omega)$ such that for a.e. $x \in \Omega$ it holds for any $r \geq 0$, $p \in [2,\infty)$ in two-dimensions and $p \in [2,6)$ in three-dimensions,
\begin{align*}
\abs{\mathcal{K}_{i}(x, \bm{w}, \bm{A}, s)} & \leq z_{1}(x) + z_{2}(x) \abs{\bm{w}}^{p} + z_{3}(x) \abs{\bm{A}}^{2} + z_{4}(x) \abs{s}^{2}, \\
\abs{y_{i}(x, t)} & \leq z_{5}(x) \abs{t}^{r},
\end{align*}
for all $s,t \in \R$, $\bm{w} \in \R^{d}$ and $\bm{A} \in \R^{d \times d}$.
\item \label{assump:LLi} For each $1 \leq j \leq m_{2}$, the function $\LL_{j} : \Omega \times \R^{d \times d} \times \R \to \R^{d}$ is a Carath\'{e}odory function and there exist non-negative functions $z_{6} \in L^{1}(\Omega)$, $z_{7}, z_{8} \in L^{\infty}(\Omega)$ such that for a.e. $x \in \Omega$ it holds,
\begin{align*}
\abs{\LL_{j}(x, \bm{A}, s)} & \leq z_{6}(x) + z_{7}(x) \abs{\bm{A}} + z_{8}(x) \abs{s},
\end{align*}
for all $s \in \R$ and $\bm{A} \in \R^{d \times d}$.
\item \label{Kad} We assume that the set
\begin{align*}
\K_{ad}  := \Big{\{} \varphi \in \Phi & \text{ with } (\bm{u}, p) = \bm{S}_{\eps}(\varphi) \text{ s.t. }  G_{i}(\varphi, \bm{u}, p) = 0 \text{ for } 1 \leq i \leq m_{1}, \\
& \text{ and }  G_{m_{1}+j}(\varphi, \bm{u}, p) \geq 0 \text{ for } 1 \leq j \leq m_{2} \Big{\}},
\end{align*}
is non-empty, where the phase field integral state constraints $G_{k}$, for $1 \leq k \leq m_{1}+m_{2}$, are of the form \eqref{PF:constraint} and we recall the set $\Phi$ is defined as $\{ f \in H^{1}(\Omega) \, | \, s_{a} \leq f \leq s_{b} \text{ a.e. in } \Omega \}$.
\item \label{assump:functional} The functionals $\BB : H^{1}(\Omega) \times \bH^{1}(\Omega) \times L^{2}(\Omega) \to \R$ and $\HH: H^{1}(\Omega) \times \bH^{1}(\Omega) \times L^{2}(\Omega) \to \R$ defined as
\begin{align*}
\BB(\varphi, \bm{u}, p) & := \int_{\Omega} B(x, \bm{u}, \nabla \bm{u}, p) \; z(x, \varphi) \dx, \\ \HH(\varphi, \bm{u}, p) & := \int_{\Omega} \frac{1}{2} h(x, \nabla \bm{u}, p, \nabla \varphi) \dx
\end{align*}
satisfy $\BB \vert_{\K_{ad} \times \bH^{1}_{\bm{g},\sigma}(\Omega) \times L^{2}_{0}(\Omega)}$ and $\HH \vert_{\K_{ad} \times \bH^{1}_{\bm{g},\sigma}(\Omega) \times L^{2}_{0}(\Omega)}$ are bounded from below, $\BB$ is weakly lower semicontinuous, and for all $\varphi_{n} \rightharpoonup \varphi$ in $H^{1}(\Omega)$, $\bm{u}_{n} \to \bm{u}$ in $\bH^{1}(\Omega)$, $p_{n} \to p$ in $L^{2}(\Omega)$, it holds that
\begin{align*}
\HH(\varphi, \bm{u}, p) \leq \liminf_{n \to \infty} \HH(\varphi_{n}, \bm{u}_{n}, p_{n}).
\end{align*}
\end{enumerate}

The particular forms of $b$ and $K_{i}$ are motivated from the discussions in Sec.~\ref{sec:Intro}, where $z$ and $y$ would typically be functions of the form $\frac{1+\varphi}{2}$, and the function $k_{i}$ would be of the form $D \abs{\Omega}^{-1}$.  \AAA Furthermore, the set $\K_{ad}$ is the set of admissible design functions whose elements satisfy the $m_{1}$ equality integral constraints and $m_{2}$ inequality integral constraints.  While we assume the non-emptiness of $\K_{ad}$ for the general setting here, later in Sec.~\ref{sec:VerifyZK} we show for two examples that the corresponding set $\K_{ad}$ is indeed non-empty.
\BBB 

\AAA The following weakly closed property is useful for showing the existence of minimizers to the optimal control problem  \eqref{OpProblem}-\eqref{PF:constraint}. \BBB

\begin{lemma}\label{lem:Kadweaklyclosed}
Under \eqref{assump:Ki} and \eqref{assump:LLi}, let $\{\varphi_{n}\}_{n \in \N}$ be a sequence in $\K_{ad}$ such that $\varphi_{n} \rightharpoonup \varphi \in H^{1}(\Omega)$ for some $\varphi \in H^{1}(\Omega)$, then $\varphi \in \K_{ad}$.
\end{lemma}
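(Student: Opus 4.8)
The plan is to verify, for the weak limit $\varphi$, both defining properties of $\K_{ad}$: that $\varphi$ lies in $\Phi$ with a well-defined state, and that this state continues to satisfy all the integral constraints. First I would observe that $\Phi = \{f \in H^{1}(\Omega) : s_{a} \le f \le s_{b} \text{ a.e.}\}$ is convex and strongly closed in $H^{1}(\Omega)$, hence weakly closed by Mazur's lemma, so $\varphi_{n} \rightharpoonup \varphi$ immediately gives $\varphi \in \Phi$. Next, the compact embedding $H^{1}(\Omega) \hookrightarrow\hookrightarrow L^{2}(\Omega)$ turns the weak $H^{1}$-convergence into strong convergence $\varphi_{n} \to \varphi$ in $L^{1}(\Omega)$ and, along a subsequence, a.e.\ in $\Omega$. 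I would then feed this $L^{1}$-convergent sequence into Lemma~\ref{lem:solnopcts} to extract a further subsequence (not relabelled) along which $\bm{u}_{n} \to \bm{u}$ in $\bH^{1}(\Omega)$ and $p_{n} \to p$ in $L^{2}(\Omega)$ with $(\bm{u},p) \in \bm{S}_{\eps}(\varphi)$; by the standing uniqueness assumption \eqref{uniqcond} this is the only state, so $\bm{S}_{\eps}(\varphi) = \{(\bm{u},p)\}$.

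The remaining task is to pass to the limit in each constraint, showing $G_{i}(\varphi_{n},\bm{u}_{n},p_{n}) \to G_{i}(\varphi,\bm{u},p)$, after which the equalities and inequalities are inherited by the limit. Splitting $G_{i}$ as in \eqref{PF:constraint} into a bulk part $\int_{\Omega} K_{i}$ and an interface part $\int_{\Omega} \tfrac{1}{2}\nabla\varphi \cdot \LL_{i}$, I would handle the bulk part using the product structure \eqref{Ki:specialform}: the growth bound in \eqref{assump:Ki}, together with the Sobolev embeddings controlling $\abs{\bm{u}_{n}}^{p}$, makes the map $(\bm{u},\nabla\bm{u},p) \mapsto \mathcal{K}_{i}(\cdot,\bm{u},\nabla\bm{u},p)$ continuous into $L^{1}(\Omega)$, while the uniform bound $s_{a} \le \varphi_{n} \le s_{b}$ and the a.e.\ convergence give $y_{i}(\cdot,\varphi_{n}) \to y_{i}(\cdot,\varphi)$ with a uniform $L^{\infty}$ bound and in every $L^{q}$. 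A standard $L^{1}$--$L^{\infty}$ product estimate then yields convergence of $\int_{\Omega} K_{i}$.

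I expect the interface term to be the main obstacle, since $\nabla\varphi_{n}$ converges only weakly in $\bm{L}^{2}(\Omega)$ and a product of two weakly convergent factors need not pass to the limit. The remedy is a weak--strong pairing: the linear growth bound \eqref{assump:LLi} makes the Nemytskii map $(\nabla\bm{u},p) \mapsto \LL_{i}(\cdot,\nabla\bm{u},p)$ continuous from $\bm{L}^{2}(\Omega)\times L^{2}(\Omega)$ into $\bm{L}^{2}(\Omega)$, so the strong convergences $\nabla\bm{u}_{n}\to\nabla\bm{u}$ and $p_{n}\to p$ in $L^{2}$ upgrade to $\LL_{i}(\cdot,\nabla\bm{u}_{n},p_{n}) \to \LL_{i}(\cdot,\nabla\bm{u},p)$ strongly in $\bm{L}^{2}(\Omega)$; pairing this against $\nabla\varphi_{n} \rightharpoonup \nabla\varphi$ gives convergence of the interface integrals. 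Combining the two parts yields $G_{i}(\varphi_{n},\bm{u}_{n},p_{n}) \to G_{i}(\varphi,\bm{u},p)$, so that $G_{i}(\varphi,\bm{u},p)=0$ for $1\le i\le m_{1}$ and $G_{m_{1}+j}(\varphi,\bm{u},p)\ge 0$ for $1\le j\le m_{2}$; together with $(\bm{u},p)=\bm{S}_{\eps}(\varphi)$ this shows $\varphi\in\K_{ad}$. Finally, since the limit $\varphi$ and its unique state do not depend on the extracted subsequence, the conclusion holds for the whole sequence.
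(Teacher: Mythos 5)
Your proposal is correct and follows essentially the same route as the paper's proof: strong $L^{1}$ (and a.e.) convergence of $\varphi_{n}$ via compact embedding, Lemma~\ref{lem:solnopcts} for strong convergence of the states, a weak--strong pairing of $\nabla \varphi_{n} \rightharpoonup \nabla \varphi$ against the strongly $\bm{L}^{2}$-convergent Nemytskii images $\LL_{i}(\cdot, \nabla \bm{u}_{n}, p_{n})$ for the interface term, and the same $\mathcal{K}_{i}$--$y_{i}$ product splitting with an $L^{1}$--$L^{\infty}$ estimate (the paper's $I_{1}+I_{2}$ decomposition) for the bulk term. Your use of Mazur's lemma to show $\varphi \in \Phi$ is a cosmetic variant of the paper's a.e.-convergence argument and changes nothing of substance.
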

\begin{proof}
\AAA Let $\{\varphi_{n}\}_{n \in \N}$ be a sequence in $\K_{ad}$ with weak limit $\varphi \in H^{1}(\Omega)$.  It suffices to show that if $(\bm{u}, p) \in \bm{S}_{\eps}(\varphi)$, then $G_{i}(\varphi, \bm{u}, p) = 0$ for $1 \leq i \leq m_{1}$ and $G_{m_{1}+j}(\varphi, \bm{u}, p) \geq 0$ for $1 \leq j \leq m_{2}$, which then implies that $\varphi \in \K_{ad}$.
\BBB

Let $\{(\bm{u}_{n}, p_{n})\}_{n \in \N} \subset \bm{H}^{1}_{\bm{g}, \sigma}(\Omega) \times L^{2}_{0}(\Omega)$ be the corresponding solutions to \eqref{PF:state} for $\varphi_{n}$, i.e., for each $n \in \N$, $(\bm{u}_{n}, p_{n}) \in \bm{S}_{\eps}(\varphi_{n})$.  Since $\varphi_{n} \rightharpoonup \varphi \in H^{1}(\Omega)$, by compactness we have strong convergence along subsequences $\varphi_{n_{j}} \to \varphi$ in $L^{p}(\Omega)$ for $p \in [1,\infty)$ in two dimensions and $p \in [1,6)$ in three dimensions.  Consequently, we also have $\varphi_{n_{j}} \to \varphi$ a.e. in $\Omega$ and hence $s_{a} \leq \varphi \leq s_{b}$ a.e. in $\Omega$.  Furthermore, by the assertions of Lem.~\ref{lem:solnopcts}, the corresponding solutions $\{(\bm{u}_{n_{j}}, p_{n_{j}})\}_{j \in \N}$ satisfy $\bm{u}_{n_{j}} \to \bm{u}$ in $\bm{H}^{1}(\Omega)$ and $p_{n_{j}} \to p$ in $L^{2}(\Omega)$ where $(\bm{u},p) \in \bm{S}_{\eps}(\varphi)$.

For each $1 \leq i \leq m_{1}+m_{2}$, by the continuity of $\LL_{i}$ with respect to its second and third variables, it holds that $\LL_{i}(x, \nabla \bm{u}_{n_{j}}, p_{n_{j}}) \to \LL_{i}(x, \nabla \bm{u}, p)$ a.e. in $\Omega$.  Using the growth conditions in \eqref{assump:LLi}, the strong convergences for $\{\bm{u}_{n_{j}}, p_{n_{j}}\}_{j \in \N}$ and the generalized Lebesgue dominated convergence theorem leads to
\begin{align}\label{LLstrongConv}
\LL_{i}(x, \nabla \bm{u}_{n_{j}}, p_{n_{j}}) \rightarrow \LL_{i}(x, \nabla \bm{u}, p) \text{ strongly in } \bm{L}^{2}(\Omega) \text{ as } j \to \infty.
\end{align}
Together with the weak convergence $\nabla \varphi_{n_{j}}$ to $\nabla \varphi$ in $\bm{L}^{2}(\Omega)$, we have 
\begin{align*}
\lim_{j \to \infty} \int_{\Omega} \frac{1}{2} \nabla \varphi_{n_{j}} \cdot \LL_{i}(x, \nabla \bm{u}_{n_{j}}, p_{n_{j}}) \dx = \int_{\Omega} \frac{1}{2} \nabla \varphi \cdot \LL_{i}(x, \nabla \bm{u}, p) \dx.
\end{align*}
Note that $s_{a} \leq \varphi_{n_{j}}, \varphi \leq s_{b}$ a.e. in $\Omega$ for all $j \in \N$, and thus there exists a constant $M > 0$ such that $\sup_{x \in \Omega} \left ( \abs{y_{i}(x, \varphi_{n_{j}})}, \abs{y_{i}(x, \varphi)} \right ) \leq M$ for all $n \in \N$.  Using the splitting
\begin{align*}
& \abs{\int_{\Omega} \mathcal{K}_{i}(x, \bm{u}_{n_{j}}, \nabla \bm{u}_{n_{j}}, p_{n_{j}}) \; y_{i}(x, \varphi_{n_{j}}) - \mathcal{K}_{i}(x, \bm{u}, \nabla \bm{u}, p) \; y_{i}(x, \varphi) \dx} \\
& \quad \leq \abs{\int_{\Omega} \left (\mathcal{K}_{i}(x, \bm{u}_{n_{j}}, \nabla \bm{u}_{n_{j}}, p_{n_{j}}) - \mathcal{K}_{i}(x, \bm{u}, \nabla \bm{u}, p) \right )\; y_{i}(x, \varphi_{n_{j}}) \dx } \\
& \quad + \abs{ \int_{\Omega} \mathcal{K}_{i}(x, \bm{u}, \nabla \bm{u}, p) \; (y_{i}(x, \varphi_{n_{j}}) - y_{i}(x, \varphi)) \dx} =: I_{1} + I_{2},
\end{align*}
we can show that $\lim_{n \to \infty} G_{i}(\varphi_{n_{j}}, \bm{u}_{n_{j}}, p_{n_{j}}) = G_{i}(\varphi, \bm{u}, p)$ once we demonstrate that $I_{1}, I_{2} \to 0$ as $n \to \infty$.  This would then imply that $\varphi \in \K_{ad}$.  Using the growth conditions in \eqref{assump:Ki} for $\mathcal{K}_{i}$, the strong convergences for $\{(\bm{u}_{n_{j}}, p_{n_{j}})\}_{j \in \N}$ and the generalized Lebesgue dominated convergence theorem yields that 
\begin{align*}
\mathcal{K}_{i}(x, \bm{u}_{n_{j}}, \nabla \bm{u}_{n_{j}}, p_{n_{j}}) \to \mathcal{K}_{i}(x, \bm{u}, \nabla \bm{u}, p) \text{ strongly in } L^{1}(\Omega) \text{ as } j \to \infty.
\end{align*}
Then, the assertion that $I_{1} \to 0$ as $j \to \infty$ follows from the above strong convergence in $L^{1}(\Omega)$ and the boundedness of $y_{i}(x, \varphi_{n_{j}})$ in $L^{\infty}(\Omega)$.  Meanwhile, dominating the sequence $\{\mathcal{K}_{i}(x, \bm{u}, \nabla \bm{u}, p) \; y_{i}(x, \varphi_{n_{j}})\}_{j \in \N}$ by the function $\norm{z_{5}}_{L^{\infty}(\Omega)} M \abs{\mathcal{K}_{i}(x, \bm{u}, \nabla \bm{u}, p)} \in L^{1}(\Omega)$, and the application of the usual Lebesgue dominating convergence theorem yields
\begin{align*}
\lim_{j \to \infty} \int_{\Omega} \mathcal{K}_{i}(x, \bm{u}, \nabla \bm{u}, p) \; y_{i}(x, \varphi_{n_{j}}) \dx = \int_{\Omega} \mathcal{K}_{i}(x, \bm{u}, \nabla \bm{u}, p) \; y_{i}(x, \varphi) \dx,
\end{align*}
and hence $I_{2} \to 0$ as $n \to \infty$.
\qed
\end{proof}

We state the existence result for a minimizer of the problem \eqref{OpProblem}-\eqref{PF:constraint}.
\begin{thm}\label{thm:existence}
Under Assumptions \eqref{assump:Psi}-\eqref{assump:functional}, 
there exists at least one minimizer to the problem \eqref{OpProblem}-\eqref{PF:constraint}.
\end{thm}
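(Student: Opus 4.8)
The plan is to apply the direct method of the calculus of variations. First I would record that the feasible set is non-empty by \eqref{Kad}, so the infimum of $\JJ_{\eps}$ over all admissible triples $(\varphi, \bm{u}, p)$ with $\varphi \in \K_{ad}$ and $(\bm{u},p) = \bm{S}_{\eps}(\varphi)$ is strictly less than $+\infty$. To see that it is finite, I would use that the penalization term $\tfrac{1}{2}\hat{\alpha}_{\eps}(\varphi)\abs{\bm{u}}^{2}$ is non-negative by \eqref{assump:hatalpha}, that the Ginzburg--Landau energy is non-negative by \eqref{assump:Psi}, and that $\BB$ and $\HH$ are bounded from below on $\K_{ad} \times \bH^{1}_{\bm{g},\sigma}(\Omega) \times L^{2}_{0}(\Omega)$ by \eqref{assump:functional}. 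Hence there exists a minimizing sequence $(\varphi_{n}, \bm{u}_{n}, p_{n})$ with $\varphi_{n} \in \K_{ad}$ and $(\bm{u}_{n}, p_{n}) = \bm{S}_{\eps}(\varphi_{n})$, along which $\JJ_{\eps}(\varphi_{n}, \bm{u}_{n}, p_{n})$ is bounded.

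Second, I would derive a priori bounds and extract a limit. Since the objective is bounded along the sequence and all other contributions are bounded below, the gradient part $\tfrac{\gamma}{2c_{0}}\tfrac{\eps}{2}\abs{\nabla \varphi_{n}}^{2}$ of the Ginzburg--Landau term is bounded in $L^{1}(\Omega)$; together with the pointwise bounds $s_{a} \leq \varphi_{n} \leq s_{b}$ coming from $\varphi_{n} \in \Phi$, this yields a uniform bound for $\varphi_{n}$ in $H^{1}(\Omega)$. The states $(\bm{u}_{n}, p_{n})$ are bounded in $\bH^{1}(\Omega) \times L^{2}(\Omega)$ by Lemma~\ref{lem:ExistenceState}. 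Passing to a subsequence, I obtain $\varphi_{n} \rightharpoonup \varphi$ in $H^{1}(\Omega)$, which by compact embedding gives $\varphi_{n} \to \varphi$ strongly in $L^{p}(\Omega)$ and a.e.\ in $\Omega$, so that $\varphi \in \Phi$. Invoking Lemma~\ref{lem:solnopcts} along this subsequence produces $\bm{u}_{n} \to \bm{u}$ strongly in $\bH^{1}(\Omega)$ and $p_{n} \to p$ strongly in $L^{2}(\Omega)$ with $(\bm{u}, p) \in \bm{S}_{\eps}(\varphi)$. The crucial feasibility of the limit, namely $\varphi \in \K_{ad}$, is then exactly the content of Lemma~\ref{lem:Kadweaklyclosed}.

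Finally I would establish lower semicontinuity of $\JJ_{\eps}$ term by term under these convergences. The bulk functional $\BB$ is weakly lower semicontinuous and the surface functional $\HH$ is lower semicontinuous under $\varphi_{n} \rightharpoonup \varphi$ in $H^{1}$, $\bm{u}_{n} \to \bm{u}$ in $\bH^{1}$, $p_{n} \to p$ in $L^{2}$, both by \eqref{assump:functional}. For the Ginzburg--Landau part, the gradient term $\int_{\Omega} \tfrac{\eps}{2}\abs{\nabla \varphi}^{2}\dx$ is weakly lower semicontinuous by convexity, while $\int_{\Omega} \tfrac{1}{\eps}\Psi(\varphi_{n})\dx \to \int_{\Omega}\tfrac{1}{\eps}\Psi(\varphi)\dx$ follows from the strong $L^{p}$ (and a.e.) convergence together with the uniform bound $s_{a}\leq\varphi_{n}\leq s_{b}$ and the continuity of $\Psi$. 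For the penalization term, since $\hat{\alpha}_{\eps}$ satisfies the same hypotheses as $\alpha_{\eps}$ by \eqref{assump:hatalpha}, the argument of Lemma~\ref{lem:solnopcts} applies verbatim to give $\int_{\Omega}\hat{\alpha}_{\eps}(\varphi_{n})\abs{\bm{u}_{n}}^{2}\dx \to \int_{\Omega}\hat{\alpha}_{\eps}(\varphi)\abs{\bm{u}}^{2}\dx$. Combining these, $\JJ_{\eps}(\varphi, \bm{u}, p) \leq \liminf_{n\to\infty}\JJ_{\eps}(\varphi_{n}, \bm{u}_{n}, p_{n})$, and since the right-hand side equals the infimum, $(\varphi, \bm{u}, p)$ is a minimizer. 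I expect the main obstacle to be not any single estimate but the need to reconcile the different modes of convergence so that every (generally non-convex) contribution is simultaneously lower semicontinuous; this is precisely why the strong $\bH^{1}\times L^{2}$ convergence of the states furnished by Lemma~\ref{lem:solnopcts} is indispensable, as weak convergence of $(\bm{u}_{n}, p_{n})$ alone would not suffice to control the surface and penalization terms.
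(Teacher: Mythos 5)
Your proposal is correct and takes essentially the same route as the paper's proof: the direct method, with coercivity of $\JJ_{\eps}$ from the Ginzburg--Landau gradient term together with the pointwise bounds $s_{a} \leq \varphi_{n} \leq s_{b}$, feasibility of the weak limit via Lem.~\ref{lem:Kadweaklyclosed}, strong convergence of the states via Lem.~\ref{lem:solnopcts}, and term-by-term lower semicontinuity using \eqref{assump:functional}, convexity of the gradient term, and dominated convergence for $\Psi$. Your only (cosmetic) deviation is that you justify convergence of the penalization term for $\hat{\alpha}_{\eps}$ explicitly by noting that the argument of Lem.~\ref{lem:solnopcts} applies under \eqref{assump:hatalpha}, whereas the paper cites that lemma directly (it is stated for $\alpha_{\eps}$), which is in fact a slightly more careful reading than the original.
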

\begin{proof}
By \eqref{assump:functional}, $(\BB + \HH) \vert_{\K_{ad} \times \bH^{1}_{\bm{g}, \sigma}(\Omega) \times L^{2}_{0}(\Omega)}$
is bounded from below by a constant $C_{0} \in \R$.  Then, by the non-negativity of $\hat{\alpha}_{\eps}$ and $\Psi$, we find that there exists a constant $C_{1} \in \R$ such that $\JJ_{\eps} : \K_{ad} \times \bH^{1}_{\bm{g}, \sigma}(\Omega) \times L^{2}_{0}(\Omega) \to \R$ is bounded from below by $C_{1}$.  Thus, we can choose a minimizing sequence $(\varphi_{n}, \bm{u}_{n}, p_{n})_{n \in \N} \subset \K_{ad} \times \bH^{1}_{\bm{g},\sigma}(\Omega) \times L^{2}_{0}(\Omega)$ such that $(\bm{u}_{n}, p_{n}) \in \bm{S}_{\eps}(\varphi_{n})$ for all $n \in \N$ and
\begin{align*}
\lim_{n \to \infty} \JJ_{\eps}(\varphi_{n}, \bm{u}_{n}, p_{n}) = \inf_{\varphi \in \K_{ad}, (\bm{u}, p) \in \bm{S}_{\eps}(\varphi)} \JJ_{\eps}(\varphi, \bm{u}, p) \geq C_{1} > -\infty.
\end{align*}
Then, for arbitrary $\eta > 0$, there exists $N \in \N$ such that for $n > N$, 
\begin{align*}
C_{0} + \frac{\gamma \eps}{4 c_{0}} \norm{\nabla \varphi_{n}}_{\bm{L}^{2}(\Omega)}^{2} \leq \JJ_{\eps}(\varphi_{n}, \bm{u}_{n}, p_{n}) \leq \inf_{\varphi \in \K_{ad}, (\bm{u}, p) \in \bm{S}_{\eps}(\varphi)} \JJ_{\eps}(\varphi, \bm{u}, p) + \eta.
\end{align*}
The above estimate implies that $\{\varphi_{n}\}_{n \in \N} \subset \K_{ad}$ 
is bounded uniformly in $H^{1}(\Omega) \cap L^{\infty}(\Omega)$.  
Thus, we may choose a subsequence
$(\varphi_{n_{k}})_{k \in \N}$ such that $\varphi_{n_{k}} \to \varphi$
 strongly in $L^{p}(\Omega)$ and almost everywhere in $\Omega$ for $2 \leq p < \infty$ in
 two-dimensions and $2 \leq p < 6$ in three-dimensions.  Furthermore, by Lem.~\ref{lem:Kadweaklyclosed} we also have that $\varphi \in \K_{ad}$, and by Lem.~\ref{lem:solnopcts}, there is a subsequence $(\bm{u}_{n_{k}}, p_{n_{k}})_{n \in \N} \subset \bH^{1}_{\bm{g},\sigma}(\Omega) \times L^{2}_{0}(\Omega)$ such that
\begin{align*}
\lim_{k \to \infty} \norm{\bm{u}_{n_{k}} - \bm{u}}_{\bH^{1}(\Omega)} = 0, \quad \lim_{k \to \infty} \norm{p_{n_{k}} - p}_{L^{2}(\Omega)} = 0,
\end{align*}
for some $(\bm{u}, p) \in \bm{S}_{\eps}(\varphi)$, and
\begin{align*}
\lim_{k \to \infty} \int_{\Omega} \alpha_{\eps}(\varphi_{n_{k}}) \abs{\bm{u}_{n_{k}}}^{2} \dx = \int_{\Omega} \alpha_{\eps}(\varphi) \abs{\bm{u}}^{2} \dx.
\end{align*}
The continuity of $\Psi$ together with the fact that $(\varphi_{n_{k}})_{k \in \N} \subset L^{\infty}(\Omega)$ implies $(\Psi(\varphi_{n_{k}}))_{k \in \N}$ is a bounded sequence in $L^{\infty}(\Omega)$.  The application of the dominated convergence theorem yields that $\Psi(\varphi_{n_{k}})$ converges strongly to $\Psi(\varphi)$ in $L^{1}(\Omega)$ as $k \to \infty$.  Furthermore, by the weak lower semicontinuity assumptions of $\BB$  and $\HH$, and the weak lower semicontinuity of the mapping $\varphi \mapsto \norm{\nabla \varphi}_{\bm{L}^{2}(\Omega)}^{2}$, we find that
\begin{align*}
\JJ_{\eps}(\varphi, \bm{u}, p) \leq \liminf_{k \to \infty} \JJ_{\eps}(\varphi_{n_{k}}, \bm{u}_{n_{k}}, p_{n_{k}}) = \inf_{\phi \in \K_{ad}, (\bm{v}, q) \in \bm{S}_{\eps}(\phi)} \JJ_{\eps}(\phi, \bm{v}, q),
\end{align*}
and so $(\varphi, \bm{u}, p) \in \K_{ad} \times \bH^{1}_{\bm{g}, \sigma}(\Omega) \times L^{2}_{0}(\Omega)$ is a minimizer of \eqref{OpProblem}-\eqref{PF:constraint}.
\qed
\end{proof}

\AAA From this point onwards, for fixed $\eps > 0$, we denote a minimizer to the optimal control problem \eqref{OpProblem}-\eqref{PF:constraint} as $\varphi_{\eps}$ with corresponding unique solution $(\bm{u}_{\eps}, p_{\eps})$ to the state equation \eqref{NSweak}.
\BBB

\section{Optimality conditions}\label{sec:opt}

We use the notation $\der_{j}f$ to denote the partial derivative of $f$ with respect to its $j$th variable.  Furthermore, the notation $\abs{\der_{(i,j)} f} \leq P$ means that the partial derivatives $\der_{i} f$ and $\der_{j} f$ satisfy $\abs{\der_{i}f} \leq P$ and $\abs{\der_{j}f} \leq P$.  To obtain optimality conditions, we make the following assumptions on the differentiability of $B$, $z$, $h$, $\mathcal{K}_{i}$, $y_{i}$, and $\LL_{i}$.

\begin{enumerate}[label=$(\mathrm{B \arabic*})$, ref = $\mathrm{B \arabic*}$, leftmargin=*]
\item \label{assump:bhdiff} In addition to \eqref{assump:b} assume further that $x \mapsto B(x, \bm{w}, \bm{A}, s)$, $x \mapsto z(x, t)$ and $x \mapsto h(x, \bm{A}, s, \bm{w})$ belong to $W^{1,1}(\Omega)$ for all $\bm{w} \in \R^{d}$, $\bm{A} \in \R^{d \times d}$, $s,t \in \R$, and the partial derivatives 
\begin{align*}
& \der_{2}B(x, \cdot, \bm{A}, s), \; \der_{3}B(x, \bm{w}, \cdot, s), \; \der_{4}B(x, \bm{w}, \bm{A}, \cdot), \; \der_{2} z(x, \cdot), \\
& \der_{2}h(x, \cdot, s, \bm{w}), \; \der_{3}h(x, \bm{A}, \cdot, \bm{w}), \; \der_{4}h(x, \bm{A}, s, \cdot )
\end{align*}
exist for all $\bm{w} \in \R^{d}$, $s \in \R$, $\bm{A} \in \R^{d \times d}$, and a.e. $x \in \Omega$ as Carath\'{e}odory functions with
\begin{align}
\notag \abs{\der_{2} B(x, \bm{w}, \bm{A}, s)} & \leq \tilde{c}(x) + \tilde{b}_{1}(x) \abs{\bm{w}}^{p-1} + \tilde{b}_{2}(x) \abs{\bm{A}} + \tilde{b}_{3}(x) \abs{s}, \\
\notag \abs{\der_{(3,4)} B(x, \bm{w}, \bm{A}, s)} & \leq \tilde{a}(x) + \tilde{b}_{1}(x) \abs{\bm{w}}^{p/2} + \tilde{b}_{2}(x) \abs{\bm{A}} + \tilde{b}_{3}(x) \abs{s},  \\
\notag \abs{\der_{2} z(x, t)} & \leq \tilde{b}_{1}(x), \\
\abs{\der_{(2,3,4,5)} h(x, \bm{A}, s, \bm{w})} & \leq \tilde{a}(x) + \tilde{b}_{1}(x) \abs{\bm{A}} + \tilde{b}_{2}(x) \abs{s} + \tilde{b}_{3}(x) \abs{\bm{w}},\label{equ:partialderivativesh}
\end{align}
for some non-negative functions $\tilde{a} \in L^{2}(\Omega)$, $\tilde{c} \in L^{\frac{p}{p-1}}(\Omega)$, $\tilde{b}_{1}, \tilde{b}_{2}, \tilde{b}_{3} \in L^{\infty}(\Omega)$, where $p \geq 2$ in two dimensions and $p \in [2,6]$ in three dimensions.
\item \label{assump:KiLLidiff} For each $1 \leq i \leq m_{1}+m_{2}$, in addition to \eqref{assump:Ki} assume further that  $x \mapsto \mathcal{K}_{i}(x, \bm{w}, \bm{A}, s)$, $x \mapsto k_{i}(x)$, $x \mapsto y_{i}(x, t)$, and $x \mapsto \LL_{i}(x, \bm{A}, s)$ belong to $W^{1,1}(\Omega)$ for all $\bm{w} \in \R^{d}$, $\bm{A} \in \R^{d \times d}$, $s, t \in \R$ and the partial derivatives
\begin{align*}
& \der_{2} \mathcal{K}_{i}(x, \cdot, \bm{A}, s), \; \der_{3}\mathcal{K}_{i}(x, \bm{w}, \cdot, s), \; \der_{4}\mathcal{K}_{i}(x, \bm{w}, \bm{A}, \cdot), \\
& \der_{2} y(x, \cdot), \; \der_{2} \LL_{1}(x, \cdot, s), \; \der_{3} \LL_{1}(x, \bm{A}, \cdot)
\end{align*}
exist for all $\bm{w} \in \R^{d}$, $s \in \R$, $\bm{A} \in \R^{d \times d}$, and a.e. $x \in \Omega$ as Carath\'{e}odory functions.  Moreover, we assume that 
\begin{align*}
\abs{\der_{2} \mathcal{K}_{i}(x, \bm{w}, \bm{A}, s)} & \leq \tilde{c}(x) + \tilde{b}_{1}(x) \abs{\bm{w}}^{p-1} + \tilde{b}_{2}(x) \abs{\bm{A}} + \tilde{b}_{3}(x) \abs{s}, \\
\abs{\der_{(3,4)} \mathcal{K}_{i}(x, \bm{w}, \bm{A}, s)} & \leq \tilde{a}(x) + \tilde{b}_{1}(x) \abs{\bm{w}}^{p/2} + \tilde{b}_{2}(x) \abs{\bm{A}} + \tilde{b}_{3}(x) \abs{s}, \\
\abs{\der_{(2,3)} \LL_{i}(x, \bm{A}, s)} & \leq \tilde{b}_{1}(x), \\
\abs{\der_{2} y_{i}(x, t)} & \leq \tilde{b}_{1}(x), 
\end{align*}
for some non-negative functions $\tilde{a} \in L^{2}(\Omega)$, $\tilde{c} \in L^{\frac{p}{p-1}}(\Omega)$ and $\tilde{b}_{1}, \tilde{b}_{2}, \tilde{b}_{3} \in L^{\infty}(\Omega)$, where $p \geq 2$ in two dimensions and $p \in [2,6]$ in three dimensions.
\end{enumerate}

Under \eqref{assump:bhdiff} and using \cite[\S 4.3.3]{Troltzsch} or \cite[Thm.~1 and 3]{Goldberg92}, the Nemytskii operators
\begin{align*}
(L^{2}(\Omega))^{d \times d} \ni \bm{A} & \mapsto \der_{2}h( \cdot, \bm{A}, s, \bm{w}) \in L^{2}(\Omega) \quad \forall s \in L^{2}(\Omega), \bm{w} \in (L^{2}(\Omega))^{d}, \\
L^{2}(\Omega) \ni s & \mapsto \der_{3}h( \cdot, \bm{A}, s, \bm{w}) \in L^{2}(\Omega) \quad \forall \bm{A} \in (L^{2}(\Omega))^{d \times d}, \bm{w} \in (L^{2}(\Omega))^{d}, \\
(L^{2}(\Omega))^{d} \ni \bm{w} & \mapsto \der_{4}h( \cdot, \bm{A}, s, \bm{w}) \in L^{2}(\Omega) \quad \forall \bm{A} \in (L^{2}(\Omega))^{d \times d}, s \in L^{2}(\Omega),
\end{align*}
are well-defined and the operator
\begin{align*}
(L^{2}(\Omega))^{d \times d} \times L^{2}(\Omega) \times (L^{2}(\Omega))^{d} \ni (\bm{A}, s, \bm{w}) \mapsto h( \cdot, \bm{A}, s, \bm{w}) \in L^{1}(\Omega)
\end{align*}
is continuously Fr\'{e}chet differentiable (see \cite[Thm. 7]{Goldberg92} or \cite[\S 4.3.3]{Troltzsch} with $p = r = 2$ and $ q = 1$).  Hence, we find that
\begin{align*}
\HH : \left ( H^{1}(\Omega) \cap L^{\infty}(\Omega) \right ) \times \bm{H}^{1}(\Omega) \times L^{2}(\Omega) & \to \R \\
(\varphi, \bm{u}, p) & \mapsto \int_{\Omega} \frac{1}{2} h(x, \nabla \bm{u}, p, \nabla \varphi) \dx
\end{align*}
is continuously Fr\'{e}chet differentiable with derivative at $(\varphi_{\eps}, \bm{u}_{\eps}, p_{\eps})$ in the direction $(\eta, \bm{v}, s)$ given as
\begin{equation}\label{FrechDerivativemathcalH}
\begin{aligned}
& \der \HH (\varphi_{\eps}, \bm{u}_{\eps}, p_{\eps})(\eta, \bm{v}, s) \\
& \quad = \int_{\Omega} \frac{1}{2} (\der_{2}h, \der_{3}h, \der_{4}h)\mid_{(x, \nabla \bm{u}_{\eps}, p_{\eps}, \nabla \varphi_{\eps})} \cdot \, (\nabla \bm{v}, s, \nabla \eta) \dx.
\end{aligned}
\end{equation}
Here we use the notation
\begin{align*}
& (\der_{2}h, \der_{3}h, \der_{4}h)\mid_{(x, \nabla \bm{u}_{\eps}, p_{\eps}, \nabla \varphi_{\eps})} \cdot \, (\nabla \bm{v}, s, \nabla \eta) \\
& \quad := (\der_{2}h) : \nabla \bm{v} + (\der_{3}h) s + (\der_{4}h) \cdot \nabla \eta,
\end{align*}
where the partial derivatives are evaluated at $(x, \nabla \bm{u}_{\eps}, p_{\eps}, \nabla \varphi_{\eps})$.  With a similar argument, the mappings
\begin{align*}
\BB : (H^{1}(\Omega) \cap L^{\infty}(\Omega)) \times \bH^{1}(\Omega) \times L^{2}(\Omega) & \to \R \\
 (\varphi, \bm{u}, p) & \mapsto \int_{\Omega} B(x, \bm{u}, \nabla \bm{u}, p) \; z (x,\varphi) \dx, \\
G_{i} : (H^{1}(\Omega) \cap L^{\infty}(\Omega)) \times \bH^{1}(\Omega) \times L^{2}(\Omega) & \to \R \\
(\varphi, \bm{u}, p) & \mapsto \int_{\Omega} \mathcal{K}_{i}(x, \bm{u}, \nabla \bm{u}, p) \; y_{i}(x, \varphi) \dx \\
& + \int_{\Omega} k_{i}(x) + \frac{1}{2} \nabla \varphi \cdot \LL_{i}(x, \nabla \bm{u}, p) \dx,
\end{align*}
for $1 \leq i \leq m_{1}+m_{2}$, are continuously Fr\'{e}chet differentiable, with derivatives at $(\varphi_{\eps}, \bm{u}_{\eps}, p_{\eps})$ in the direction $(\eta, \bm{v}, s)$ given as
\begin{align}
\notag & \der \BB(\varphi_{\eps}, \bm{u}_{\eps}, p_{\eps}) (\eta, \bm{v}, s) \\
& \quad = \int_{\Omega} z(x, \varphi_{\eps}) \; (\der_{2}B, \der_{3}B, \der_{4}B) \vert_{(x, \bm{u}_{\eps}, \nabla \bm{u}_{\eps}, p_{\eps})} \cdot (\bm{v}, \nabla \bm{v}, s) \dx \label{FDmathcalB}  \\
\notag &  \quad \quad + \int_{\Omega} B(x, \bm{u}_{\eps}, \nabla \bm{u}_{\eps}, p) \; \der_{2}z(x, \varphi_{\eps}) \eta \dx,  \\
\notag & \der G_{i} (\varphi_{\eps}, \bm{u}_{\eps}, p_{\eps})(\eta, \bm{v}, s) \\
\notag & \quad = \int_{\Omega} y_{i}(x, \varphi_{\eps}) \; (\der_{2} \mathcal{K}_{i}, \der_{3} \mathcal{K}_{i}, \der_{4} \mathcal{K}_{i} ) \vert_{(x, \bm{u}_{\eps}, \nabla \bm{u}_{\eps}, p_{\eps})} \cdot (\bm{v}, \nabla \bm{v}, s) \dx \\
 & \quad + \int_{\Omega} \mathcal{K}_{i}(x, \bm{u}_{\eps}, \nabla \bm{u}_{\eps}, p_{\eps}) \; \der_{2} y_{i}(x, \varphi_{\eps}) \eta  \dx \label{FD:Gi} \\
\notag & \quad + \frac{1}{2} \int_{\Omega} \nabla \eta \cdot \LL_{i}(x, \nabla \bm{u}_{\eps}, p_{\eps}) + \nabla \varphi_{\eps} \cdot \left( (\der_{2} \LL_{i}, \der_{3} \LL_{i}) \vert_{(x, \nabla \bm{u}_{\eps}, p_{\eps})} \cdot (\nabla \bm{v}, s) \right ) \dx.
\end{align}

\subsection{Fr\'{e}chet differentiability of the objective functional}\label{sec:FrechetDiffjeps}
Due to the well-posedness of the state equations, we may now write the problem \eqref{OpProblem}-\eqref{PF:constraint} as a minimizing problem for a reduced objective functional defined on an open set in $H^{1}(\Omega) \cap L^{\infty}(\Omega)$ with the help of Lem.~\ref{lem:SolnOpDiff}.  Let $(\e{\varphi}, \e{\bm{u}}, \e{p}) \in \K_{ad} \times \bH^{1}_{\bm{g}, \sigma}(\Omega) \times L^{2}_{0}(\Omega)$ denote a minimizer of \eqref{OpProblem}-\eqref{PF:constraint}, obtained from Thm.~\ref{thm:existence}.  By Lem.~\ref{lem:SolnOpDiff}, there exists a neighborhood $N \subset H^{1}(\Omega) \cap L^{\infty}(\Omega)$ of $\e{\varphi}$ such that for every $\psi \in N$, \eqref{NSweak} is uniquely solvable.  We define the reduced functional $\e{j}: N \to \R$ by
\begin{align*}
\e{j}(\psi) := \e{\JJ}(\psi, \bm{S}_{\eps}(\psi))  \text{ for all } \psi \in N.
\end{align*}
We now show that, as a mapping from $\AAA N \subset \BBB H^{1}(\Omega) \cap L^{\infty}(\Omega) \to \R$, $\e{j}$ is Fr\'{e}chet differentiable at $\e{\varphi}$.  As Lem.~\ref{lem:SolnOpDiff} guarantees the Fr\'{e}chet differentiability of the solution operator $\bm{S}_{\eps}(\e{\varphi})$ as a mapping from $\AAA N \BBB $ to $\bH^{1}(\Omega) \times L^{2}(\Omega)$, we focus on the dependence of $\e{\JJ}$ on the first variable.

\AAA Fix $\varphi \in H^{1}(\Omega)$, \BBB then by \eqref{assump:hatalpha}, $\hat{\alpha}_{\eps}$ and $\hat{\alpha}'_{\eps}$ are uniformly bounded and so
\begin{align*}
L^{6}(\Omega) \ni q \mapsto \hat{\alpha}'_{\eps}(\varphi) q \in L^{6}(\Omega)
\end{align*} 
is a well-defined mapping from $H^{1}(\Omega) \subset L^{6}(\Omega)$ to $L^{6}(\Omega)$.  By \cite[\S 4.3.3]{Troltzsch}, we see that $\hat{\alpha}_{\eps}$ defines a Fr\'{e}chet differentiable Nemytskii operator as a mapping from $L^{6}(\Omega)$ to $L^{3}(\Omega)$.  Meanwhile, \AAA the assumption $\Psi \in C^{1,1}(\R)$ \BBB and \cite[Lem.~4.12]{Troltzsch} imply that $\Psi(\varphi)$ is continuously Fr\'{e}chet differentiable Nemytskii operator as a mapping from $L^{\infty}(\Omega)$ to $L^{\infty}(\Omega)$.  Combined with the Fr\'{e}chet differentiability of the mapping $H^{1}(\Omega) \ni \varphi \mapsto \int_{\Omega} \abs{\nabla \varphi}^{2} \dx$, $\BB$ and $\HH$, we obtain that $\e{j} : N \to \R$ is Fr\'{e}chet differentiable.

\subsection{Existence of Lagrange multipliers}\label{sec:LMexist}

To show the existence of Lagrange multipliers for the integral constraints, we make use of the Zowe--Kurcyusz constraint qualification (ZKCQ), see \cite{ZoweKurcyusz} and \cite[\S 6.1.2]{Troltzsch} for more details.  For this purpose, we introduce the notation
\begin{align*}
\Y & := \R^{m_{1}+m_{2}}, \\
\K & := \left \{ \bm{y} \in Y \, | \, y_{i} = 0, y_{j} \geq 0 \text{ for } 1 \leq i \leq m_{1}, m_{1}+1 \leq j \leq m_{1} + m_{2} \right \} \subset \Y , \\
\AAA \GG_{i}(\varphi) \BBB & \AAA := G_{i}(\varphi, \bm{S}_{\eps}(\varphi)) \text{ for } 1 \leq i \leq m_{1}+m_{2}, \BBB \\
\bm{g}(\varphi) & := \left ( \GG_{1}(\varphi), \dots, \GG_{m_{1}}(\varphi), \GG_{m_{1}+1}(\varphi), \dots,  \GG_{m_{1}+m_{2}}(\varphi) \right ),
\end{align*}
and recall the set
\begin{align*}
\Phi = \{ f \in H^{1}(\Omega) \, | \, s_{a} \leq f \leq s_{b} \text{ a.e. in } \Omega \}.
\end{align*}
Then, $\Phi$ is a closed convex subset of $H^{1}(\Omega)$ and $\K$ is a closed convex cone in $\Y$ with vertex at the origin, i.e., $\delta_{1} \K + \delta_{2} \K \subset \K$ for $\delta_{1}, \delta_{2} > 0$.  In the notation of \cite{ZoweKurcyusz}, we introduce the sets
\begin{align*}
\Phi(\e{\varphi}) & = \left \{ \beta (\varphi - \e{\varphi}) \, | \, \varphi \in \Phi, \beta \geq 0 \right \}, \\
\K(\bm{g}(\e{\varphi})) & = \left \{ \bm{\eta} - \beta \bm{g}(\e{\varphi}) \, | \, \bm{\eta} \in \K, \beta \geq 0\right \}.
\end{align*}
\AAA Fix $1 \leq i \leq m_{1}+m_{2}$ and an arbitrary function $\zeta \in \Phi$.  Convexity of $\Phi$ implies that $\e{\varphi} + t(\zeta - \e{\varphi}) \in \Phi$ for sufficiently small values of $t$.  Then, denoting the linearized state variables associated to $\delta = \zeta - \e{\varphi}$ as $(\linvel, \linp)$ (see Lem.~\ref{lem:SolnOpDiff}), the mapping $\phi \mapsto \GG_{i}(\phi) = G_{i}(\phi, \bm{S}_{\eps}(\phi))$ is continuously Fr\'{e}chet differentiable at $\varphi_{\eps}$ with derivative in the direction $\zeta - \e{\varphi}$ given as (see also \eqref{FD:Gi})
\begin{equation}\label{GGFdiff}
\begin{aligned}
& \der \GG_{i}(\e{\varphi})(\zeta - \e{\varphi}) \\
& \quad = \int_{\Omega} (\der_{2} K_{i}, \der_{3} K_{i}, \der_{4} K_{i}, \der_{5} K_{i}) \cdot (\linvel, \nabla \linvel, \linp, \zeta - \e{\varphi}) \dx \\
& \quad \quad + \frac{1}{2} \int_{\Omega} \nabla (\zeta - \e{\varphi}) \cdot \LL_{i}(x, \nabla \e{\bm{u}}, \e{p}) \dx \\
& \quad \quad + \frac{1}{2} \int_{\Omega} \nabla \e{\varphi} \cdot \left [ (\der_{2} \LL_{i}, \der_{3} \LL_{i}) \vert_{(x, \nabla \e{\bm{u}}, \e{p})} \cdot (\nabla \linvel, \linp) \right ] \dx,
\end{aligned}
\end{equation}
where $\der_{2}K_{i}$, $\der_{3} K_{i}$, $\der_{4} K_{i}$ and $\der_{5} K_{i}$ are evaluated at $(x, \e{\bm{u}}, \nabla \e{\bm{u}}, \e{p}, \e{\varphi})$.  Then, it holds that
\begin{align*}
\bm{g}'(\varphi_{\eps})(\zeta - \e{\varphi}) = (\der \GG_{1}(\varphi_{\eps})(\zeta - \e{\varphi}), \dots, \der \GG_{m_{1}+m_{2}}(\varphi_{\eps})(\zeta - \e{\varphi})).
\end{align*}
\BBB
The existence of bounded Lagrange multipliers $\bm{\lambda} := (\lambda_{1}, \dots, \lambda_{m_{1}+m_{2}}) \in \K^{+} := \{ \bm{y} \in \Y \, | \, \bm{y} \cdot \bm{\eta} = 0 \; \forall \bm{\eta} \in \K \}$ satisfying
\begin{align*}
\bm{\lambda} \cdot  \bm{g}(\e{\varphi}) = 0, \text{ and } \inner{\der \e{j}(\e{\varphi}) \AAA + \BBB \bm{\lambda} \cdot \bm{g}'(\e{\varphi})}{\zeta - \e{\varphi}} \geq 0 \quad \forall \zeta \in \Phi, 
\end{align*}
where $\inner{\cdot}{\cdot}$ denotes the duality pairing between $H^{1}(\Omega)$ and its dual, follows if $\varphi_{\eps}$ is a regular point in the sense of \cite{ZoweKurcyusz}, or equivalently the so-called Zowe--Kurcyusz constraint qualification 
\begin{align}\label{ZoweCondition}
\Y = g'(\e{\varphi}) \Phi(\e{\varphi}) - \K(g(\e{\varphi}))
\end{align}
has to hold.  We now make the following assumption:  
\begin{enumerate}[label=$(\mathrm{C \arabic*})$, ref = $\mathrm{C \arabic*}$, leftmargin=*]
\item \label{assump:ZK} For any $\bm{z} \in \Y = \R^{m_{1}+m_{2}}$, there exists a function $\psi_{*} \in \Phi$, vectors $\bm{\tau} \in \Y$, $\bm{\xi}, \bm{\eta} \in \R^{m_{2}}$ such that $\tau_{i} \geq 0$, $\xi_{j} \geq 0$, $\eta_{j} \geq 0$ for $1 \leq i \leq m_{1}+m_{2}$ and $1 \leq j \leq m_{2}$, and
\begin{equation*}
\begin{alignedat}{3}
z_{i} & = \tau_{i} \der \GG_{i}(\e{\varphi})(\psi_{*} - \e{\varphi}), && \text{ for } 1 \leq i \leq m_{1} \\
z_{m_{1}+j} & = \tau_{m_{1}+j} \der \GG_{m_{1} + j}(\e{\varphi})(\psi_{*} - \e{\varphi}) - \eta_{j} + \xi_{j} \GG_{m_{1}+j}(\e{\varphi}), && \text{ for } 1 \leq j \leq m_{2}.
\end{alignedat}
\end{equation*}
\end{enumerate}
Then, under \eqref{assump:ZK} and using \cite[Thm.~3.1 and 4.1]{ZoweKurcyusz} there exist $\lambda_{1}, \dots, \lambda_{m_{1}} \in \R$ and $\lambda_{m_{1}+1}, \dots, \lambda_{m_{1}+m_{2}} \in \R_{\geq 0}$ such that
\begin{equation}\label{GradientIneq}
\begin{aligned}
\der \e{j}(\e{\varphi})(\zeta - \e{\varphi}) & + \sum_{i=1}^{m_{1}} \lambda_{i} \der \GG_{i}(\e{\varphi})(\zeta - \e{\varphi}) \\
& + \sum_{j=1}^{m_{2}} \lambda_{m_{1}+j} \der \GG_{m_{1}+j}(\e{\varphi})(\zeta - \e{\varphi}) \geq 0 \quad \forall \zeta \in \Phi
\end{aligned}
\end{equation}
holds with the following complementary slackness conditions for the inequality constraints
\begin{align}\label{Slackness}
\lambda_{m_{1}+j} \GG_{m_{1}+j}(\e{\varphi}) = 0 \text{ for } 1 \leq j \leq m_{2}.
\end{align}

We mention that \eqref{ZoweCondition} is equivalent (see \cite[\S 3]{ZoweKurcyusz} and \cite[Thm. 1.56]{HPUU}) to the following interior point/linearized Slater condition (which is also commonly known as the Robinson regularity condition \cite{Robinson}):
\begin{align*}
\bm{0} \in \mathrm{int} \left ( \bm{g}(\e{\varphi}) + \bm{g}'(\e{\varphi})(\Phi - \e{\varphi}) - \K \right ).
\end{align*}

\subsection{Adjoint system}
We now introduce the Lagrangian $\mathbb{L} : (H^{1}(\Omega) \cap L^{\infty}(\Omega)) \times \bH^{1}(\Omega) \times L^{2}(\Omega) \times \bH^{1}_{0}(\Omega) \times L^{2}(\Omega) \to \R$ as
\begin{align*}
& \mathbb{L}(\varphi, \bm{u}, p, \bm{q}, \pi) \\
& \quad := \int_{\Omega} \frac{1}{2} \AAA \hat{\alpha}_{\eps}(\varphi) \BBB \abs{\bm{u}}^{2} + \frac{\gamma}{2c_{0}} \left ( \frac{1}{\eps} \Psi(\varphi) + \frac{\eps}{2} \abs{\nabla \varphi}^{2} \right ) \dx \\
& \quad \quad + \int_{\Omega} b(x, \bm{u}, \nabla \bm{u}, p, \varphi) + \frac{1}{2} h(x, \nabla \bm{u}, p, \nabla \varphi)\dx \\
& \quad \quad - \int_{\Omega} \alpha_{\eps}(\varphi) \bm{u} \cdot \bm{q} + \mu \nabla \bm{u} \cdot \nabla \bm{q} + (\bm{u} \cdot \nabla) \bm{u} \cdot \bm{q} - p \div \bm{q} - \bm{f} \cdot \bm{q} - \pi \div \bm{u}\dx \\
&  \quad \quad + \int_{\Omega} \sum_{i=1}^{m_{1}+m_{2}}\lambda_{i} \left ( K_{i}(x, \bm{u}, \nabla \bm{u}, p, \varphi) + \frac{1}{2} \nabla \varphi \cdot \LL_{i}(x, \nabla \bm{u}, p) \right ) + \theta p \dx
\end{align*}
where $\lambda_{i}$ is the Lagrange multiplier for the integral constraint $\GG_{i}(\varphi)$ and $\theta$ is a Lagrange multiplier for the constraint $\int_{\Omega} p \dx = 0$ for the pressure.  A formal computation of $\der_{\bm{u}} \mathbb{L}$ and $\der_{p} \mathbb{L}$ yields the following adjoint system \AAA for the minimizer $\e{\varphi}$: \BBB
\begin{subequations}\label{AdjointSystem}
\begin{alignat}{3}
\notag \alpha_{\eps} (\e{\varphi}) &\e{\bm{q}}  - \mu \div (\nabla \e{\bm{q}} + (\nabla \e{\bm{q}})^{\top}) + (\nabla \e{\bm{u}})^{\top} \e{\bm{q}} - (\e{\bm{u}} \cdot \nabla) \e{\bm{q}} + \nabla \e{\pi}  \\
\notag & = \AAA \hat{\alpha}_{\eps} \BBB (\e{\varphi}) \e{\bm{u}} + \der_{2}b - \div (\der_{3}b + \tfrac{1}{2} \der_{2}h) \\
 & + \sum_{i=1}^{m_{1}+m_{2}} \left ( \lambda_{i} \der_{2} K_{i} -   \div \left (\lambda_{i} \left ( \der_{3} K_{i} + \tfrac{1}{2} \nabla \e{\varphi} \cdot \der_{2} \LL_{i} \right )\right ) \right )  && \text{ in } \Omega, \\
\div \e{\bm{q}} & = -\der_{4}b - \tfrac{1}{2} \der_{3} h - \theta - \sum_{i=1}^{m_{1}+m_{2}} \left ( \lambda_{i} \der_{4} L_{i} +  \tfrac{1}{2} \lambda_{i} \nabla \e{\varphi} \cdot \der_{3} \LL_{1,i} \right )   && \text{ in } \Omega, \label{Adjoint:div} \\
\e{\bm{q}} & = \bm{0} && \text{ on } \pd \Omega,
\end{alignat}
\end{subequations}
where $\der_{(2,3,4)}b$ are evaluated at $(x, \e{\bm{u}}, \nabla \e{\bm{u}}, \e{p}, \e{\varphi})$, $\der_{(2,3)}h$ are evaluated at $(x, \nabla \e{\bm{u}}, \e{p}, \nabla \e{\varphi})$, $\der_{(2,3,4)}K_{i}$ are evaluated at $(x, \e{\bm{u}}, \nabla \e{\bm{u}}, \e{p}, \e{\varphi})$, and $\der_{(2,3)} \LL_{i}$ are evaluated at $(x, \nabla \e{\bm{u}}, \e{p})$, and upon integrating the divergence equation for $\e{\bm{q}}$, we obtain 
\begin{equation}
\label{LM:theta}
\begin{aligned}
\theta & = \frac{1}{\abs{\Omega}} \int_{\Omega} -\der_{4}b - \tfrac{1}{2} \der_{3}h - \sum_{i=1}^{m_{1}+m_{2}} \left ( \lambda_{i} \der_{4} K_{i} + \tfrac{1}{2} \lambda_{i}  \nabla \e{\varphi} \cdot \der_{3} \LL_{i} \right ) \dx.
\end{aligned}
\end{equation}
Let us also recall from \eqref{b:specificform} and \eqref{Ki:specialform} that
\begin{align*}
\der_{(2,3,4)} b(x, \bm{u}, \nabla \bm{u}, p, \varphi) & = z(x, \varphi) \; \der_{(2,3,4)} B(x, \bm{u}, \nabla \bm{u}, p), \\
\der_{(2,3,4)} K_{i}(x, \bm{u}, \nabla \bm{u}, p, \varphi) & = y_{i}(x, \varphi) \; \der_{(2,3,4)} \mathcal{K}_{i}(x, \bm{u}, \nabla \bm{u}, p).
\end{align*}
We now show that the adjoint system is well-posed.
\begin{lemma}
Let $\e{\varphi} \in H^{1}(\Omega) \cap L^{\infty}(\Omega)$ \AAA be the minimizer obtained from Thm.~\ref{thm:existence} and $(\e{\bm{u}}, \e{p}) = \bm{S}_{\eps}(\e{\varphi})$.  Furthermore, let $\{\lambda_{i}\}_{i=1}^{m_{1}+m_{2}}$ be the Lagrange multipliers associated to the integral state constraints $\{\GG_{i}(\e{\varphi})\}_{i=1}^{m_{1}+m_{2}}$. \BBB  Then, under \eqref{assump:bhdiff}, \eqref{assump:KiLLidiff} and \eqref{assump:ZK}, there exists a unique weak solution pair $(\e{\bm{q}}, \e{\pi}) \in \bH^{1}_{0}(\Omega) \times L^{2}(\Omega)$ to the adjoint system \eqref{AdjointSystem} \AAA in the following sense 
\begin{equation}\label{weak:adjoint}
\begin{aligned}
& \int_{\Omega} \alpha_{\eps}(\varphi_{\eps}) \e{\bm{q}} \cdot \bm{v} + \mu ( \nabla \e{\bm{q}} + (\nabla \e{\bm{q}})^{\top}) \cdot \nabla \bm{v} + (\nabla \e{\bm{u}})^{\top} \e{\bm{q}} \cdot \bm{v} - (\e{\bm{u}} \cdot \nabla) \e{\bm{q}} \cdot \bm{v} \dx \\
& \quad = \int_{\Omega} \hat{\alpha}_{\eps}(\e{\varphi}) \e{\bm{u}} \cdot \bm{v} + (\der_{3} b + \tfrac{1}{2} \der_{2}h) \cdot \nabla \bm{v} + \der_{2} b \cdot \bm{v} \dx \\
& \quad \quad + \int_{\Omega} \sum_{i=1}^{m_{1}+m_{2}} \lambda_{i} (\der_{2} K_{i} \cdot \bm{v} + (\tfrac{1}{2} \nabla \e{\varphi} \cdot \der_{2} \LL_{i} + \der_{3} K_{i}) \cdot \nabla \bm{v}) \dx 
\end{aligned}
\end{equation}
for all $\bm{v} \in \bH^{1}_{0,\sigma}(\Omega)$.
\BBB
\end{lemma}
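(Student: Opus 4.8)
The plan is to treat the adjoint system \eqref{AdjointSystem} as a linear Oseen-type problem: I would first eliminate the adjoint pressure by restricting to divergence-free test functions, solve the resulting variational problem for $\e{\bm{q}}$ by the Lax--Milgram lemma, and then recover $\e{\pi}$ via de Rham's theorem. To begin, observe that the divergence equation \eqref{Adjoint:div} prescribes $\div\e{\bm{q}} = g^{*}$, where $g^{*}$ is its right-hand side; the choice of $\theta$ in \eqref{LM:theta} is exactly what forces $\int_{\Omega} g^{*}\dx = 0$, and the growth bounds in \eqref{assump:bhdiff}--\eqref{assump:KiLLidiff}, together with $\e{\varphi}\in L^{\infty}(\Omega)$ and $\nabla\e{\varphi},\nabla\e{\bm{u}},\e{p}\in L^{2}(\Omega)$, show $g^{*}\in L^{2}_{0}(\Omega)$. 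Using the surjectivity of $\div:\bH^{1}_{0}(\Omega)\to L^{2}_{0}(\Omega)$ (the inf--sup condition for the Stokes problem), I fix a lift $\bm{q}_{d}\in\bH^{1}_{0}(\Omega)$ with $\div\bm{q}_{d}=g^{*}$ and seek $\e{\bm{q}}=\bm{q}_{0}+\bm{q}_{d}$ with $\bm{q}_{0}\in\bH^{1}_{0,\sigma}(\Omega)$. Substituting into \eqref{weak:adjoint} and using $\div\bm{v}=0$, the problem reduces to finding $\bm{q}_{0}\in\bH^{1}_{0,\sigma}(\Omega)$ with $\mathcal{A}(\bm{q}_{0},\bm{v})=\mathcal{F}(\bm{v})-\mathcal{A}(\bm{q}_{d},\bm{v})$ for all $\bm{v}\in\bH^{1}_{0,\sigma}(\Omega)$, where $\mathcal{A}$ and $\mathcal{F}$ denote the bilinear form and linear functional on the two sides of \eqref{weak:adjoint}.

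Next I would verify the hypotheses of Lax--Milgram on $\bH^{1}_{0,\sigma}(\Omega)$. Continuity of $\mathcal{A}$ is routine: the zeroth-order term is bounded by $\norm{\alpha_{\eps}}_{L^{\infty}(\Omega)}$, the symmetric-gradient term by $2\mu$, and the two convective terms by the trilinear estimate associated with $K_{\Omega}$ in \eqref{defn:KOmega} with $\e{\bm{u}}$ held fixed. Continuity of $\mathcal{F}$ follows because the growth conditions place the zeroth-order coefficients $\hat{\alpha}_{\eps}(\e{\varphi})\e{\bm{u}}$, $\der_{2}b$ and $\lambda_{i}\der_{2}K_{i}$ in $\bm{L}^{p/(p-1)}(\Omega)$ and the gradient coefficients $\der_{3}b+\tfrac{1}{2}\der_{2}h$ and $\tfrac{1}{2}\nabla\e{\varphi}\cdot\der_{2}\LL_{i}+\der_{3}K_{i}$ in $\bm{L}^{2}(\Omega)$, so that $\mathcal{F}$ is a bounded functional on $\bH^{1}_{0}(\Omega)$. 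The decisive step is coercivity. Testing with $\bm{v}=\bm{q}_{0}$, the term $\int_{\Omega}\alpha_{\eps}(\e{\varphi})\abs{\bm{q}_{0}}^{2}\dx$ is non-negative; an integration by parts for $\bm{q}_{0}\in\bH^{1}_{0}(\Omega)$ gives $\int_{\Omega}(\nabla\bm{q}_{0}+(\nabla\bm{q}_{0})^{\top}):\nabla\bm{q}_{0}\dx=\norm{\nabla\bm{q}_{0}}_{\bm{L}^{2}(\Omega)}^{2}+\norm{\div\bm{q}_{0}}_{L^{2}(\Omega)}^{2}\geq\norm{\nabla\bm{q}_{0}}_{\bm{L}^{2}(\Omega)}^{2}$; and the term $-\int_{\Omega}(\e{\bm{u}}\cdot\nabla)\bm{q}_{0}\cdot\bm{q}_{0}\dx$ vanishes since $\div\e{\bm{u}}=0$ and $\bm{q}_{0}$ has zero trace. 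The remaining term equals $\int_{\Omega}(\bm{q}_{0}\cdot\nabla)\e{\bm{u}}\cdot\bm{q}_{0}\dx$, whose modulus is bounded by $K_{\Omega}\norm{\nabla\e{\bm{u}}}_{\bm{L}^{2}(\Omega)}\norm{\nabla\bm{q}_{0}}_{\bm{L}^{2}(\Omega)}^{2}$, so that
\[
\mathcal{A}(\bm{q}_{0},\bm{q}_{0})\geq\left(\mu-K_{\Omega}\norm{\nabla\e{\bm{u}}}_{\bm{L}^{2}(\Omega)}\right)\norm{\nabla\bm{q}_{0}}_{\bm{L}^{2}(\Omega)}^{2}.
\]
By the standing assumption \eqref{uniqcond} the prefactor is strictly positive, and with the Poincaré inequality $\mathcal{A}$ is coercive on $\bH^{1}_{0,\sigma}(\Omega)$; Lax--Milgram then yields a unique $\bm{q}_{0}$, hence a unique $\e{\bm{q}}=\bm{q}_{0}+\bm{q}_{d}$.

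Finally I would recover the pressure. The map $\bm{v}\mapsto\mathcal{A}(\e{\bm{q}},\bm{v})-\mathcal{F}(\bm{v})$ is a bounded linear functional on $\bH^{1}_{0}(\Omega)$ that vanishes on $\bH^{1}_{0,\sigma}(\Omega)$, so by de Rham's theorem (equivalently the closed-range property of $\div$) there is a unique $\e{\pi}\in L^{2}_{0}(\Omega)$ realising the missing pressure term, which completes the full weak adjoint system \eqref{AdjointSystem}; uniqueness of $\e{\bm{q}}$ comes from coercivity and that of $\e{\pi}$ from the inf--sup condition. The one genuine obstacle is the coercivity estimate above: the term $\int_{\Omega}(\bm{q}_{0}\cdot\nabla)\e{\bm{u}}\cdot\bm{q}_{0}\dx$ coming from $(\nabla\e{\bm{u}})^{\top}\e{\bm{q}}$ is not sign-definite, and it is precisely the smallness condition \eqref{uniqcond}---the same hypothesis guaranteeing uniqueness of the state and solvability of the linearized system in Lem.~\ref{lem:SolnOpDiff}---that renders the adjoint operator invertible. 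Everything else is standard linear functional analysis.
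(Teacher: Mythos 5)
Your proposal is correct and follows essentially the same route as the paper: decompose $\e{\bm{q}}$ into a divergence lift (the paper constructs $\bm{G}$ via \cite[Lem.~II.2.1.1]{Sohr}, you via the inf--sup/surjectivity of $\div$) plus a solenoidal part obtained by Lax--Milgram on $\bH^{1}_{0,\sigma}(\Omega)$, with coercivity resting on the smallness condition \eqref{uniqcond}, and the adjoint pressure recovered afterwards by de Rham/closed-range arguments as in \cite[Lem.~II.2.2.1]{Sohr}. The only difference is that you carry out explicitly the coercivity computation (sign of the $\alpha_{\eps}$ term, vanishing of the transport term, the trilinear bound with $K_{\Omega}$) that the paper delegates to \cite[Lem.~4.1 and Proof of Lem.~4.9]{GHHKL}.
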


\begin{proof}
\AAA For convenience, we use the notation
\begin{equation}\label{divqeps}
\begin{aligned}
g := -\der_{4}b - \tfrac{1}{2} \der_{3}h - \theta - \sum_{i=1}^{m_{1}+m_{2}} \lambda_{i} \left ( \der_{4} K_{i} + \tfrac{1}{2} \nabla \e{\varphi} \cdot \der_{3} \LL_{i} \right ),
\end{aligned}
\end{equation}
so that \eqref{Adjoint:div} reads as $\div \e{\bm{q}} = g$ in $\Omega$.
\BBB
Then, by \eqref{assump:bhdiff}, \eqref{assump:KiLLidiff} and \eqref{LM:theta}, we see that \AAA $g$ belongs to the function space $L^{2}_{0}(\Omega)$. \BBB

Applying \cite[Lem.~II.2.1.1]{Sohr}, we find a vector field $\bm{G} \in \bH^{1}_{0}(\Omega)$ such that
\begin{align*}
\div \bm{G} = g \text{ in } \Omega, \text{ and } \norm{\nabla \bm{G}}_{\bm{L}^{2}(\Omega)} \leq C \norm{g}_{L^{2}(\Omega)}
\end{align*}
for some constant $C > 0$ depending only on $\Omega$.  We define the bilinear form $a: \bH^{1}_{0,\sigma}(\Omega) \times \bH^{1}_{0,\sigma}(\Omega) \to (\bH^{1}_{0,\sigma}(\Omega))'$ by
\begin{equation}
\begin{aligned}
a(\bm{z}, \bm{v}) & := \int_{\Omega} \alpha_{\eps}(\e{\varphi}) \bm{z} \cdot \bm{v} + \mu \AAA (\nabla \bm{z} + (\nabla \bm{z})^{\top}) \BBB \cdot \nabla \bm{v} \dx \\
& \quad \quad + \int_{\Omega} (\nabla \e{\bm{u}})^{\top} \bm{z} \cdot \bm{v} - (\e{\bm{u}} \cdot \nabla) \bm{z} \cdot \bm{v} \dx.
\end{aligned}
\end{equation}
Using \eqref{uniqcond}, Poincar\'{e}'s inequality, H\"{o}lder's inequality, the boundedness of $\alpha_{\eps}$ and properties of the trilinear form $b(\bm{u}, \bm{v}, \bm{w}) := \int_{\Omega} (\bm{u} \cdot \nabla) \bm{v} \cdot \bm{w} \dx$ (see \cite[Lem.~4.1]{GHHKL}), it \AAA can be shown similar to \BBB \cite[Proof of Lem.~4.9]{GHHKL} that $a(\cdot, \cdot)$ is a bounded and coercive bilinear form.  Furthermore, defining
\begin{equation}\label{AdjointRHS}
\begin{aligned}
\bm{F}(\bm{v}) & := \int_{\Omega} \AAA \hat{\alpha}_{\eps} \BBB(\e{\varphi}) \e{\bm{u}} \cdot \bm{v} + (\der_{3}b + \tfrac{1}{2} \der_{2}h) \cdot \nabla \bm{v} + \der_{2} b \cdot \bm{v} \dx  \\
& \quad + \int_{\Omega} \sum_{i=1}^{m_{1}+m_{2}}  \lambda_{i} \left ( \der_{2}K_{i} \cdot \bm{v} + \left ( \tfrac{1}{2} \nabla \e{\varphi} \cdot \der_{2} \LL_{i} +  \der_{3} K_{i} \right ) \cdot \nabla \bm{v} \right ) \dx \\
& \AAA  \quad - \int_{\Omega} \alpha_{\eps}(\varphi_{\eps}) \bm{G} \cdot \bm{v} + \mu ( \nabla \bm{G} + (\nabla \bm{G})^{\top}) \cdot \bm{v} \dx \BBB \\
& \AAA \quad - \int_{\Omega} (\nabla \bm{u}_{\eps})^{\top} \bm{G} \cdot \bm{v} - (\bm{u}_{\eps} \cdot \nabla) \bm{G} \cdot \bm{v} \dx \BBB,
\end{aligned}
\end{equation}
\AAA and applying \eqref{assump:bhdiff}, \eqref{assump:KiLLidiff}, the fact that $\bm{G}, \bm{u}_{\eps} \in \bH^{1}(\Omega)$ and Sobolev embeddings leads to the deduction that \BBB $\bm{F}(\bm{v})$ is a bounded linear form on $\bH^{1}_{0,\sigma}(\Omega)$.  Thus, by the Lax--Milgram theorem, we obtain a unique $\hat{\bm{q}} \in \bH^{1}_{0,\sigma}(\Omega)$ such that
\begin{align*}
a(\hat{\bm{q}}, \bm{v}) =  \bm{F}(\bm{v}).  
\end{align*}
This implies that the solution $\e{\bm{q}} := \hat{\bm{q}} + \bm{G} \in \bH^{1}_{0}(\Omega)$ satisfies the weak formulation \eqref{weak:adjoint} with 
\begin{align*}
\div \e{\bm{q}} = \div \bm{G} = g.
\end{align*}
The existence of a unique adjoint pressure $\e{\pi} \in L^{2}(\Omega)$ follows from standard results, see for instance \cite[Lem.~II.2.2.1]{Sohr}.  Thus $(\e{\bm{q}}, \e{\pi})$ is the unique weak solution to the adjoint system \eqref{AdjointSystem}.
\end{proof}

\subsection{Necessary optimality conditions}
Now we can formulate the first order necessary optimality conditions for our optimal control problem.
\begin{thm}\label{thm:Optimality}
Let $\e{\varphi} \in \K_{ad}$ be a minimizer of \eqref{OpProblem}-\eqref{PF:constraint} with corresponding (unique) state variables $(\e{\bm{u}}, \e{p}) = \bm{S}_{\eps}(\e{\varphi})$, $\e{\bm{u}} \in \bH^{1}_{\bm{\sigma},\sigma}(\Omega)$, $\e{p} \in L^{2}_{0}(\Omega)$.  \AAA Furthermore, let $\{\lambda_{i}\}_{i=1}^{m_{1}+m_{2}}$ be the Lagrange multipliers associated to the integral state constraints $\{\GG_{i}(\e{\varphi})\}_{i=1}^{m_{1}+m_{2}}$, and $(\e{q}, \e{\pi})$ be the unique solution to the adjoint system \eqref{AdjointSystem}. \BBB  Then, under \eqref{assump:bhdiff}, \eqref{assump:KiLLidiff} and \eqref{assump:ZK}, the following optimality system is fulfilled:
\begin{equation}\label{FONC}
\begin{aligned}
0 & \leq \biginner{ \frac{1}{2} \AAA \hat{\alpha}_{\eps}'(\e{\varphi}) \BBB \abs{\e{\bm{u}}}^{2} - \alpha_{\eps}'(\e{\varphi}) \e{\bm{u}} \cdot \e{\bm{q}} + \frac{\gamma}{2c_{0} \eps} \Psi'(\e{\varphi})}{ \zeta - \e{\varphi}} \\
& \quad  + \biginner{\der_{5}b  + \sum_{i=1}^{m_{1}+m_{2}} \lambda_{i} \der_{5} K_{i}}{\zeta - \e{\varphi}}_{L^{2}(\Omega)} \\
& \quad + \biginner{\frac{\gamma \eps}{2c_{0}} \nabla \e{\varphi} + \frac{1}{2} \der_{4} h + \frac{1}{2} \sum_{i=1}^{m_{1}+m_{2}} \lambda_{i} \LL_{i}}{\nabla (\zeta - \e{\varphi})}_{\bm{L}^{2}(\Omega)} \quad \forall \zeta \in \Phi,
\end{aligned}
\end{equation}
where $\der_{4}h$ is evaluated at $(x, \nabla \e{\bm{u}}, \e{p}, \nabla \e{\varphi})$, $\LL_{i}$ is evaluated at $(x, \nabla \e{\bm{u}}, \e{p})$, and $\der_{5}b = B(x, \e{\bm{u}}, \nabla \e{\bm{u}}, \e{p}) \; \der_{2} z(x, \e{\varphi})$, $\der_{5} K_{i} = \mathcal{K}_{i}(x, \e{\bm{u}}, \nabla \e{\bm{u}}, \e{p}) \; \der_{2} y_{i}(x, \e{\varphi})$.
\end{thm}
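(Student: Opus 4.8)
The plan is to begin from the gradient inequality \eqref{GradientIneq} furnished by the Zowe--Kurcyusz framework and to eliminate the linearized state through the adjoint system, so that only terms explicitly paired with $\zeta - \e{\varphi}$ and $\nabla(\zeta - \e{\varphi})$ survive. Throughout I write $(\linvel, \linp) = \der \bm{S}_{\eps}(\e{\varphi})(\zeta - \e{\varphi}) \in \bH^{1}_{0}(\Omega) \times L^{2}_{0}(\Omega)$ for the linearized state associated to the admissible direction $\zeta - \e{\varphi}$, as provided by Lem.~\ref{lem:SolnOpDiff}; crucially $\div \linvel = 0$, so $\linvel \in \bH^{1}_{0,\sigma}(\Omega)$ and may be used as a test function in \eqref{weak:adjoint}, while $\e{\bm{q}} \in \bH^{1}_{0}(\Omega)$ may be used as a test function in \eqref{LinearizedStateSys}.

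First I would expand $\der \e{j}(\e{\varphi})(\zeta - \e{\varphi})$ by the chain rule, using the Fr\'echet differentiability established in Sec.~\ref{sec:FrechetDiffjeps} together with the formulas \eqref{FDmathcalB} and \eqref{FrechDerivativemathcalH}. This separates the derivative into \emph{direct} contributions carrying $\zeta - \e{\varphi}$ or $\nabla(\zeta - \e{\varphi})$ -- namely $\tfrac{1}{2}\hat{\alpha}_{\eps}'(\e{\varphi})\abs{\e{\bm{u}}}^{2}$, $\der_{5}b$, $\tfrac{1}{2}\der_{4}h$, and the Ginzburg--Landau terms $\tfrac{\gamma}{2c_{0}}(\tfrac{1}{\eps}\Psi'(\e{\varphi})(\zeta-\e{\varphi}) + \eps \nabla \e{\varphi}\cdot\nabla(\zeta-\e{\varphi}))$ -- and \emph{state-coupled} contributions carrying $(\linvel,\linp)$, namely $\hat{\alpha}_{\eps}(\e{\varphi})\e{\bm{u}}\cdot\linvel$ together with the $(\der_{2}b,\der_{3}b,\der_{4}b)$ and $(\der_{2}h,\der_{3}h)$ terms tested against $(\linvel,\nabla\linvel,\linp)$. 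Applying the same splitting to $\sum_{i}\lambda_{i}\der\GG_{i}(\e{\varphi})(\zeta-\e{\varphi})$ via \eqref{GGFdiff} yields the direct contributions $\sum_{i}\lambda_{i}\der_{5}K_{i}$ and $\tfrac{1}{2}\sum_{i}\lambda_{i}\LL_{i}$, and state-coupled contributions collecting the $(\der_{2}K_{i},\der_{3}K_{i},\der_{4}K_{i})$ and $(\der_{2}\LL_{i},\der_{3}\LL_{i})$ terms.

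The key step is the adjoint elimination. I would test the linearized system \eqref{LinearizedStateSys} with $\bm{v}=\e{\bm{q}}$ and the adjoint identity \eqref{weak:adjoint} with $\bm{v}=\linvel$, then compare. The viscous terms agree because $\int_{\Omega}(\nabla\e{\bm{q}})^{\top}\cdot\nabla\linvel \dx = \int_{\Omega}(\div\e{\bm{q}})(\div\linvel)\dx = 0$ for the divergence-free field $\linvel$; the convective terms agree by the pointwise identity $(\linvel\cdot\nabla)\e{\bm{u}}\cdot\e{\bm{q}} = (\nabla\e{\bm{u}})^{\top}\e{\bm{q}}\cdot\linvel$ and the skew-symmetry $\int_{\Omega}(\e{\bm{u}}\cdot\nabla)\linvel\cdot\e{\bm{q}}\dx = -\int_{\Omega}(\e{\bm{u}}\cdot\nabla)\e{\bm{q}}\cdot\linvel\dx$, valid for $\div\e{\bm{u}}=0$ since $\linvel$ and $\e{\bm{q}}$ vanish on $\pd\Omega$ (see \cite[Lem.~4.1]{GHHKL}); and the pressure term $\int_{\Omega}\linp\div\e{\bm{q}}\dx = \int_{\Omega}\linp\, g\dx$ is handled using $\div\e{\bm{q}}=g$ from \eqref{Adjoint:div}, the definition \eqref{divqeps} of $g$, and $\linp\in L^{2}_{0}(\Omega)$, so that the constant multiplier $\theta$ drops out. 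Consequently the entire block of state-coupled terms in the expansion of $\der\e{j}(\e{\varphi}) + \sum_{i}\lambda_{i}\der\GG_{i}(\e{\varphi})$ matches, term for term, the right-hand side of \eqref{weak:adjoint} evaluated at $\bm{v}=\linvel$, and upon subtracting the two tested identities this block collapses to the single term $-\int_{\Omega}\alpha_{\eps}'(\e{\varphi})(\zeta-\e{\varphi})\,\e{\bm{u}}\cdot\e{\bm{q}}\dx$, the contribution produced by the inhomogeneity $\alpha_{\eps}'(\e{\varphi})(\zeta-\e{\varphi})\e{\bm{u}}$ in \eqref{LinearizedStateSys}.

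Substituting this back into \eqref{GradientIneq} and collecting the surviving direct contributions then yields exactly \eqref{FONC}. I expect the main obstacle to be the bookkeeping in the elimination step: one must verify that every state-coupled term arising in $\der\e{j}$ and in the $\lambda_{i}\der\GG_{i}$ corresponds precisely to a term in the adjoint right-hand side, so that the two tested identities cancel cleanly. This matching holds by construction of \eqref{AdjointSystem}, but it demands care with the symmetric-gradient and convective terms and with the role of the pressure multiplier $\theta$; the integration-by-parts identities above are exactly what make the cancellation exact.
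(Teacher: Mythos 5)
Your proposal is correct and follows essentially the same route as the paper's proof: the paper likewise starts from the gradient inequality \eqref{GradientIneq}, expands $\der \e{j}(\e{\varphi})(\zeta - \e{\varphi})$ via the chain rule (its \eqref{compute:Djeps}), tests \eqref{LinearizedStateSys} with $\e{\bm{q}}$ and \eqref{weak:adjoint} with $\linvel$, and invokes exactly your three identities (vanishing of $\int_{\Omega} (\nabla \e{\bm{q}})^{\top} \cdot \nabla \linvel \dx$ for divergence-free $\linvel$, skew-symmetry of the convective term, and $\linp \in L^{2}_{0}(\Omega)$ killing $\theta$) so that the state-coupled block collapses to $-\int_{\Omega} \alpha_{\eps}'(\e{\varphi})(\zeta - \e{\varphi})\, \e{\bm{u}} \cdot \e{\bm{q}} \dx$ before substituting back together with \eqref{GGFdiff} to obtain \eqref{FONC}. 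The only cosmetic difference is your justification of the transpose-gradient cancellation through $\int_{\Omega} (\div \e{\bm{q}})(\div \linvel) \dx$, where the paper simply cites \cite[(4.29)]{GHHKL}; both are the same standard integration-by-parts fact.
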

\begin{proof}
In Sec.~\ref{sec:FrechetDiffjeps} we have \AAA shown \BBB that the reduced functional 
\begin{align*}
\e{j}(\e{\varphi}) := \e{\JJ}(\e{\varphi}, \bm{S}_{\eps}(\e{\varphi}))
\end{align*}
is Fr\'{e}chet differentiable with respect to $\e{\varphi}$, and in Sec.~\ref{sec:LMexist} we derived the gradient equation \eqref{GradientIneq}.  We now want to rewrite \eqref{GradientIneq} into a more convenient form using the adjoint system.  \AAA For $\zeta \in \Phi$, let $(\linvel, \linp)$ denote the unique solution to the linearized state equations \eqref{LinearizedStateSys} corresponding to $\delta = \zeta - \e{\varphi}$.  Then, computing the derivative of $\e{j}$ at $\e{\varphi}$ in the direction $\delta$ leads to \BBB 
\begin{equation}\label{compute:Djeps}
\begin{aligned}
& \der \e{j}(\e{\varphi})(\zeta - \e{\varphi}) \\
& \quad = \int_{\Omega} \frac{1}{2} \AAA \hat{\alpha}_{\eps}' \BBB (\e{\varphi}) (\zeta - \e{\varphi}) \abs{\e{\bm{u}}}^{2} + \AAA \hat{\alpha}_{\eps} \BBB (\e{\varphi}) \e{\bm{u}} \cdot \linvel \dx \\
& \quad \quad + \int_{\Omega} \frac{\gamma}{2 c_{0}} \left ( \frac{1}{\eps} \Psi'(\e{\varphi}) (\zeta - \e{\varphi}) + \eps \nabla \e{\varphi} \cdot \nabla (\zeta - \e{\varphi}) \right ) \dx \\
& \quad \quad + \int_{\Omega} ( \der_{2}b, \der_{3}b, \der_{4}b, \der_{5}b) \cdot (\linvel, \nabla \linvel, \linp, \zeta - \e{\varphi}) \dx \\
& \quad \quad + \int_{\Omega} \frac{1}{2} (\der_{2}h, \der_{3}h, \der_{4}h)  \cdot (\nabla \linvel, \linp, \nabla (\zeta - \e{\varphi})) \dx,
\end{aligned}
\end{equation}
where in the above and for the rest of the proof $\{ \der_{i} b \}_{i=2}^{5}$ are evaluated at $(x, \e{\bm{u}}, \nabla \e{\bm{u}}, \e{p}, \e{\varphi})$ and $\{\der_{i}h\}_{i=2}^{4}$ are evaluated at $(x, \nabla \e{\bm{u}}, \e{p}, \nabla \e{\varphi})$.  Using the adjoint state $\e{\bm{q}}$ as a test function in \eqref{LinearizedStateSys} (with $\delta = \zeta - \e{\varphi}$) leads to
\begin{equation}\label{LinearizedState:test:qeps}
\begin{aligned}
0 & = \int_{\Omega} \alpha_{\eps}'(\e{\varphi}) (\zeta - \e{\varphi}) \e{\bm{u}} \cdot \e{\bm{q}} + \alpha_{\eps}(\e{\varphi}) \linvel \cdot \e{\bm{q}} + \mu \nabla \linvel \cdot \nabla \e{\bm{q}} \dx \\
& \quad + \int_{\Omega} (\linvel \cdot \nabla ) \e{\bm{u}} \cdot \e{\bm{q}} + (\e{\bm{u}} \cdot \nabla ) \linvel \cdot \e{\bm{q}} - \linp g \dx,
\end{aligned}
\end{equation}
where $g = \div \e{q}$ as in \eqref{divqeps}.  Using the linearized state $\linvel$ as a test function in the adjoint system \eqref{weak:adjoint} leads to
\begin{equation}\label{Adjoint:test:linearized}
\begin{aligned}
& \int_{\Omega} \alpha_{\eps}(\e{\varphi}) \e{\bm{q}} \cdot \linvel + \mu \nabla \e{\bm{q}} \cdot \nabla \linvel + (\nabla \e{\bm{u}})^{\top} \e{\bm{q}} \cdot \linvel - (\e{\bm{u}} \cdot \nabla) \e{\bm{q}} \cdot \linvel \dx \\
& \quad = \int_{\Omega} \AAA \hat{\alpha}(\e{\varphi}) \BBB \e{\bm{u}} \cdot \linvel + (\der_{3}b + \tfrac{1}{2} \der_{2}h) \cdot \nabla \linvel + \der_{2}b \cdot \linvel \dx \\
& \quad \quad + \int_{\Omega} \sum_{i=1}^{m_{1}+m_{2}} \lambda_{i} ( \der_{2} K_{i} \cdot \linvel + (\tfrac{1}{2} \nabla \e{\varphi} \cdot \der_{2} \LL_{i} + \der_{3} K_{i}) \cdot \nabla \linvel) \dx,
\end{aligned}
\end{equation}
where we have used $\linvel \in \bH^{1}_{0,\sigma}(\Omega)$ to deduce that $\int_{\Omega} (\nabla \e{q})^{\top} \cdot \nabla \linvel \dx = 0$ (see for instance \cite[(4.29)]{GHHKL}).  Upon comparing terms in \eqref{LinearizedState:test:qeps} and \eqref{Adjoint:test:linearized} we find that
\begin{equation}\label{OpSys:simplify:1}
\begin{aligned}
& \int_{\Omega} \hat{\alpha}(\e{\varphi}) \e{\bm{u}} \cdot \linvel + (\der_{3}b + \tfrac{1}{2} \der_{2}h) \cdot \nabla \linvel + \der_{2}b \cdot \linvel + (\e{\bm{u}} \cdot \nabla) \e{\bm{q}} \cdot \linvel \dx \\
& \quad \quad + \int_{\Omega} \sum_{i=1}^{m_{1}+m_{2}} \lambda_{i} ( \der_{2} K_{i} \cdot \linvel + (\tfrac{1}{2} \nabla \e{\varphi} \cdot \der_{2} \LL_{i} + \der_{3} K_{i}) \cdot \nabla \linvel) \dx \\
& \quad = \int_{\Omega} \alpha_{\eps}(\e{\varphi}) \e{\bm{q}} \cdot \linvel + \mu \nabla \e{q} \cdot \nabla \linvel + (\linvel \cdot \nabla ) \e{\bm{u}} \cdot \e{\bm{q}} \dx \\
&  \quad = \int_{\Omega} \linp g - \alpha_{\eps}'(\e{\varphi}) (\zeta - \e{\varphi}) \e{\bm{u}} \cdot \e{\bm{q}} - (\e{\bm{u}} \cdot \nabla ) \linvel \cdot \e{\bm{q}} 
\end{aligned}
\end{equation}
Using that $\linp \in L^{2}_{0}(\Omega)$, $\div \e{\bm{u}} = 0$ in $\Omega$, $\e{\bm{q}} = \linvel = \bm{0}$ on $\pd \Omega$, and thus
\begin{align*}
\int_{\Omega} \linp \theta \dx & = \theta \int_{\Omega} \linp \dx = 0, \\
 \int_{\Omega} (\e{\bm{u}} \cdot \nabla) \e{\bm{q}} \cdot \linvel + (\e{\bm{u}} \cdot \nabla) \linvel \cdot \e{\bm{q}} \dx & = \int_{\Omega} \e{\bm{u}} \cdot \nabla (\e{\bm{q}} \cdot \linvel) \dx = 0,
\end{align*}
we can simplify \eqref{OpSys:simplify:1} into
\begin{align*}
& \int_{\Omega} \linp \left ( -\der_{4}b - \tfrac{1}{2} \der_{3}h  - \sum_{i=1}^{m_{1}+m_{2}} \lambda_{i} \left ( \der_{4}K_{i} + \tfrac{1}{2} \nabla \e{\varphi} \cdot \der_{3} \LL_{i} \right ) \right ) \dx \\
& \quad \quad - \int_{\Omega} \alpha_{\eps}'(\e{\varphi}) (\zeta - \e{\varphi}) \e{\bm{u}} \cdot \e{\bm{q}} \dx \\
& \quad = \int_{\Omega} \AAA \hat{\alpha}_{\eps} \BBB (\e{\varphi}) \e{\bm{u}} \cdot \linvel + \left (\der_{3}b + \tfrac{1}{2} \der_{2}h \right ) \cdot \nabla \linvel + \der_{2}b \cdot \linvel \dx \\
& \quad \quad + \int_{\Omega} \sum_{i=1}^{m_{1}+m_{2}} \lambda_{i} \left ( \der_{2}K_{i} \cdot \bm{u} + \left ( \tfrac{1}{2} \nabla \e{\varphi} \cdot \der_{2} \LL_{i} + \der_{3} K_{i} \right ) \cdot \nabla \linvel \right ) \dx,
\end{align*}
and upon rearranging we obtain
\begin{equation}\label{OptSys:simplify:2}
\begin{aligned}
& \int_{\Omega} \AAA \hat{\alpha}_{\eps} \BBB (\e{\varphi}) \e{\bm{u}} \cdot \linvel + (\der_{2}b, \der_{3}b, \der_{4}b) \cdot (\linvel, \nabla \linvel, \linp) \dx \\
& \quad \quad + \int_{\Omega} \tfrac{1}{2}( \der_{2}h, \der_{3}h) \cdot (\nabla \linvel, \linp) \dx \\
& \quad = \int_{\Omega} - \alpha_{\eps}'(\e{\varphi}) (\zeta - \e{\varphi}) \e{\bm{u}} \cdot \e{\bm{q}} \dx \\
& \quad \quad  - \int_{\Omega} \sum_{i=1}^{m_{1}+m_{2}} \lambda_{i} (\der_{2}K_{i}, \der_{3}K_{i},  \der_{4}K_{i}) \cdot (\linvel, \nabla \linvel,  \linp) \dx \\
& \quad \quad - \int_{\Omega} \sum_{i=1}^{m_{1}+m_{2}} \lambda_{i} \tfrac{1}{2} \nabla \e{\varphi} \cdot (\der_{2} \LL_{i}, \der_{3} \LL_{i}) \cdot (\nabla \linvel, \linp) \dx.
\end{aligned}
\end{equation}
Substituting \eqref{OptSys:simplify:2} into \eqref{compute:Djeps}, we obtain
\begin{equation}
\begin{aligned}
& \der \e{j}(\e{\varphi})(\zeta - \e{\varphi}) \\
& \quad  = \int_{\Omega}  \left ( \frac{1}{2} \AAA \hat{\alpha}_{\eps}' \BBB (\e{\varphi}) \abs{\e{\bm{u}}}^{2} - \alpha_{\eps}'(\e{\varphi}) \e{\bm{u}} \cdot \e{\bm{q}} + \frac{\gamma}{2 c_{0} \eps} \Psi'(\e{\varphi})  \right ) (\zeta - \e{\varphi})  \dx \\
& \quad \quad + \int_{\Omega}  \der_{5} b \, (\zeta - \e{\varphi}) +  \left ( \frac{\gamma}{2 c_{0}}  \eps \nabla \e{\varphi} + \tfrac{1}{2} \der_{4}h \right ) \cdot \nabla (\zeta - \e{\varphi})\dx \\
& \quad \quad - \int_{\Omega} \sum_{i=1}^{m_{1}+m_{2}} \lambda_{i} (\der_{2}K_{i}, \der_{3}K_{i}, \der_{4}K_{i}) \cdot (\linvel, \nabla \linvel,  \linp) \dx \\
& \quad  \quad - \int_{\Omega} \sum_{i=1}^{m_{1}+m_{2}} \lambda_{i}  \tfrac{1}{2} \nabla \e{\varphi} \cdot (\der_{2} \LL_{i}, \der_{3} \LL_{i}) \cdot (\nabla \linvel, \linp) \dx.
\end{aligned}
\end{equation}
Together with the gradient equation \eqref{GradientIneq} and the distributional derivatives \eqref{GGFdiff}, we then obtain \eqref{FONC}.
\end{proof}

\begin{remark}
In the case where there is only a volume constraint, i.e., $m_{1} + m_{2} = 1$ with $\GG(\varphi) := \int_{\Omega} \varphi - \beta \dx$ for a fixed constant $\beta \in (-1,1)$, the existence of Lagrange multipliers using the Zowe--Kurcyusz constraint qualification has been shown in \cite[Proof of Thm.~7.1]{HechtThesis} (for the case of inequality constraint), see also \cite[Proof of Thm.~3]{GarckeHechtStokes} for another argument using geometric variations.  For the case of equality constraint, we refer to \cite[Proof of Thm.~4.10]{GHHKL} which is based on a different argument.
\end{remark}

\section{Verification of constraint qualification}\label{sec:VerifyZK}

In this section, we consider a model problem of minimizing the drag subject to constraints on the mass, center of mass and volume of the object.  More precisely, in a bounded domain $\Omega \subset \R^{2}$ with Lipschitz boundary, we study the following optimal control problem
\begin{align*}
\min_{(\varphi, \bm{u}, p)} \int_{\Omega} \frac{1}{2} \bm{a} \cdot \left ( \mu (\nabla \bm{u} + (\nabla \bm{u})^{\top}) - p \id \right ) \nabla \varphi + \frac{\gamma}{2 c_{0}} \left ( \frac{1}{\eps} \Psi(\varphi) + \frac{\eps}{2} \abs{\nabla \varphi}^{2} \right ) \dx 
\end{align*}
subject to $(\varphi, \bm{u}, p)$ solving the porous-medium Navier--Stokes equations \eqref{NSweak} and the following integral constraints:
\begin{align*}
\GG_{1}(\varphi) & = \int_{\Omega} \tfrac{1}{2}(1-\varphi) x_{1} \dx = 0, \\
\GG_{2}(\varphi) & = \int_{\Omega} \tfrac{1}{2}(1-\varphi) x_{2} \dx = 0, \\
\GG_{3}(\varphi) & = M -  \int_{\Omega} \tfrac{1}{2}\rho(x)(1-\varphi) \dx \geq 0, \\
\GG_{4}(\varphi) & = \int_{\Omega} \varphi - \beta \dx \geq 0,
\end{align*}
where $\bm{a}$ is a constant unit vector parallel to the flow direction $\flow$, $M > 0$ is a given positive constant representing an upper bound on the mass of the object, $\rho(x) \in L^{\infty}(\Omega)$ is a non-negative mass density, and $\beta \in (-1,1)$ so that the object is constraint to occupy a maximal volume of $\frac{1-\beta}{2} \abs{\Omega}$.  

The constraints $\GG_{1}(\varphi) = 0$ and $\GG_{2}(\varphi) = 0$ imply that the centre of mass for the object is located at the origin in $\R^{2}$ (which we can assume to hold without loss of generality by translating the domain $\Omega$).  We point out that one can also consider more general surface objective functionals $h$ that are one-homogeneous with respect to the last variable, as well as volume objective functionals $b$, however we consider this particular example of drag minimization as a practical application of our present approach.  Furthermore, in this example we have chosen to neglect the penalization term $\hat{\alpha}_{\eps} \abs{\bm{u}}^{2}$ in the objective functional.

It is straightforward to check that the function
\begin{align*}
h(x, \nabla \bm{u}, p, \nabla \varphi) := \nabla \varphi \cdot \left ( \mu (\nabla \bm{u} + (\nabla \bm{u})^{\top}) - p \id \right ) \bm{a}
\end{align*}
fulfils \eqref{assump:h} by the application of the Young's inequality.  Furthermore, it is shown in \cite[Proof of Thm.~4.1 and Rmk.~4.2]{GHHKL} that the functional 
\begin{align*}
\HH(\varphi, \bm{u}, p) := \int_{\Omega} \nabla \varphi \cdot \left ( \mu (\nabla \bm{u} + (\nabla \bm{u})^{\top}) - p \id \right ) \bm{a} \dx
\end{align*}
is bounded from below for $\varphi \in H^{1}(\Omega) \cap L^{\infty}(\Omega)$ with $s_{a} \leq \varphi \leq s_{b}$ a.e. in $\Omega$, $\bm{u} \in \bH^{1}_{\bm{g}, \sigma}(\Omega)$ and $p \in L^{2}_{0}(\Omega)$, and satisfies due to the product of weak-strong convergence:
\begin{align*}
\lim_{n \to \infty} \HH(\varphi_{n}, \bm{u}_{n}, p_{n}) = \HH(\varphi, \bm{u}, p)
\end{align*}
for sequences $\varphi_{n} \rightharpoonup \varphi$ in $H^{1}(\Omega)$, $\bm{u}_{n} \to \bm{u}$ in $\bH^{1}(\Omega)$ and $p_{n} \to p$ in $L^{2}(\Omega)$.  Hence, \eqref{assump:functional} is also fulfilled.  A short computation shows that
\begin{align*}
\der_{2}h = \mu (\nabla \varphi \otimes \bm{a} + \bm{a} \otimes \nabla \varphi), \; \der_{3} h = - a \cdot \nabla \varphi, \; \der_{4}h = (\mu (\nabla \bm{u} + (\nabla \bm{u})^{\top}) - p \id) \bm{a},
\end{align*}
and as $\bm{a}$ is a constant vector, one can infer that \eqref{assump:bhdiff} (specifically \eqref{equ:partialderivativesh}) is also fulfilled.  Then, it remains to verify \eqref{assump:Ki}, \eqref{assump:LLi}, \eqref{assump:KiLLidiff} and \eqref{assump:ZK} for the existence of Lagrange multipliers for the integral constraints $\GG_{1}, \dots, \GG_{4}$, and show that the admissible set $\K_{ad}$ is non-empty.

For the latter, \AAA note that we have the trivial example $\phi \equiv 1 \in \K_{ad}$ which corresponds to the case where there is no object in the domain $\Omega$.  In the following we will construct a non-trivial example in order to rule out the possibility where $\K_{ad} = \{ 1 \}$, which would imply the solution to the shape optimization problem is to have no object at all. \BBB We can always choose a function $\phi \in H^{1}(\Omega)$, $-1 \leq \phi \leq 1$ a.e. in $\Omega$ such that
\begin{align*}
\abs{\Omega} \beta < \int_{\Omega} \phi \dx, \quad \int_{\Omega} \tfrac{1}{2}(1-\phi) x_{i} \dx = 0 \text{ for } i = 1,2,
\end{align*}
which is equivalent to choosing an object $\{ \phi = -1 \}$ with its centre of mass at the origin with volume bounded above by $\tfrac{1-\beta}{2} \abs{\Omega}$.  Note that the mapping $\phi \mapsto \int_{\Omega} \frac{1}{2} \rho(x)(1-\phi) \dx$ is continuous, and thus we can always decrease the volume of the object region $\{\phi = -1 \}$ to ensure the mass is bounded from above by the constant $M$.  This ensures that $\phi \in \K_{ad}$ and hence \eqref{Kad} is satisfied.  

As $\Omega$ is a bounded domain, the functions $x_{1}$, $x_{2}$ are bounded.  Then, upon setting
\begin{equation*}
\begin{alignedat}{5}
& \mathcal{K}_{1} = x_{1}, \quad && y_{1} = \tfrac{1}{2}(1-\varphi),  \quad && k_{1} = 0,  \quad && \LL_{1}  = \bm{0}, \\
& \mathcal{K}_{2} = x_{2},  \quad && y_{2} = \tfrac{1}{2}(1-\varphi),  \quad && k_{2} = 0,  \quad && \LL_{2} = \bm{0}, \\
& \mathcal{K}_{3} = - \rho(x),  \quad && y_{3}  = \tfrac{1}{2}(1-\varphi), \quad && k_{3}  = M \abs{\Omega}^{-1}, \quad && \LL_{3}  = \bm{0}, \\
& \mathcal{K}_{4}  = 1, \quad && y_{4} = \varphi, \quad && k_{4} = - \beta, \quad && \LL_{4}  = \bm{0},
\end{alignedat}
\end{equation*}
we observe that \eqref{assump:Ki}, \eqref{assump:LLi} and \eqref{assump:KiLLidiff} are fulfilled by the above choices.  Then, by Thm.~\ref{thm:existence} we are guaranteed the existence of a minimizer $\e{\varphi}$ to the optimal control problem.  To verify the main assumption \eqref{assump:ZK} and derive the optimality conditions, we have to show that for an arbitrary $z = (z_{1}, z_{2}, z_{3}, z_{4})^{\top} \in \Y = \R^{4}$, there exists one function $\psi_{*} \in \Phi$, along with non-negative constants $\tau_{1}, \dots, \tau_{4}, \xi_{1}, \xi_{2}, \eta_{1}, \eta_{2}$ such that the following four conditions are fulfilled simultaneously:
\begin{subequations}
\begin{alignat}{3}
2z_{1} & = \tau_{1} \int_{\Omega} (\e{\varphi} - \psi_{*}) x_{1} \dx, \quad 2z_{2} = \tau_{2} \int_{\Omega} (\e{\varphi} - \psi_{*}) x_{2} \dx, \label{Constraint1:G1G2} \\
z_{3} & = \tau_{3} \int_{\Omega} \tfrac{1}{2} \rho(x) (\psi_{*} - \e{\varphi}) \dx - \eta_{1} + \xi_{1} \left ( M -  \int_{\Omega} \tfrac{1}{2} \rho(x) (1-\e{\varphi}) \dx \right ), \label{Constraint1:G3} \\
z_{4} & = \tau_{4} \int_{\Omega} \psi_{*} -  \e{\varphi} \dx - \eta_{2} + \xi_{2} \left ( \int_{\Omega} \e{\varphi} - \beta \dx \right ). \label{Constraint1:G4}
\end{alignat}
\end{subequations}
Due to their nature as equality constraints, we can use the fact that $\GG_{1}(\e{\varphi}) = \GG_{2}(\e{\varphi}) = 0$ to simplify \eqref{Constraint1:G1G2} into
\begin{align}
\label{Constraint1:G1G2:sim}
2 z_{i} = \tau_{i} \int_{\Omega} (1-\psi_{*}) x_{i} \dx \text{ for } i = 1,2.
\end{align}

We first argue for \eqref{Constraint1:G1G2:sim}.  As the origin $\bm{0} \notin \pd \Omega$, this implies that $\Omega$ has non-empty intersections with the four quadrants of $\R^{2}$, which we denote by $Q_{1} = \{x_{1},x_{2} > 0\}$, $Q_{2} = \{x_{1} < 0 ,x_{2} > 0 \}$, $Q_{3} = \{ x_{1}, x_{2} < 0 \}$ and $Q_{4} = \{ x_{1} > 0, x_{2} < 0\}$.  If $z_{1}$ (resp. $z_{2}$) is zero, we choose $\tau_{1}$ (resp. $\tau_{2}$) to be zero.  Thus, it is sufficient to focus on the case where $z_{1}$ and $z_{2}$ are non-zero, and in this case we consider a function $\psi_{*} \in \Phi$ not identically equal to $1$ with $\beta < \psi_{*} \leq 1$ a.e. in $\Omega$ such that the non-empty set $A := \supp{1-\psi_{*}}$ has Lebesgue measure
\begin{align}\label{A:Leb}
\abs{A} < \frac{2M}{(1-\beta) \norm{\rho}_{L^{\infty}(\Omega)}}
\end{align} 
and satisfies
\begin{align*}
A \subset \subset Q_{i} \cap \Omega \text{ if } (z_{1},z_{2}) \in Q_{i} \text{ for } i = 1,2,3,4.
\end{align*}
Then, we set
\begin{align*}
\tau_{i} = \frac{2 z_{i}}{\int_{\Omega} (1-\psi_{*}) x_{i} \dx},
\end{align*}
and thanks to the fact that $\psi_{*} \leq 1$ a.e. in $\Omega$, the function $1-\psi_{*}$ is non-negative in $\Omega$ and only positive in $A$.  The location of $A$ implies that the integrand $(1-\psi_{*}) x_{i}$ has the same sign as $z_{i}$ for $i = 1,2$, and so $\tau_{i}$ is positive for $i = 1,2$.  The condition on the Lebesgue measure of $A$ is used to satisfy the mass constraint.

For the inequality constraint, we have to show that the same function $\psi_{*}$ considered above simultaneously satisfies \eqref{Constraint1:G3} and \eqref{Constraint1:G4}.  We argue for the mass constraint, and the volume constraint follows along a similar argument.  There are two cases to consider: suppose the inequality constraint $\GG_{3}(\e{\varphi})$ is not active for the minimizer $\e{\varphi}$, i.e., $\e{\varphi}$ satisfies $\int_{\Omega} \frac{1}{2} \rho(x) (1-\e{\varphi}) \dx < M$.  Then, we can choose $\tau_{3} = 0$ and it holds that
\begin{align*}
\left \{ -\eta_{1} + \xi_{1} \left ( M - \int_{\Omega} \tfrac{1}{2} \rho(x) (1-\e{\varphi}) \dx \right ) \, | \, \eta_{1}, \xi_{1} \geq 0 \right \} = \R.
\end{align*}
Hence, we have fulfilled \eqref{Constraint1:G3} without making use of the function $\psi_{*}$.  On the other hand, if $\GG_{3}(\e{\varphi})$ is active, i.e., $\int_{\Omega} \frac{1}{2} \rho(x) (1-\e{\varphi}) \dx = M$, the condition \eqref{Constraint1:G3} simplifies to 
\begin{align*}
z_{3} = \tau_{3} \left ( M + \int_{\Omega} \tfrac{1}{2} \rho(x) (\psi_{*} - 1) \dx \right ) - k_{1}.
\end{align*}
A short calculation using \eqref{A:Leb} shows that the quantity in the bracket is positive, and so
\begin{align*}
\left \{ \tau_{3} \left ( M + \int_{\Omega} \tfrac{1}{2} \rho(x) (\psi_{*} - 1) \dx \right ) - \eta_{1}  \, | \, \eta_{1}, \tau_{1} \geq 0  \right \} = \R,
\end{align*}
which implies that \eqref{Constraint1:G3} is fulfilled.  Indeed, we see that 
\begin{align*}
M - \int_{\Omega} \tfrac{1}{2} \rho(x) (1 - \psi_{*}) \dx & = M - \int_{A} \tfrac{1}{2} \rho(x) (1-\psi_{*}) \dx \\
& \geq M - \tfrac{1}{2} \norm{\rho}_{L^{\infty}(\Omega)} (1-\beta) \abs{A} > 0.
\end{align*}
For the volume constraint \eqref{Constraint1:G4} we again divide the argument into two cases: if $\GG_{4}(\e{\varphi})$ is inactive, then \eqref{Constraint1:G4} holds automatically without the use of the function $\psi_{*}$, and if $\GG_{4}(\e{\varphi})$ is active, then using $\psi_{*} > \beta$ yields the desired result.

As a consequence, \eqref{assump:ZK} is fulfilled and we obtain the existence of Lagrange multipliers $\lambda_{1}, \lambda_{2} \in \R$, $\lambda_{3}, \lambda_{4} \in \R_{\geq 0}$.  By Thm.~\ref{thm:Optimality} the first order optimality condition is
\begin{align*}
0 & \leq  \biginner{\frac{\gamma \eps}{2c_{0}} \nabla \e{\varphi} + \frac{1}{2} \left ( \mu \left ( \nabla \bm{u}_{\eps} + \left ( \nabla \bm{u}_{\eps} \right )^{\top} \right ) - p_{\eps} \id \right ) \bm{a} }{\nabla (\zeta - \e{\varphi})}_{\bm{L}^{2}(\Omega)} \\
& + \biginner{ -\alpha_{\eps}'(\e{\varphi}) \e{\bm{u}} \cdot \e{\bm{q}} + \frac{\gamma}{2c_{0} \eps} \Psi'(\e{\varphi})}{\zeta - \e{\varphi}}_{L^{2}(\Omega)} \\
& + \biginner{-\tfrac{1}{2}\lambda_{1}x_{1} - \tfrac{1}{2} \lambda_{2} x_{2} -\tfrac{1}{2} \lambda_{3} \rho(x) + \lambda_{4}}{\zeta - \e{\varphi}}_{L^{2}(\Omega)} \quad \forall \zeta \in \Phi,
\end{align*}
together with the complementary slackness conditions
\begin{align*}
\lambda_{3} \left ( M - \int_{\Omega} \tfrac{1}{2} \rho(x) (1-\e{\varphi}) \dx \right ) = 0, \quad \lambda_{4} \left ( \int_{\Omega} \e{\varphi} - \beta \dx \right ) = 0.
\end{align*}

\AAA
\begin{remark}
We point out that the mass constraint $\GG_{3}(\varphi) = M - \int_{\Omega} \tfrac{1}{2}\rho(x) (1-\varphi) \dx \geq 0$ can also be thought of as a constraint on a construction cost, where the value $\rho(x) > 0$ represents the cost of building the object at the point $x \in \Omega$, and $M$ denotes a maximal cost.  
\end{remark}
\BBB

\AAA
Let us now consider a similar model problem but with the single integral constraint on the total potential power \eqref{PotPower}.  More precisely, in a bounded domain $\Omega \subset \R^{2}$ with Lipschitz boundary, we study the following optimal control problem
\begin{align*}
\min_{(\varphi, \bm{u}, p)} \int_{\Omega} \frac{1}{2} \bm{a} \cdot \left ( \mu (\nabla \bm{u} + (\nabla \bm{u})^{\top}) - p \id \right ) \nabla \varphi + \frac{\gamma}{2 c_{0}} \left ( \frac{1}{\eps} \Psi(\varphi) + \frac{\eps}{2} \abs{\nabla \varphi}^{2} \right ) \dx ,
\end{align*}
subject to $(\varphi, \bm{u}, p)$ solving the porous-medium Navier--Stokes equations \eqref{NSweak} (with zero body force $\bm{f} = \bm{0}$) and the following integral constraint:
\begin{align*}
G(\varphi, \bm{u}) & = \int_{\Omega} D \abs{\Omega}^{-1} - \tfrac{1}{2}(1+\varphi) \frac{\mu}{2} \abs{\nabla \bm{u}}^{2} \dx \geq 0,
\end{align*}
where $\bm{a} = - \flow^{\perp}$ is the negative unit vector perpendicular to the flow direction $\flow$ and $D > 0$ is a given positive constant representing an upper bound on the total potential power.  Then, upon setting
\begin{align*}
\mathcal{K} = \frac{\mu}{2} \abs{\nabla \bm{u}}^{2} , \quad y = -\tfrac{1}{2}(1+\varphi), \quad k = D \abs{\Omega}^{-1}, \quad \LL = \bm{0},
\end{align*}
we see that \eqref{assump:Ki}, \eqref{assump:LLi} and  \eqref{assump:KiLLidiff} are fulfilled.  In this setting we observe that the trivial example $\varphi \equiv -1$ belongs to the admissible set of design functions $\K_{ad}$.  A non-trivial example can be found if the domain $\Omega$ is sufficiently large or the viscosity $\mu$ is sufficiently small.  Indeed, let $\varphi$ be a function in $H^{1}(\Omega)$ with $-1 \leq \varphi \leq 1$ a.e. in $\Omega$ but not identically equal to $1$ or $-1$.  Denote by $\bm{u}$ the unique velocity field associated to the state equation \eqref{PF:state}, then by \eqref{uniqcond} it holds that
\begin{align}\label{Pot:Power:Kad}
\int_{\Omega} \frac{1+\varphi}{2} \frac{\mu}{2} \abs{\nabla \bm{u}}^{2} \dx \leq \frac{\mu}{2} \norm{\nabla \bm{u}}_{\bm{L}^{2}(\Omega)}^{2} < \frac{\mu^{3}}{2 K_{\Omega}^{2}},
\end{align} 
where from \eqref{defn:KOmega} the constant $K_{\Omega}$ is $K_{\Omega} = \frac{1}{2} \abs{\Omega}^{\frac{1}{2}}$ in two dimensions.  Note that the above upper bound is independent of $\varphi$.  For any fixed positive constant $D$, we can take a sufficiently large domain $\Omega$ or sufficiently small viscosity $\mu$, so that $\frac{\mu^{3}}{2 K_{\Omega}^{2}} \leq D$.  Then, this implies that $\K_{ad}$ is non-empty and thus \eqref{Kad} is fulfilled.  Furthermore, following the arguments above, we deduce by Thm.~\ref{thm:existence} that there exists at least one minimizer $\e{\varphi}$ to the optimal control problem.  Writing $\GG(\e{\varphi}) = G(\e{\varphi}, \bm{S}_{\eps}(\e{\varphi}))$, to verify the assumption \eqref{assump:ZK}, we have to show for an arbitrary $z \in \R$, there exists one function $\psi_{*} \in \Phi$ along with non-negative constants $\tau, \xi, \eta$ such that
\begin{equation}\label{ZK:PotPow}
\begin{aligned}
z = \tau \der \GG(\e{\varphi})(\psi_{*} - \e{\varphi}) - \eta + \xi \GG(\e{\varphi}).
\end{aligned}
\end{equation}
Observe that for this particular setting (with a large domain $\Omega$ or small viscosity $\mu$), the inequality $D \geq \frac{\mu^{3}}{2 K_{\Omega^{2}}}$ holds independently of the minimizer $\e{\varphi}$, and thus by \eqref{Pot:Power:Kad} $\GG(\e{\varphi}) > 0$ holds.  In particular, the constraint is always inactive, and we do not need to find the function $\psi_{*}$ as
\begin{align*}
\left \{ \xi \GG(\e{\varphi}) - \eta \, | \, \eta, \xi \geq 0 \right \} = \R.
\end{align*}
Furthermore, as the constraint is always inactive the complementary slackness condition \eqref{Slackness} implies that the associated Lagrange multiplier $\lambda$ is zero.  Hence,  by Thm.~\ref{thm:Optimality} the first order optimality condition is
\begin{align*}
0 & \leq  \biginner{\frac{\gamma \eps}{2c_{0}} \nabla \e{\varphi} + \frac{1}{2} \left ( \mu \left ( \nabla \bm{u}_{\eps} + \left ( \nabla \bm{u}_{\eps} \right )^{\top} \right ) - p_{\eps} \id \right ) \bm{a} }{\nabla (\zeta - \e{\varphi})}_{\bm{L}^{2}(\Omega)} \\
& + \biginner{ -\alpha_{\eps}'(\e{\varphi}) \e{\bm{u}} \cdot \e{\bm{q}} + \frac{\gamma}{2c_{0} \eps} \Psi'(\e{\varphi})}{\zeta - \e{\varphi}}_{L^{2}(\Omega)} \quad \forall \zeta \in \Phi.
\end{align*}
\BBB


\section{Numerical implementation and simulations}\label{sec:numerics}
Let us now describe how we can use the above results to compute optimal shapes and topologies in given flow
settings.  Since our optimization variable is a phase field, and thus has the natural regularity $\varphi \in H^{1}(\Omega) \cap L^{\infty}(\Omega)$, we use the variable metric projection type (VMPT) method proposed in \cite{Blank_Rupprecht} to solve the resulting minimization problems.  A standard projected gradient method can not be used for the constraint minimization problem due to the fact that $H^{1}(\Omega) \cap L^{\infty}(\Omega)$ is not a Hilbert space.  \AAA The VMPT method uses derivative information which can be represented with the help of the adjoint variables as specified in \eqref{AdjointSystem}.
\BBB

For the potential function $\Psi$ we use the double-obstacle free energy, namely
\begin{align}\label{defn:Obstacle}
\Psi(\e{\varphi}) = \begin{cases}
\frac{1}{2}(1-\e{\varphi}^2) & \text{ if } \abs{\e{\varphi}} \leq 1,\\
\infty & \text{ else}.
\end{cases}
\end{align}
From this we obtain the constraint $\abs{\e{\varphi}} \leq 1$,
and $c_{0} = \frac{\pi}{2}$, where $c_{0}$ is the constant defined in \eqref{defn:c0}. 
Although the double-obstacle potential \eqref{defn:Obstacle} does not satisfy
\eqref{assump:Psi}, the analysis is not affected once we choose $s_{a} = -1$ and $s_{b} = 1$, so that $\abs{\e{\varphi}} \leq 1$ and the potential becomes $\Psi(\e{\varphi}) = \frac{1}{2}(1-\e{\varphi}^{2})$.  
We refer the reader to \cite{GarckeHechtNS,GHHKNumerics} which also uses 
the double-obstacle potential \eqref{defn:Obstacle}.  
For the porous-medium term $\e{\alpha}(\e{\varphi})$ in the state equations \eqref{PF:NS} we choose
\begin{align}\label{num:alpha}
\e{\alpha}(\e{\varphi}) = \frac{\overline{\alpha}}{2 \eps}(1-\e{\varphi}),
\end{align}
with a fixed positive constant $\overline{\alpha}$, and \eqref{assump:alpha} is fulfilled with $s_{a} = -1$ and $s_{b} = 1$.  \AAA We choose
\begin{align*}
\hat{\alpha}_{\eps} \equiv \alpha_{\eps}
\end{align*}
so that \eqref{assump:hatalpha} is also satisfied.  For the remaining part of this section, we denote both variables by $\alpha_{\eps}$, set $\bm{f} = \bm{0}$ in \eqref{PF:state}, and define
\begin{align*}
\Phi = \{ f \in H^{1}(\Omega) \, | \, -1 \leq f \leq 1 \text{ a.e. in } \Omega \}.
\end{align*} \BBB

\subsection{Spatial discretization}
We use finite elements for the numerical discretization of the minimization problem.  
We use piecewise linear and globally continuous finite elements for the representation 
of $\e{\varphi}$, $\e{p}$ and $\e{\pi}$ and piecewise quadratic and globally continuous finite elements for $\e{\bm{u}}$ and $\e{\bm q}$ on a conforming triangulation of the domain $\Omega$.

\AAA 
It is well-known that in phase field applications the variable $\e{\varphi}$ changes rapidly across the interfacial layers, and an adaptive concept for its spatial resolution is indispensable.  Hence, for the mesh generation we use the Dual Weighted Residual (DWR) method \cite{Becker} where our implementation is guided by \cite{HHKK}.  This generates adaptive meshes which well resolve the interfacial regions, and also well reflect the underlying flow physics, compare also \cite{HHK}. \BBB  The DWR approach is only applicable if for a given triangulation an optimal solution is already found and uses this information to calculate error indicators.

\AAA
For fast calculations, it is desirable to use coarse meshes.  In the core of the VMPT method we solve projection-type problems using a
primal-dual-active-set strategy (PDAS).  Here the active set corresponds to degrees of freedom with $\abs{\e{\varphi}} = 1$.  Thus in every step of the PDAS we solve the problems on the inactive set $\abs{\e{\varphi}} < 1$ only.  Note that the integral constraints have to be fulfilled by changing the
phase field on the inactive set only.  If this set contains too few degrees of freedom, the PDAS is not
successful in solving the projection-type problem and thus the algorithm breaks down. 

To overcome this numerical issue on coarse meshes, we additionally require that a given amount, say 2\%, of the phase field's degrees of freedom are inactive.  If this is not the case, we use mesh adaptation that is based on $\e{\varphi}$ only, namely we use the jumps of the normal derivatives of $\e{\varphi}$ across edges as proposed in \cite{GHHKNumerics} to generate new degrees of freedom inside the interface to be
able to proceed with the PDAS.
\BBB

We stop the adaptation loop as soon as a given maximum number of degrees of freedom is reached.

\subsection{Topology optimization - a tube through heavy ground}
\AAA Although we have mainly focused on shape optimization with the phase field approach in this paper, we point out that \BBB using a phase field variable for the representation of the \AAA unknown shape also \BBB allows us to deal with situations
where no a priori \AAA toplogical \BBB information is available.  \AAA In particular, the phase field approach is capable of topology optimization, as done in \cite{Blank,PRW,Takezawa,WangZhou}. \BBB  Here we consider the situation where the domain $\Omega = (0,1)^{2}$ contains several impermeable rocks and we would like to search for a tube that connects the inflow at the bottom to the outflow at the top, see Fig.~\ref{fig:heavyGround}. 

\begin{figure}
  \centering
\includegraphics[width=0.4\textwidth]{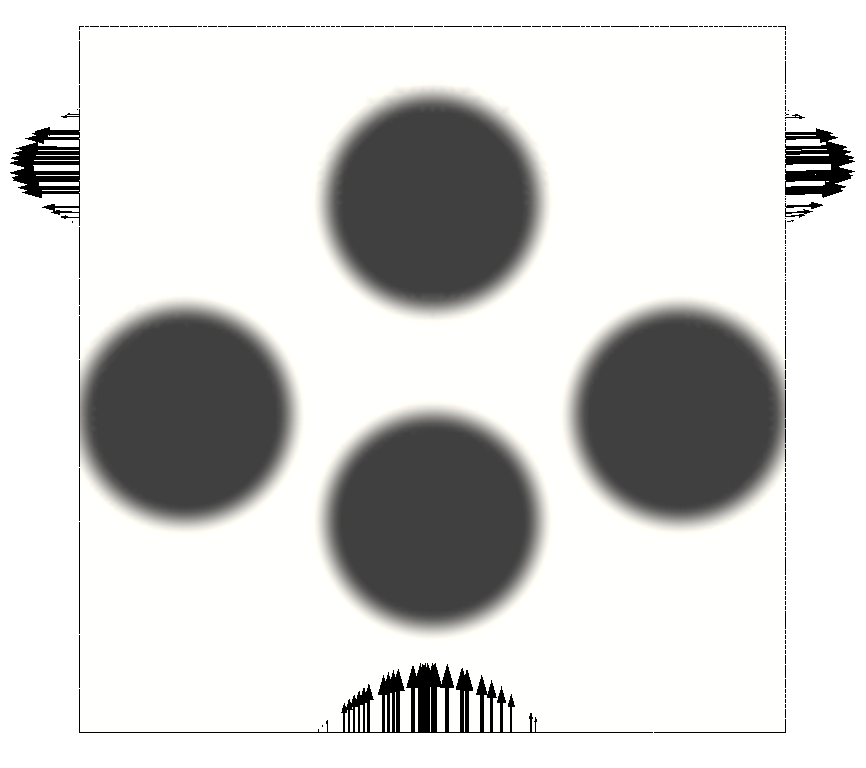}
\caption{The inflow and outflow conditions for the Navier--Stokes equations together
with the
  location of the rocks.}
\label{fig:heavyGround}
\end{figure}

Constructing a tube through the rocks is expensive and therefore a tube that avoids these regions is desired.  
 So this is a setting where we want to minimize the cost of an object.  
 The inflow and the outflow regions as well as the location of the rocks are a
priori known.
  We define the inflow and outflow conditions as
\begin{align*}
g_{in}(x)& =
\begin{pmatrix}
0\\
\max \left(2\left( 1-\left(\frac{x_{1}-0.5}{1/6}\right)^2  \right),0.0\right)
\end{pmatrix}, \\
g_{out}^i(x)&
=
\begin{pmatrix}
\max\left((-1)^{i}\left( 1-\left(\frac{x_{2}-0.8}{1/12}\right)^2  \right),0.0\right)\\
0
\end{pmatrix} \text{ for } i = 1,2.
\end{align*}
For the objective functional we define a `rock' centered at $\bm{m}$ with radius $\sigma$ and associated cost $c$ as
\begin{align*}
& R[\bm{m},\sigma,c](x) :=
  (c-1) \left (
\frac{\phi_{0}(-\frac{1}{\eps}(\norm{\frac{x-\bm{m}}{\sigma}}-1))+1}{2} \right
)+1, \\
\text{ where } & \quad  \phi_{0}(z) = 
  \begin{cases}
  \sin(z) & \text{ if } \abs{z} \leq \frac{\pi}{2},\\
  \text{sign}(z) & \text{else.}
  \end{cases} 
\end{align*}
We consider the functions
\begin{align*}
& b(x, \bm{u}, \nabla \bm{u}, p, \varphi):= \left (\frac{1+\varphi}{2} \right )
\prod_{i=1}^{4} R[\bm{m}_{i}, \sigma, c](x),
\quad h(x, \nabla \bm{u}, p, \nabla \varphi) = 0,
\end{align*} 
where 
\begin{equation*}
\begin{alignedat}{5}
\bm{m}_{1} & = (0.5, 0.3)^{\top}, \quad && \bm{m}_{2} && = (0.15, 0.45)^{\top}, \\
\bm{m}_{3} & = (0.85, 0.45)^{\top}, \quad && \bm{m}_{4} && = (0.5, 0.75)^{\top}.
\end{alignedat}
\end{equation*}
The optimization problem \eqref{OpProblem} then becomes
\begin{align*}
\min_{(\varphi, \bm{u}, p)} \mathcal{J}_{\eps}(\varphi, \bm{u}, p)&  = \int_{\Omega}
\frac{1}{2} \alpha_{\eps}(\varphi) \abs{\bm{u}}^{2} + \frac{1 + \varphi}{2}
\prod_{i=1}^{4} R[\bm{m}_{i}, \sigma, c] \dx \\
& \quad  + \int_{\Omega} \frac{\gamma}{\pi} \left ( \frac{1}{\eps}
\Psi(\varphi) + \frac{\eps}{2} \abs{\nabla \varphi}^{2} \right ) \dx,
\end{align*}
subject to $\varphi \in \Phi$, $\bm{u} \in \bm{H}^{1}_{\bm{g},
\sigma}(\Omega)$, $p \in L^{2}_{0}(\Omega)$ satisfying \eqref{NSweak}, and $\alpha_{\eps}$ was defined earlier in \eqref{num:alpha}.  For this example we do not apply any integral constraints, \AAA as it serves to demonstrate the strength of the phase field approach in being able to deal with situations where no a priori topological information is known.  Having the solution to the unconstrained problem at hand, one might reduce for example the size of the tube by imposing additional volume constraints.  However, specifying such constraints beforehand might lead to inadmissible situations.
\BBB 
 
We start the optimization procedure with no \AAA prior \BBB information, i.e., $\e{\varphi}^{0}
\equiv 0$, on a homogeneous coarse grid with mesh size $h=1/20$ yielding 685 degrees of unknowns for $\e{\varphi}$.  We stop the solution and adaptation procedures as soon as an optimal solution with
more than 100000 degrees of freedom is found.  The numerical parameters are: $\sigma = 0.15$, $c = 50$, $\eps = 0.01$, $\overline{\alpha} = 5$, $\mu = 0.02$ and $\gamma = 0.001$.  To stress the advantages of our approach in Fig.~\ref{fig:heavyGround:iterates} we show
$\e{\varphi}$ at various stages of the optimization procedure.

\begin{figure}
  \centering
  \includegraphics[width=0.25\textwidth]{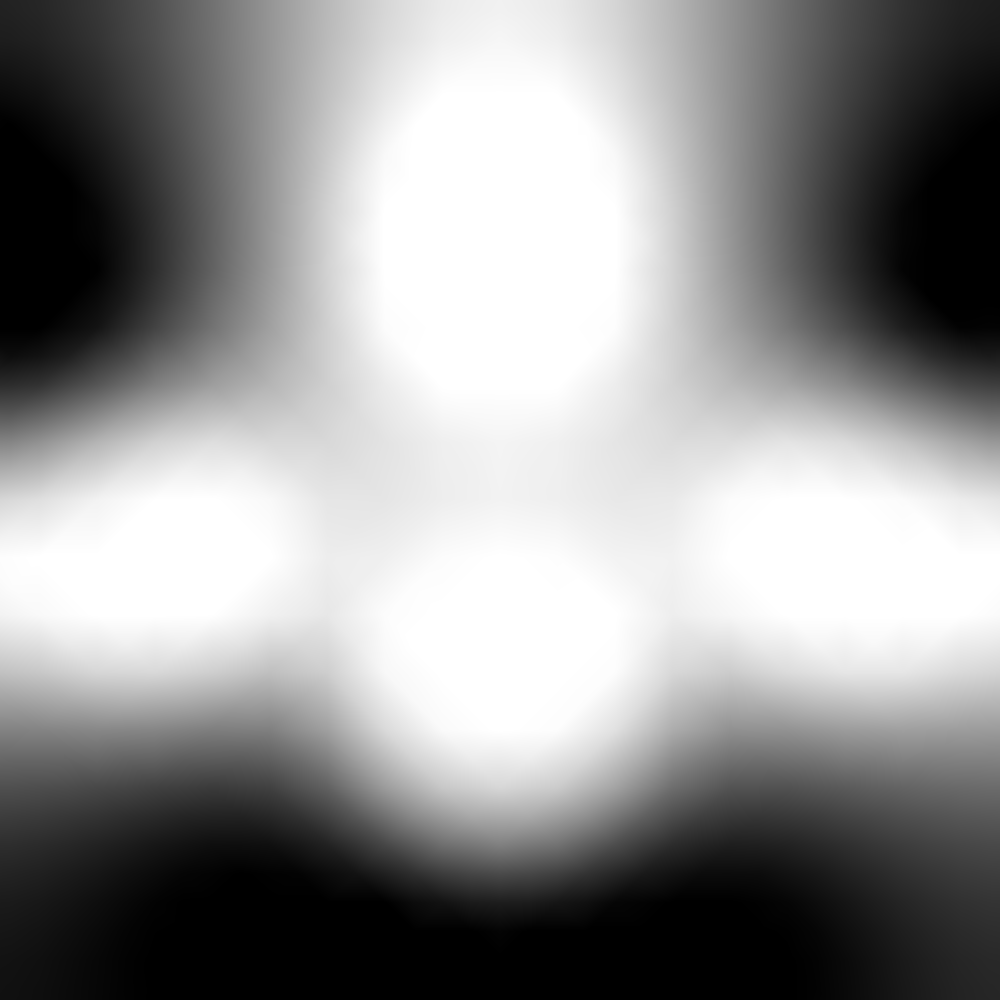}
  \hspace{1em}
  \includegraphics[width=0.25\textwidth]{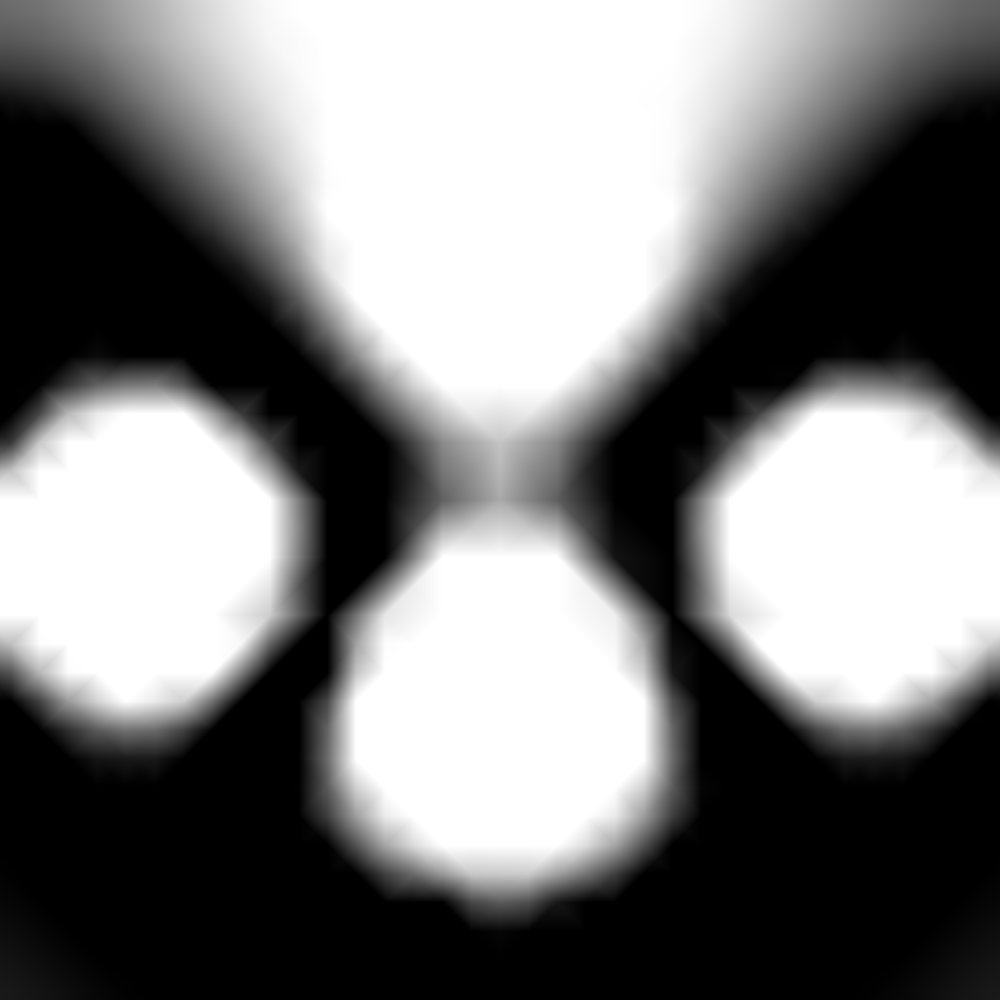}
  \hspace{1em}
  \includegraphics[width=0.25\textwidth]{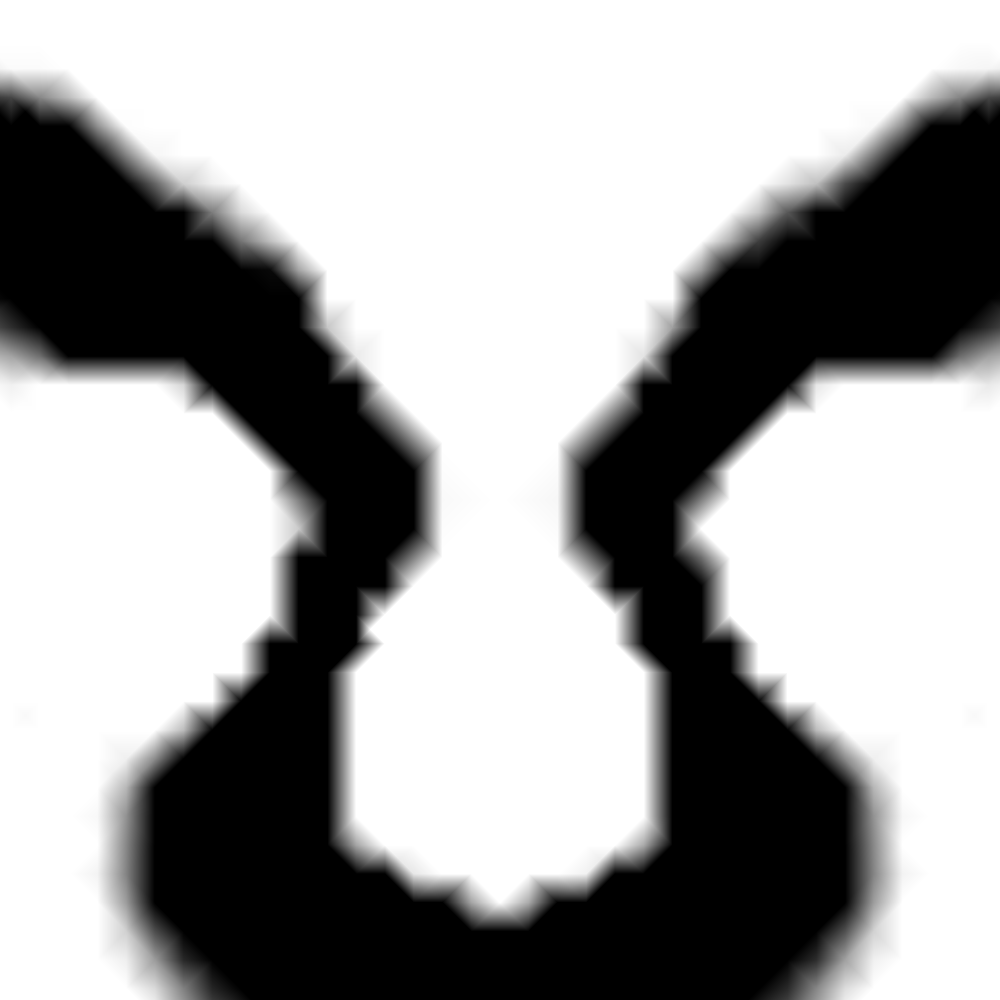}\\
  \includegraphics[width=0.25\textwidth]{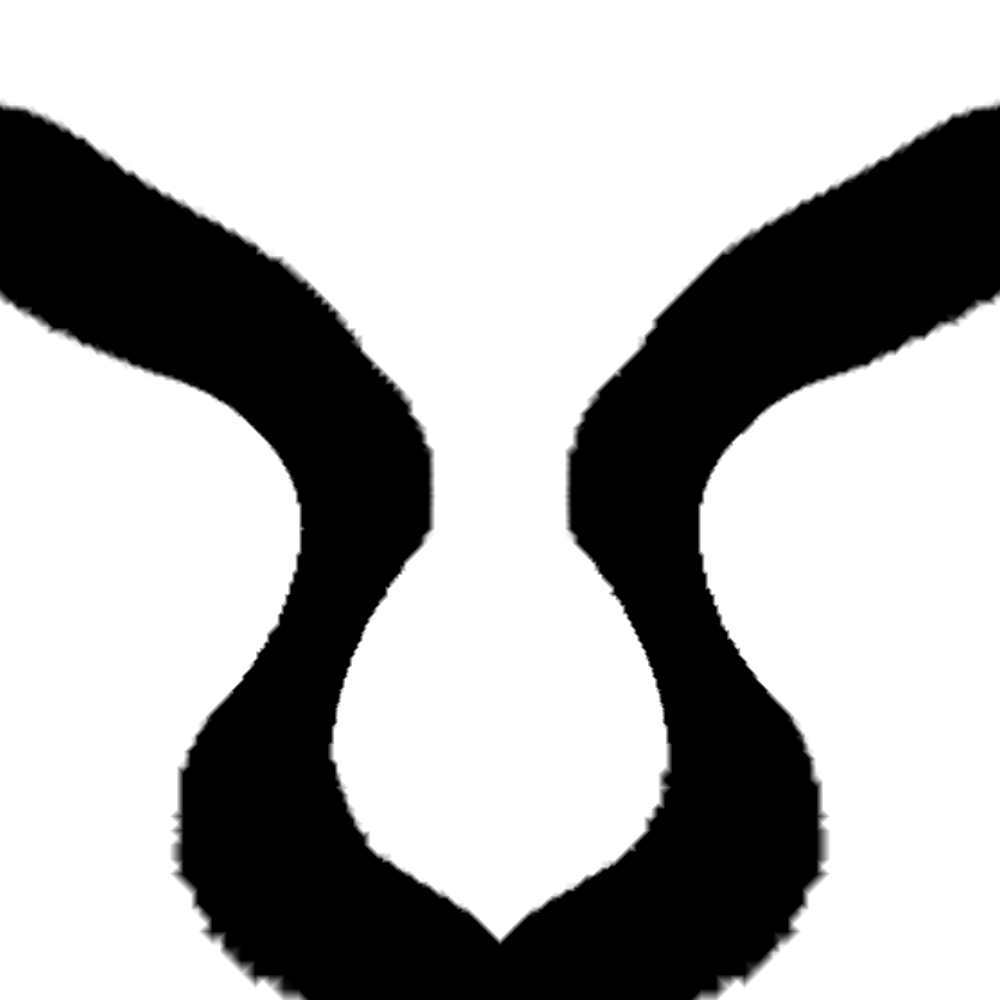}
  \hspace{1em}
  \includegraphics[width=0.25\textwidth]{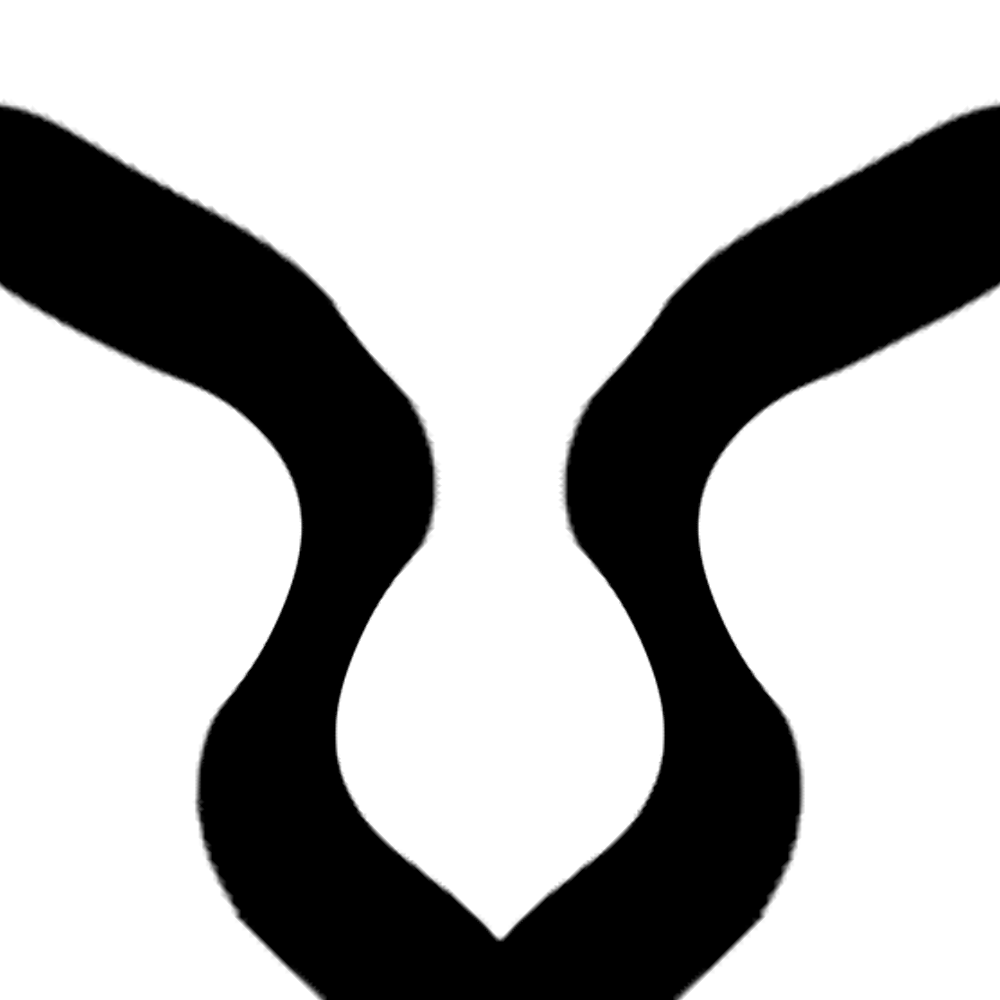}
  \hspace{1em}
  \includegraphics[width=0.25\textwidth]{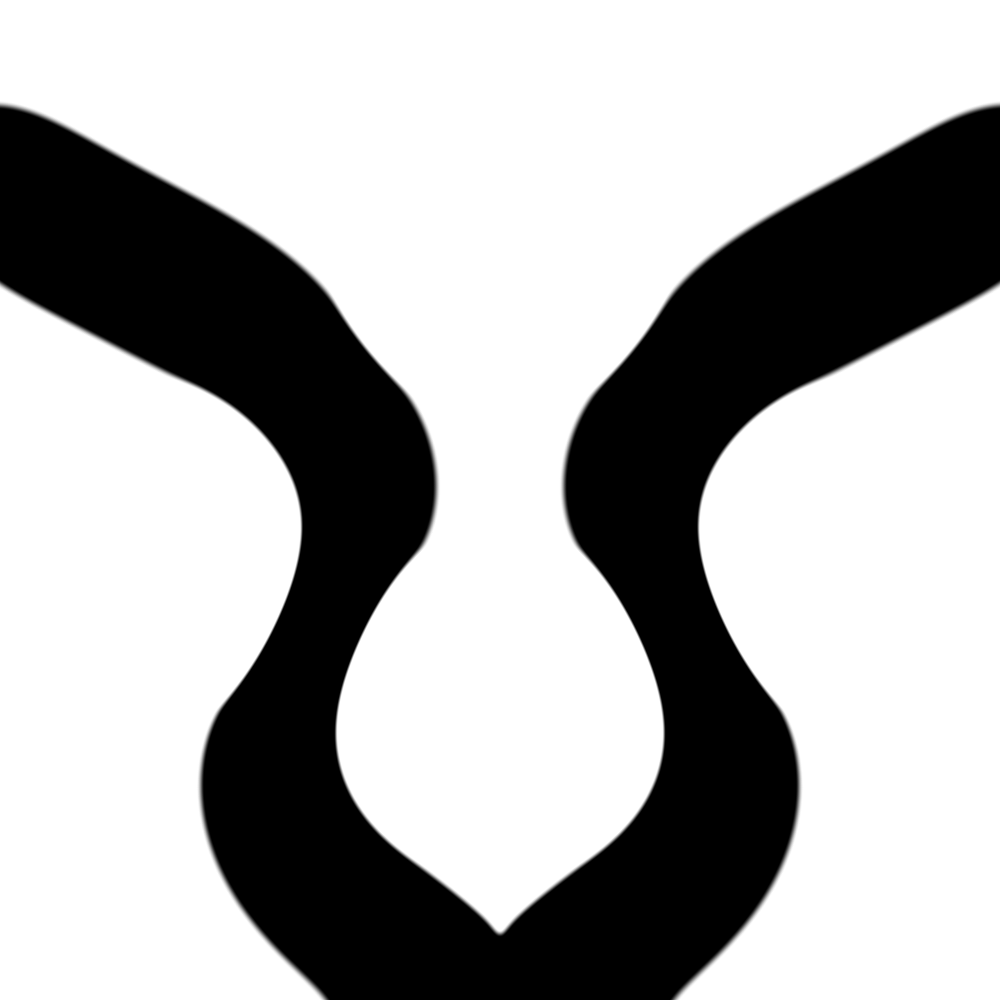}
  \caption{The iterations 20,30,40,80,120,190 of the VMPT to minimize the cost of a
tube through heavy ground. We see that after 40 steps already an optimal
structure is found and that in subsequent steps mostly the resolution of the
structure is improved.  Let us also note that at iteration 20 we have only 923
degrees of freedom for $\e{\varphi}$ and in iteration 40 still only 1398 degrees
of freedom.  The final iteration has 125069 degrees of freedom.}
\label{fig:heavyGround:iterates}
\end{figure}

\subsection{Reproduction of results on drag minimization from earlier works}
\label{ssec:num:reproDragSurf}
We now reproduce the numerical results for the surface formulation of drag minimization presented by the authors in \cite{GHHKL}.  \AAA The key distinction is that in \cite{GHHKL} a gradient flow approach is used to solve the optimality conditions, leading to a non-linear time-dependent equation of Cahn--Hilliard type for $\e{\varphi}$.  However, here we employ the VMPT method to solve the optimization problem, which reads \BBB
\begin{align*}
\min_{(\varphi, \bm{u}, p)} \mathcal{J}_{\eps}(\varphi, \bm{u}, p) & = \int_{\Omega}
\tfrac{1}{2} \alpha_{\eps}(\varphi) \abs{\bm{u}}^{2} +  \frac{\gamma}{\pi} \left ( \frac{1}{\eps} \Psi(\varphi) +
\frac{\eps}{2} \abs{\nabla \varphi}^{2} \right ) \dx \\
& + \int_{\Omega} \tfrac{1}{2} \bm{a} \cdot \left
( \mu \left ( \nabla \bm{u} + \left ( \nabla \bm{u} \right )^{\top} \right ) - p \id
\right) \nabla \varphi \dx
\end{align*}
subject to $\varphi \in \AAA \Phi \BBB$, $\bm{u} \in \bm{H}^{1}_{\bm{g},
\sigma}(\Omega)$, $p \in L^{2}_{0}(\Omega)$ 
satisfying \eqref{NSweak} and the volume constraint (see \eqref{SI:VolumeConstraint})
\begin{align}\label{Drag:volCon}
\int_{\Omega} \varphi \dx \leq \beta_{2} \abs{\Omega} \text{ for } \beta_{2} \in
(-1,1).
\end{align}
We use the parameters from \cite{GHHKL}, 
namely $\Omega = (0,1.7) \times (0,0.4)$, $\eps = 0.00025$, $\overline{\alpha} =
0.03$, $\mu = 0.001$ and $\gamma = 0.01$.  
The boundary velocity is set to $\bm{g} = (1,0)^{\top}$ to stay close to the
analysis and we initialize the optimization with $\e{\varphi}^{0}(x) :=
-R[(0.5,0.2)^{\top},0.25,-1](x)$, i.e., a ball around $m = (0.5,0.2)^{\top}$ with
radius $r = 0.25$.  For the volume constraint, we choose $\beta_{2} = \beta =
0.975$, i.e., $\int_{\Omega} \varphi \dx \leq 0.663$.

To be able to use only a small number of unknown as long as possible, we start the
optimization with
$\eps = 0.008$ and a maximum number of allowed degrees of freedom of 10000.  
We halve the value of $\eps$ as soon as an optimal solution is found with current
maximum allowed number of degrees 
of freedom and increase this value by 20\%, resulting in 45000 unknowns for the
final result.  In Fig.~\ref{fig:num:repro_drag_surf} we show the optimal shape for different
values of $\eps$, 
namely $\eps \in \{0.008,0.004,0.002,0.001,0.0005,0.00025\}$.  
In Table~\ref{tab:num:repro_drag_surf} we show the diffuse interface drag
\begin{align}\label{PF:HydrdodynamicForce} 
\e{F}^{D} = \int_{\Omega} \tfrac{1}{2} \bm{a} \cdot \left ( \mu \left ( \nabla \bm{u} + \left ( \nabla \bm{u} \right)^{\top} \right) - p \id \right ) \nabla \varphi \dx
\end{align}
and the sharp interface drag 
\begin{align}\label{SI:HydrodynamicForce}
F^{D} = \int_{\Gamma} \bm{a} \cdot \left ( \mu \left ( \nabla \bm{u} + \left ( \nabla \bm{u} \right)^{\top} \right) - p \id \right )\bm{\nu} \dHaus
\end{align} by evaluation with $\bm{a} = (1.0,0.0)^{\top}$ over $\Gamma =
\{\e{\varphi} = 0\}$.

We reproduce the results found in \cite{GHHKL} where a gradient flow approach is applied that is based on an artificial time evolution.  We stress that, in using a gradient flow approach, the interface has to be resolved in each time step of the temporal evolution, which leads to a large numerical effort.  To be precise, while the results shown here are found in a few hours using the VMPT method, the results in \cite{GHHKL} required several days of calculation using the gradient flow.

\begin{figure}
  \centering
        \includegraphics[width=0.25\textwidth]{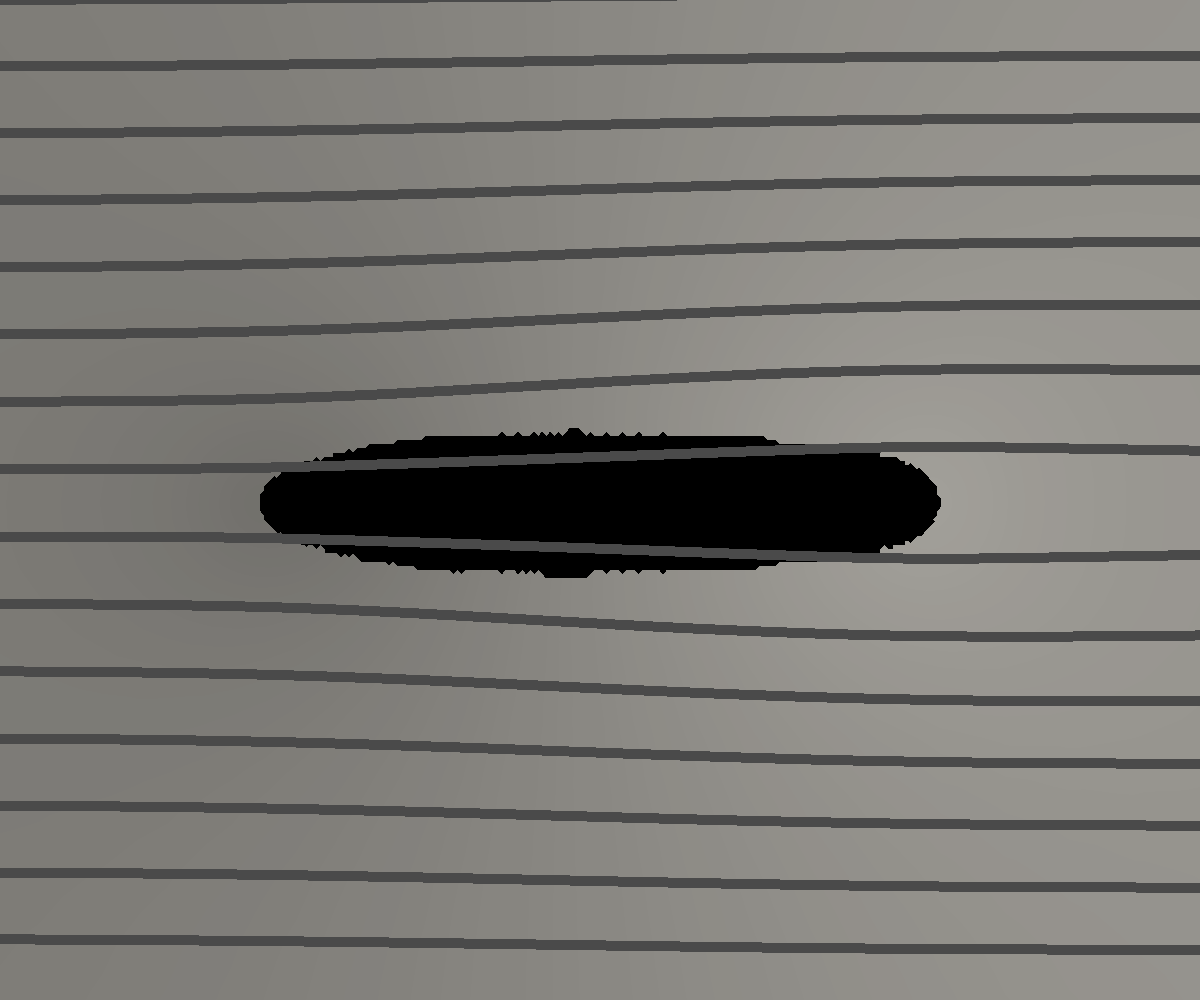}
        \includegraphics[width=0.25\textwidth]{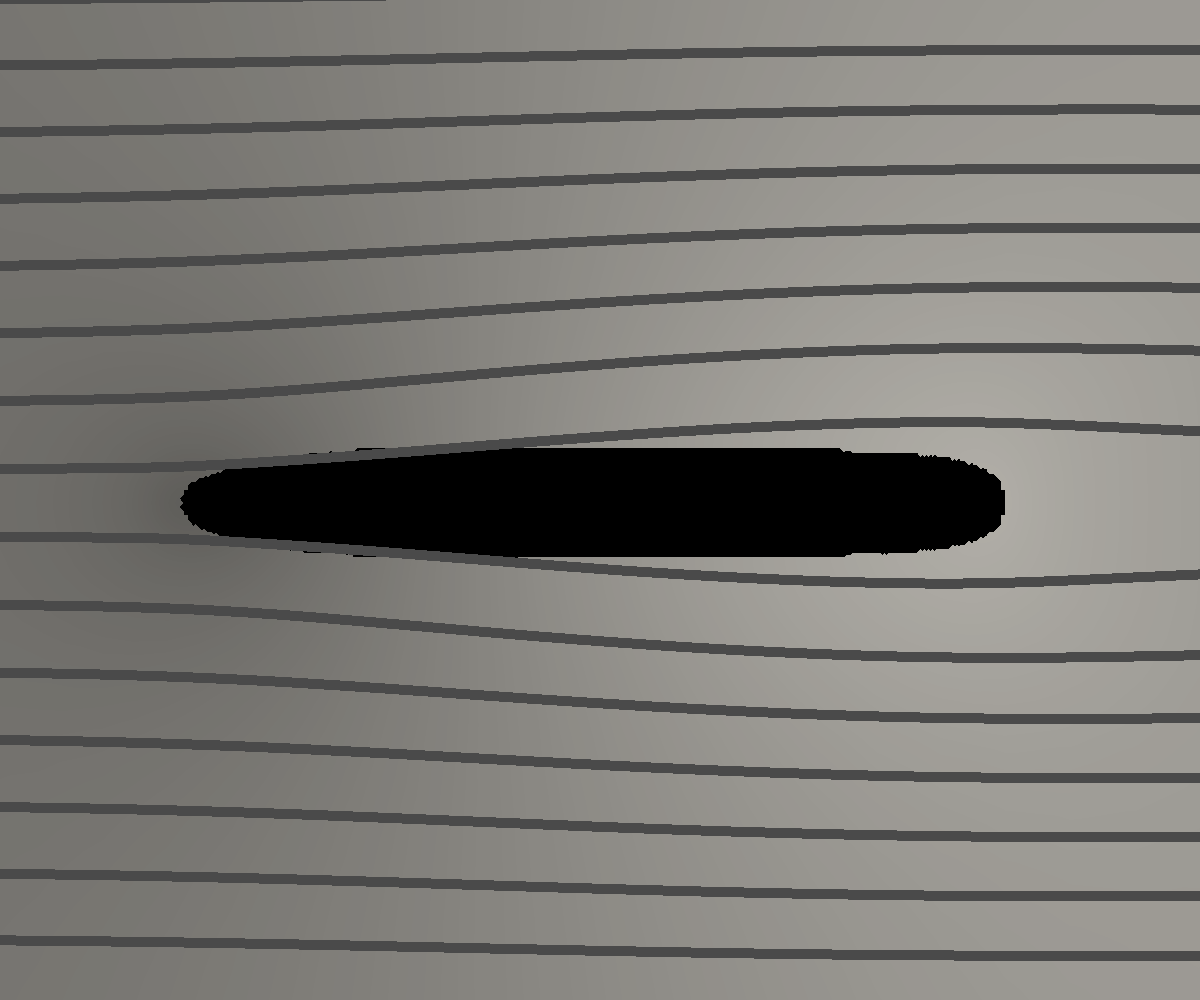}
        \includegraphics[width=0.25\textwidth]{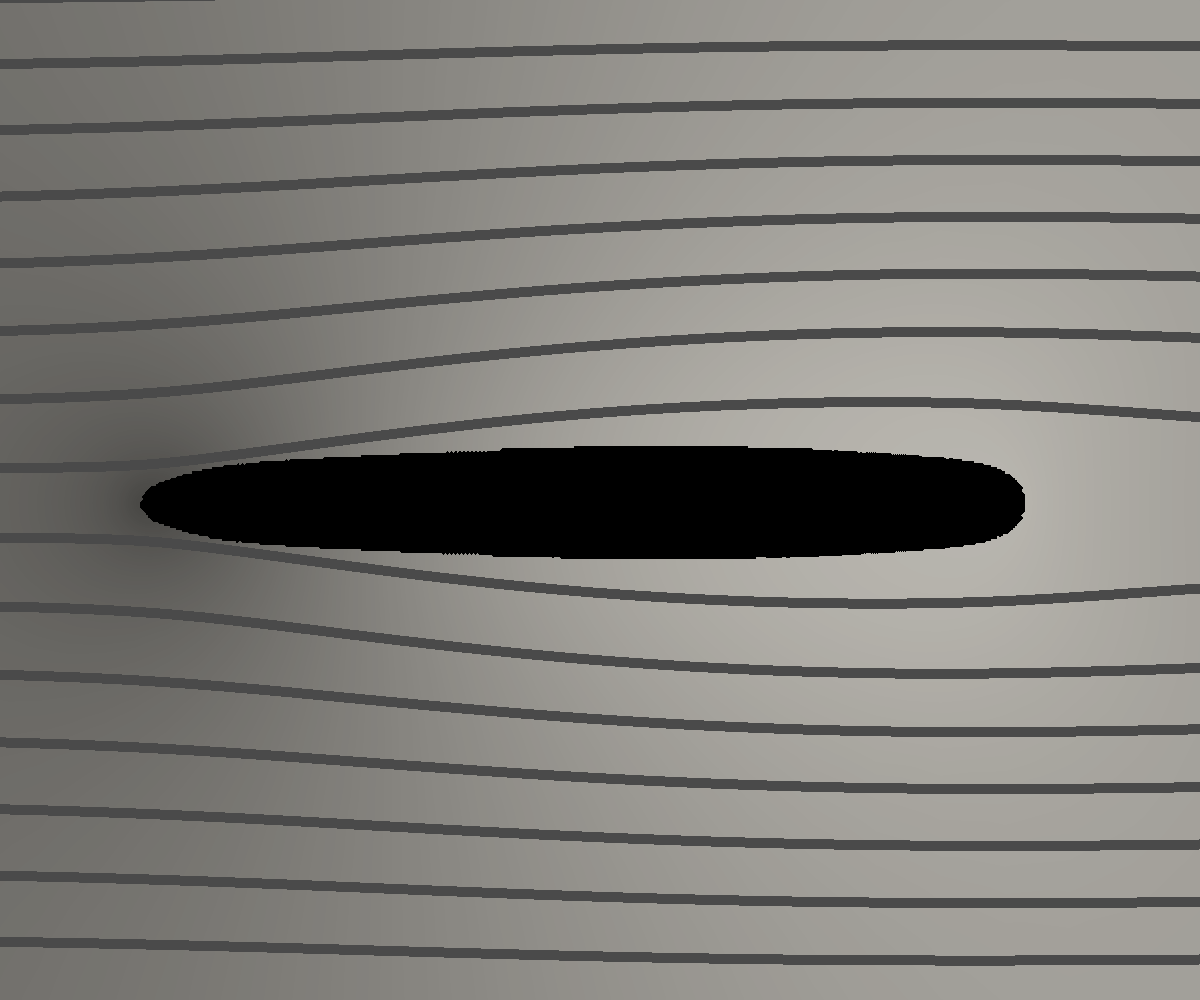}\\[1em]
        \includegraphics[width=0.25\textwidth]{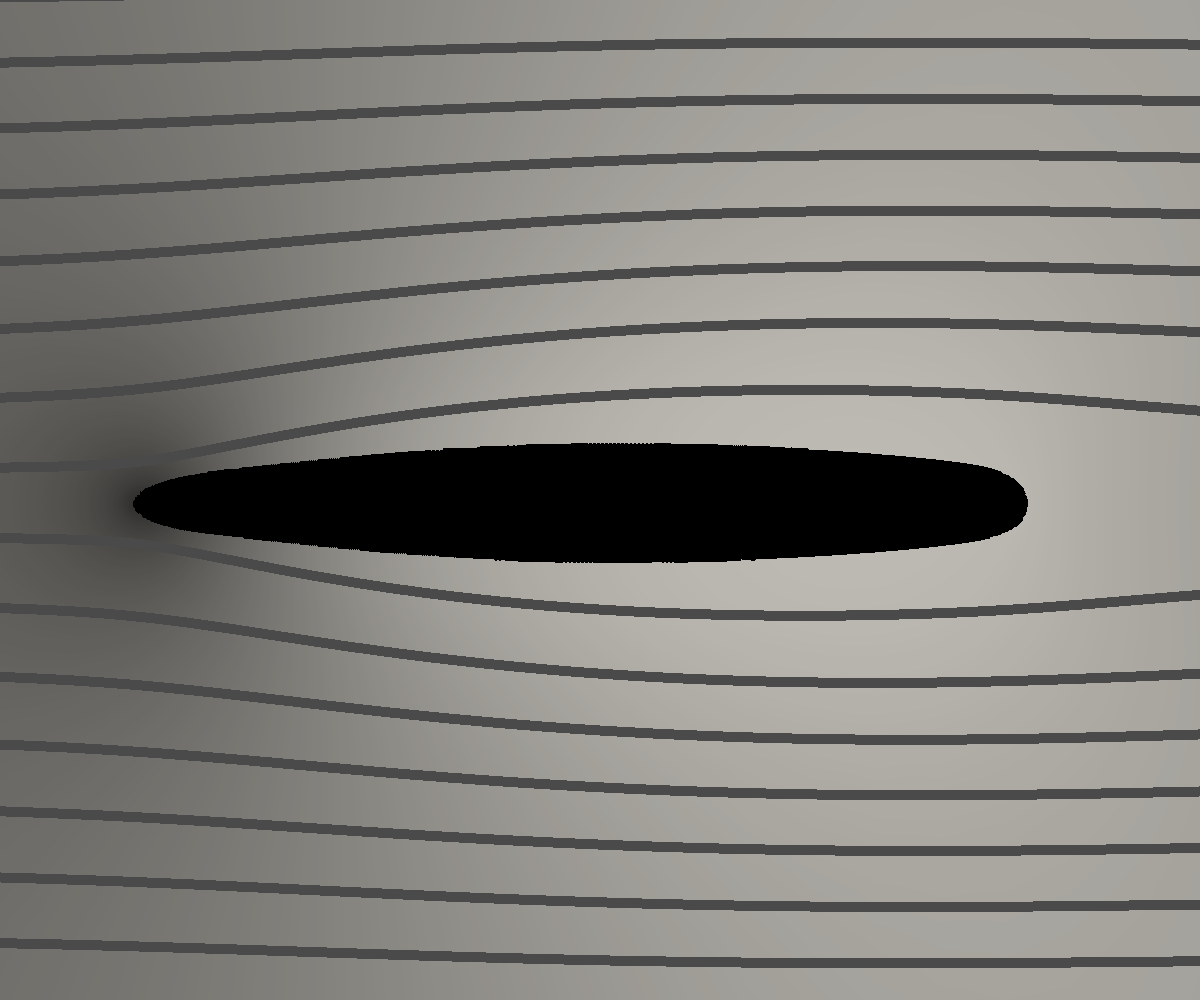}
        \includegraphics[width=0.25\textwidth]{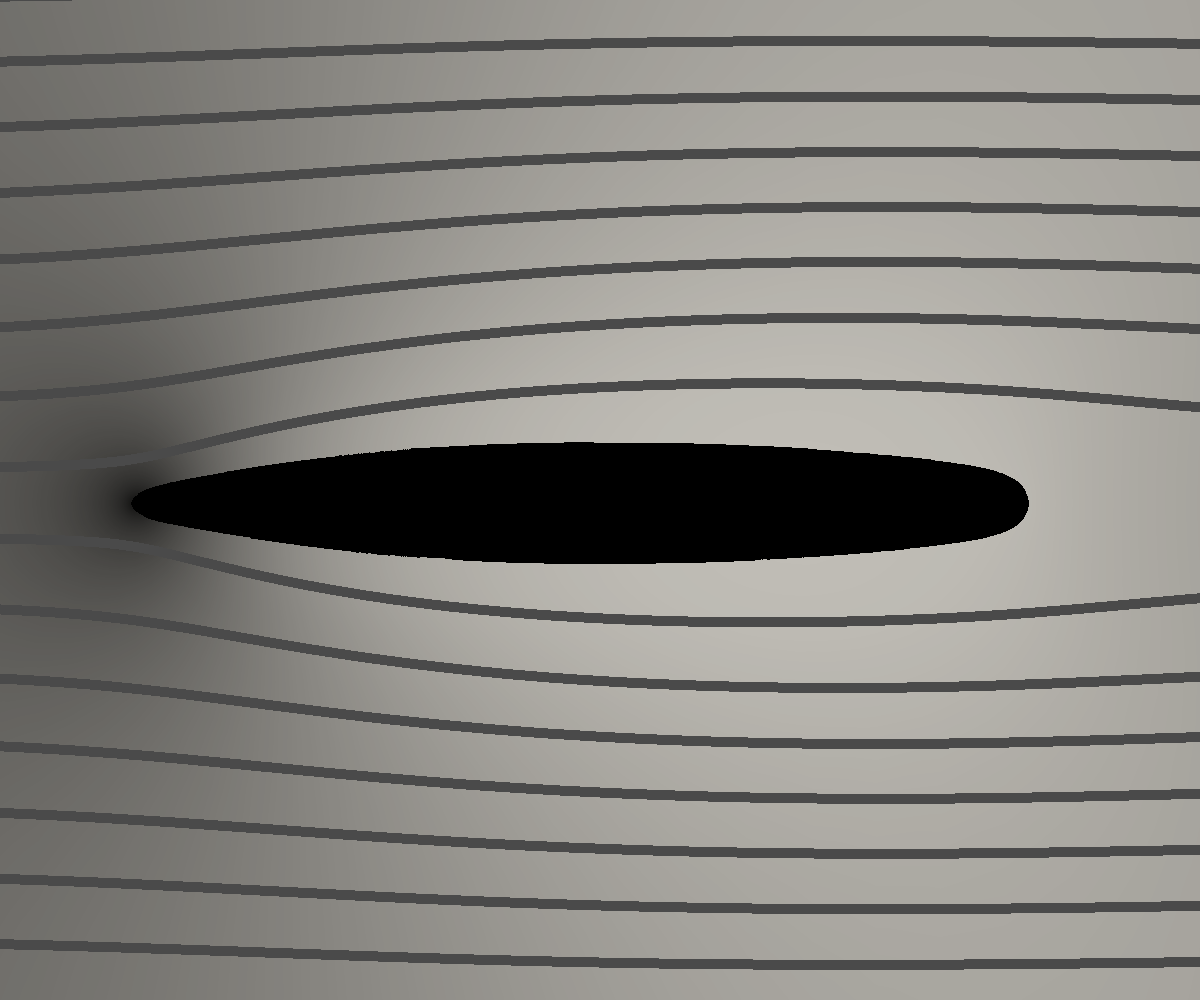}
        \includegraphics[width=0.25\textwidth]{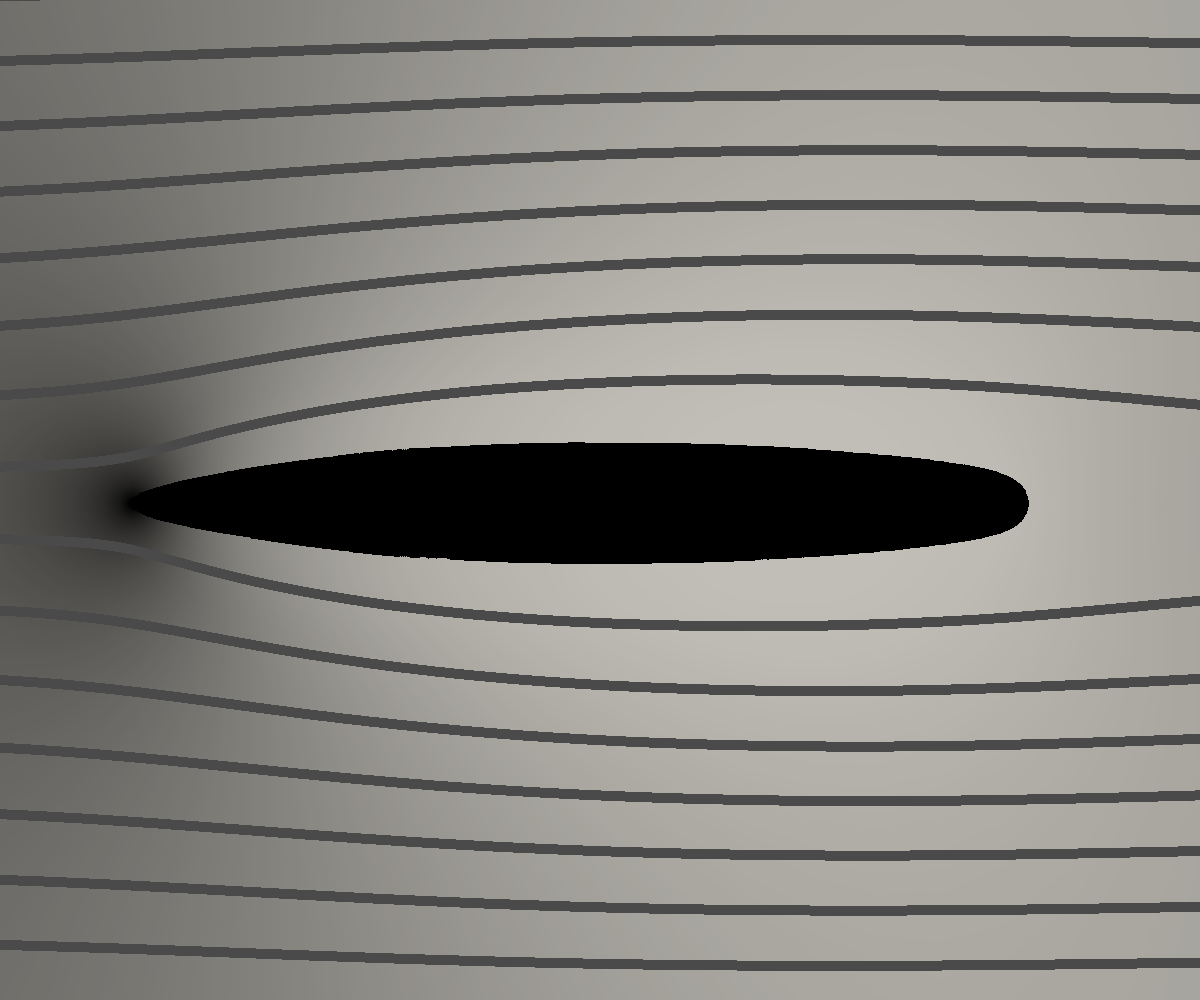}
        \caption{The optimal shapes for the minimization of drag in the surface formulation
with the
        parameters from \cite{GHHKL} for $\eps = 0.008,0.004,0.002,0.001,0.0005,0.00025$
(left upper
        to right lower). 
        Inflow from the left with $\bm{g} \equiv (1,0)^\top$.
        The shape is shown in black.
        The pressure is shown in gray, where darker gray means larger pressure, and some
streamlines of the velocity are shown in black.}
        \label{fig:num:repro_drag_surf}
\end{figure}

\begin{table}
\centering
\begin{tabular}{c|ccc}
 & & & \\
$\eps$ & 0.008  & 0.004 & 0.002\\
\hline
& & &  \\
$\e{F}^{D}$ & $1.0570 \times 10^{-2}$ & $1.9806  \times 10^{-2}$ & $2.8370  \times
10^{-2}$ \\
& & &  \\
$F^{D}$ & $1.1103  \times 10^{-2}$ & $2.0519 \times 10^{-2}$ & $2.9025 \times
10^{-2}$ \\
\hline
& & & \\
$\eps$ & 0.001 & 0.0005 & 0.00025\\
\hline
& & &  \\
$\e{F}^{D}$ & $3.4255 \times 10^{-2}$ & $3.8184 \times 10^{-2}$ & $4.0739 \times
10^{-2}$ \\
& & &  \\
$F^{D}$ & $3.4777 \times 10^{-2}$ & $3.8572 \times 10^{-2}$ & $4.1012 \times 10^{-2}$
\end{tabular}
\caption{The diffuse ($\e{F}^{D}$) and sharp ($F^{D}$) drag for the parameters from
\cite{GHHKL}
and different values of $\eps$.  Note that $\alpha_{\eps}(-1) \to \infty$ for $\eps
\to 0$, 
i.e., the object becomes less permeable and thus the drag increases with $\eps \to
0$.  
In \cite{GHHKL} for $\eps = 0.00025$ we observed $\e{F}^{D} = 3.9117 \times 10^{-2}$
and $F^{D} = 3.9499 \times 10^{-2}$. }
\label{tab:num:repro_drag_surf}
\end{table}

\subsection{Comparison of volume and surface formulations for drag}
\label{ssec:num:volsurf}
Let us point out that the hydrodynamic force component \eqref{SI:HydrodynamicForce} \AAA in its classical
representation as a surface integral over $\Gamma$ \BBB
can be expressed in terms of a volume integral over the fluid region $E$, and this reformulation has been used extensively in numerical simulations, see \cite[\S 5.1]{BLUU},  \cite[\S 2.2]{GLLS}, and \cite[\S 9]{HoffmanJohnson}.  Given the unit
vector $\bm{a} \AAA \neq \bm{0} \BBB$, let $\bm{\eta}$ be a smooth vector field such
that
\begin{align}\label{eta:vectorfield:property}
\bm{\eta} = \bm{a} \text{ on } \Gamma \text{ and } \bm{\eta} = \bm{0} \text{ on }
\pd \Omega.  
\end{align}
\AAA This can be done since it is assumed in the introduction that $\Gamma$ does not
intersect with $\pd \Omega$. \BBB  Then, by taking the scalar product of
\eqref{SI:state:1} with $\bm{\eta}$, we obtain
\begin{align*}
0 = \int_{E} - \div \left (\mu \nabla \bm{u} \right ) \cdot \bm{\eta} + (\bm{u}
\cdot \nabla) \bm{u} \cdot \bm{\eta} + \nabla p \cdot \bm{\eta} - \bm{f} \cdot
\bm{\eta} \dx.
\end{align*}
Integrating by parts and noting that the boundary integrals over $\pd \Omega$ vanish
owning to $\bm{\eta} = \bm{0}$ on $\pd \Omega$ yields
\begin{equation}\label{SI:HydrodynamicForce:Volume}
\begin{aligned}
& \int_{\Gamma} \bm{a} \cdot \left ( \mu \left ( \nabla \bm{u} + \left (
\nabla \bm{u} \right)^{\top} \right ) - p \id \right ) \bm{\nu} \dHaus = \int_{\Gamma} \bm{a} \cdot \left ( \mu  \nabla \bm{u} - p \id \right ) \bm{\nu} \\
& \quad = - \int_{E} \mu \nabla \bm{u} \cdot \nabla \bm{\eta} + (\bm{u} \cdot \nabla )
\bm{u} \cdot \bm{\eta} - p \div \bm{\eta} - \bm{f} \cdot \bm{\eta} \dx.
\end{aligned}
\end{equation}
Here we have also used that $\bm{u}$ has no tangential component on $\Gamma$ due to
the no-slip condition $\bm{u} = \bm{0}$ on $\Gamma$, and together with the
divergence-free condition, we obtain that $(\nabla \bm{u} )^{\top} \bm{\nu} =
\bm{0}$ on $\Gamma$ (see \cite[\S 2]{GHHKL} for more details).  This implies that we
can also consider the following function as the volume formulation of the drag (if $\bm{a}$ is parallel to the flow direction)
\begin{align}
\label{SI:VolumeDrag}
\int_{\Omega} -\tfrac{1}{2}(1+\varphi) \left (\mu \nabla \bm{u} \cdot \nabla \bm{\eta} + (\bm{u}
\cdot \nabla ) \bm{u} \cdot \bm{\eta} - p \div \bm{\eta} - \bm{f} \cdot \bm{\eta} \right ) \dx.
\end{align}
Alternatively, using integration by parts and the boundary conditions $\bm{\eta} =
\bm{0}$ on $\pd \Omega$ and $\bm{u} = \bm{0}$ on $\Gamma$, we see that 
\begin{align*}
\int_{E} (\bm{u} \cdot \nabla ) \bm{u} \cdot \bm{\eta} \dx = - \int_{E} (\bm{u}
\cdot \nabla) \bm{\eta} \cdot \bm{u} \dx,
\end{align*}
and so we may also use the function
\begin{align}\label{SI:VolumeDrag:alt}
\int_{\Omega} - \tfrac{1}{2}(1+\varphi) \left ( \mu \nabla \bm{u} \cdot \nabla \bm{\eta} -
(\bm{u} \cdot \nabla ) \bm{\eta} \cdot \bm{u} - p \div \bm{\eta} - \bm{f} \cdot
\bm{\eta} \right ) \dx,
\end{align}
as a volume formulation for the drag.  The corresponding phase field approximations of \eqref{SI:VolumeDrag} and \eqref{SI:VolumeDrag:alt} have the exact same form.  However, for our numerical investigations, we use the formation \eqref{SI:VolumeDrag:alt}
instead of \eqref{SI:VolumeDrag}.

The aim of this section is to compare results of Sec.~\ref{ssec:num:reproDragSurf} with the following drag minimization problem
\begin{align*}
\min_{(\varphi, \bm{u}, p)} \mathcal{J}_{\eps}(\varphi, \bm{u}, p) & = \int_{\Omega}
\frac{1}{2} \alpha_{\eps}(\varphi) \abs{\bm{u}}^{2} + \frac{\gamma}{\pi} \left ( \frac{1}{\eps} \Psi(\varphi) +
\frac{\eps}{2} \abs{\nabla \varphi}^{2} \right ) \dx \\
& + \int_{\Omega} - \tfrac{1}{2}(1+\varphi) \left ( \mu \nabla \bm{u} \cdot \nabla \bm{\eta} -
(\bm{u} \cdot \nabla ) \bm{\eta} \cdot \bm{u} - p \div \bm{\eta} - \bm{f} \cdot
\bm{\eta} \right ) \dx
\end{align*}
subject to $\varphi \in \Phi$, $\bm{u} \in \bm{H}^{1}_{\bm{g},
\sigma}(\Omega)$, $p \in L^{2}_{0}(\Omega)$ 
satisfying \eqref{NSweak} and the volume constraint \eqref{Drag:volCon}.  In particular, we compare the optimal shapes obtained with volume formulation \eqref{SI:VolumeDrag:alt} and those obtained with the surface formulation \eqref{PF:HydrdodynamicForce} for the drag.  For the above optimization problem, we consider the same setup as in Sec.~\ref{ssec:num:reproDragSurf} and set $\bm{\eta} \equiv \bm{a}$ on $(0.15,1.0)\times(0.13,0.27)$.

Using the surface formulation \eqref{PF:HydrdodynamicForce} we observe that for larger values of $\overline{\alpha}$ (the constant in \eqref{num:alpha}) an interfacial region $\{\abs{\e{\varphi}} < 1\}$ that is neither fluid nor object appearing in front of the object.  A similar behaviour was observed in the previous work \cite{GHHKL} with another minimization algorithm.  In any case, a sufficiently impermeable object can be obtained by using smaller values of $\eps$.  We stress that, in Sec.~\ref{ssec:num:reproDragSurf} for $\eps = 0.00025$ the the velocity $\abs{\e{\bm{u}}}$ inside the object is five orders of magnitude smaller than outside the object (see \cite[Fig.~1]{GHHKL}).

On the other hand, using the volume formulation
\eqref{SI:VolumeDrag:alt} we have to define the extension of the unit vector field $\bm{a}$, namely the vector field $\bm{\eta}$ which has to vanish at $\pd \Omega$.  We define $\bm{\eta}$ as the solution of a Poisson problem on $\Omega$ with $\bm{\eta} = \bm{a}$ on a square around the object and $\bm{\eta} = 0$ on $\pd \Omega$.  That is, let $S$ denote a square such that $\{\e{\varphi} = -1 \}
\subset S$ and $\pd S \cap \pd \Omega = \emptyset$, then we solve 
\begin{align}\label{eta:extension}
-\Laplace \bm{\eta} = \bm{0} \text{ in } \Omega \setminus S, \quad \bm{\eta} =
\bm{0} \text{ on } \pd \Omega, \quad \bm{\eta} = \bm{a} \text{ in } \overline{S}.
\end{align}
For small values of $\overline{\alpha}$, we observe that the object splits and the solid is collected close to the inflow outflow boundaries.  We believe this behavior is due to the following: On the one hand, due to the boundary condition $\bm{\eta} = \bm{0}$ on $\pd \Omega$, the magnitude $\abs{\bm{\eta}}$ is small close to the inflow and
outflow boundaries, which results in small drag forces.  On the other hand, for $\overline{\alpha}$ small, the porous-medium penalization term $\int_{\Omega} \e{\alpha}(\varphi) \abs{\bm{u}}^{2} \dx$ is small, and thus the value of the objective functional can be reduced by placing material in regions where $\abs{\bm{\eta}}$ is small.  Therefore, in contrast to the surface formulation \eqref{PF:HydrdodynamicForce}, large values of $\overline{\alpha}$ are needed for the volume formulation to obtain reasonable optimal shapes, which additionally allows us to construct sufficiently impermeable objects when we use larger values of
$\eps$.


We use the setup from Sec.~\ref{ssec:num:reproDragSurf} with only one
modification, that we set $\mu = 0.01$.  In Fig.~\ref{fig:num:volsurf:final} the optimal shapes of the
objects using the surface and the volume formulations of the drag are shown.  We observe that the front of the object with both formulations is rather similar, while the surface formulation leads to a less pronounced rear.  The corresponding drag values in sharp interface evaluation \eqref{SI:HydrodynamicForce} as defined in Sec.~\ref{ssec:num:reproDragSurf} are 
$F^{D} =0.106467052$ (volume formulation) and $F^{D} = 0.106470276$ (surface
formulation).

\begin{figure}
  \centering
  \includegraphics[width=0.3\textwidth]{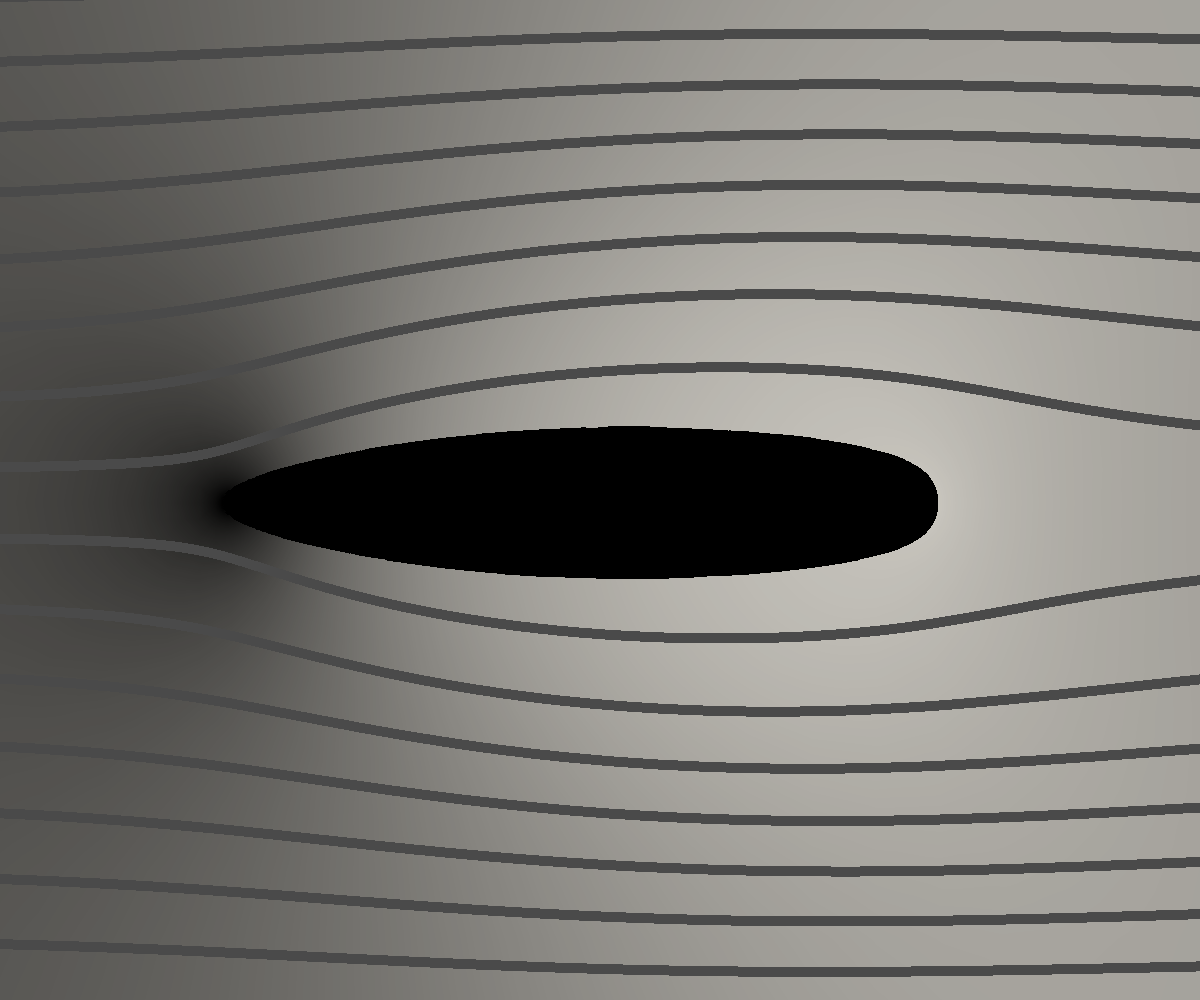}
  \hspace{1em} 
  \includegraphics[width=0.3\textwidth]{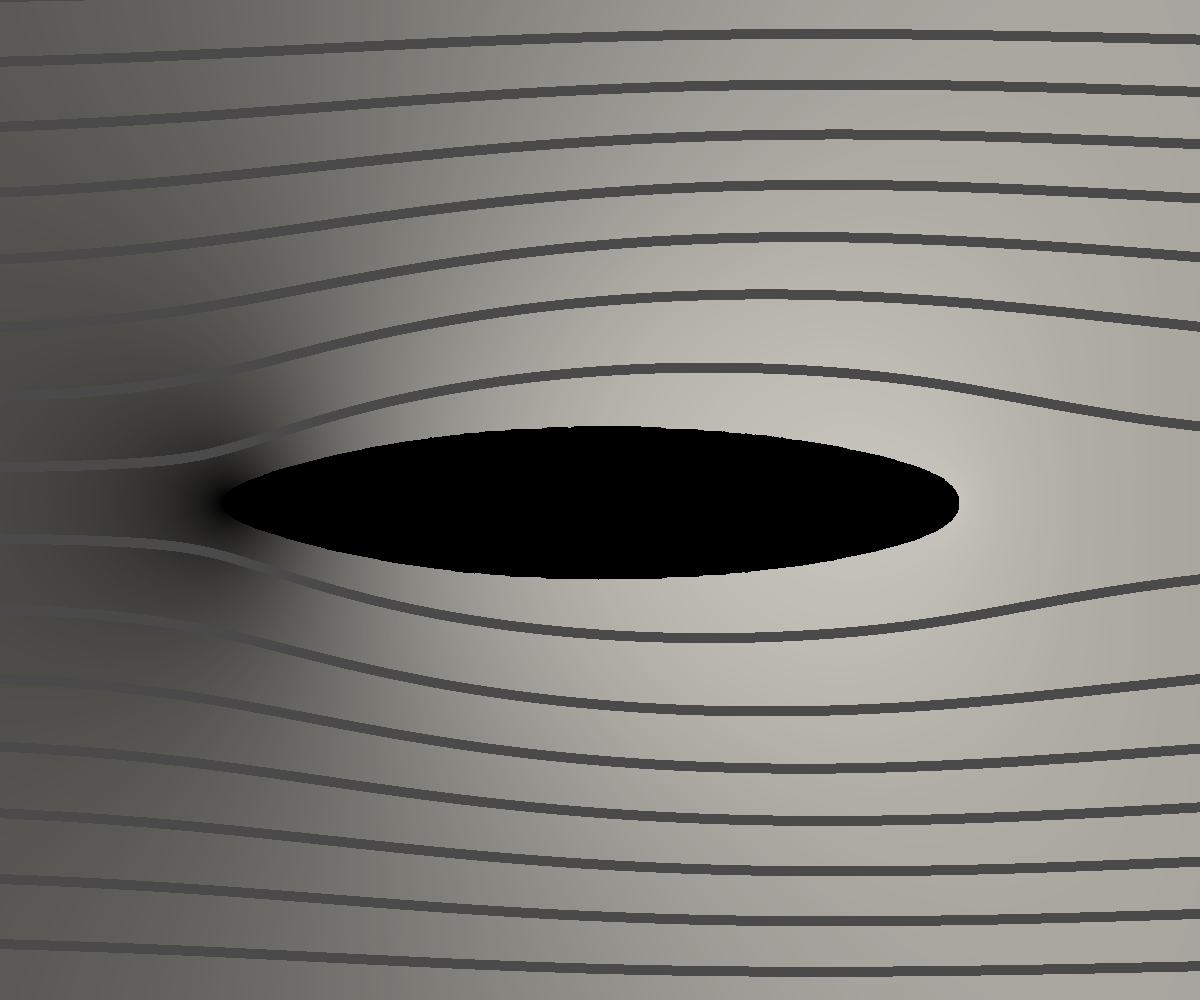} 
  \caption{The optimized shapes of the object using the surface formulation
(\eqref{PF:HydrdodynamicForce}, left) and the volume
  formulation (\eqref{SI:VolumeDrag:alt}, right) of the drag with
$\mu = 0.01$ and $\overline{\alpha} = 0.03$.  We observe that the rear of the
object is slightly more pronounced when the volume formulation is used, while the
drag measured on the zero level-line in both cases is nearly identical.}
  \label{fig:num:volsurf:final}
\end{figure}

As described above, using the volume formulation we can use larger values for $\overline{\alpha}$ to model objects with smaller permeability.  To show the influence of $\overline{\alpha}$ in Fig.~\ref{fig:num:volsurf:vol_large_aT} we show the optimal shape for the above parameters, but using a larger value $\overline{\alpha} = 1$ and $\mu = 0.01$ (left) and $\mu = 0.001$ (right).  For $\mu = 0.01$ we observe, that we get a sharper rear of the object, while the magnitude of the velocity inside the object is of order $10^{-4}$, which is two orders of magnitudes smaller than in the case $\overline{\alpha} = 0.03$.  We also mention that the shapes obtained here bear similarities to the optimized shape for the
minimization of the dissipative energy, as presented in \cite[Figs.~4 and
5]{GHHKNumerics}.  For $\overline{\alpha} = 1$, and $\mu = 0.001$ we observe a symmetric airfoil shape.

\begin{figure} 
  \centering
  \includegraphics[width=0.3\textwidth]{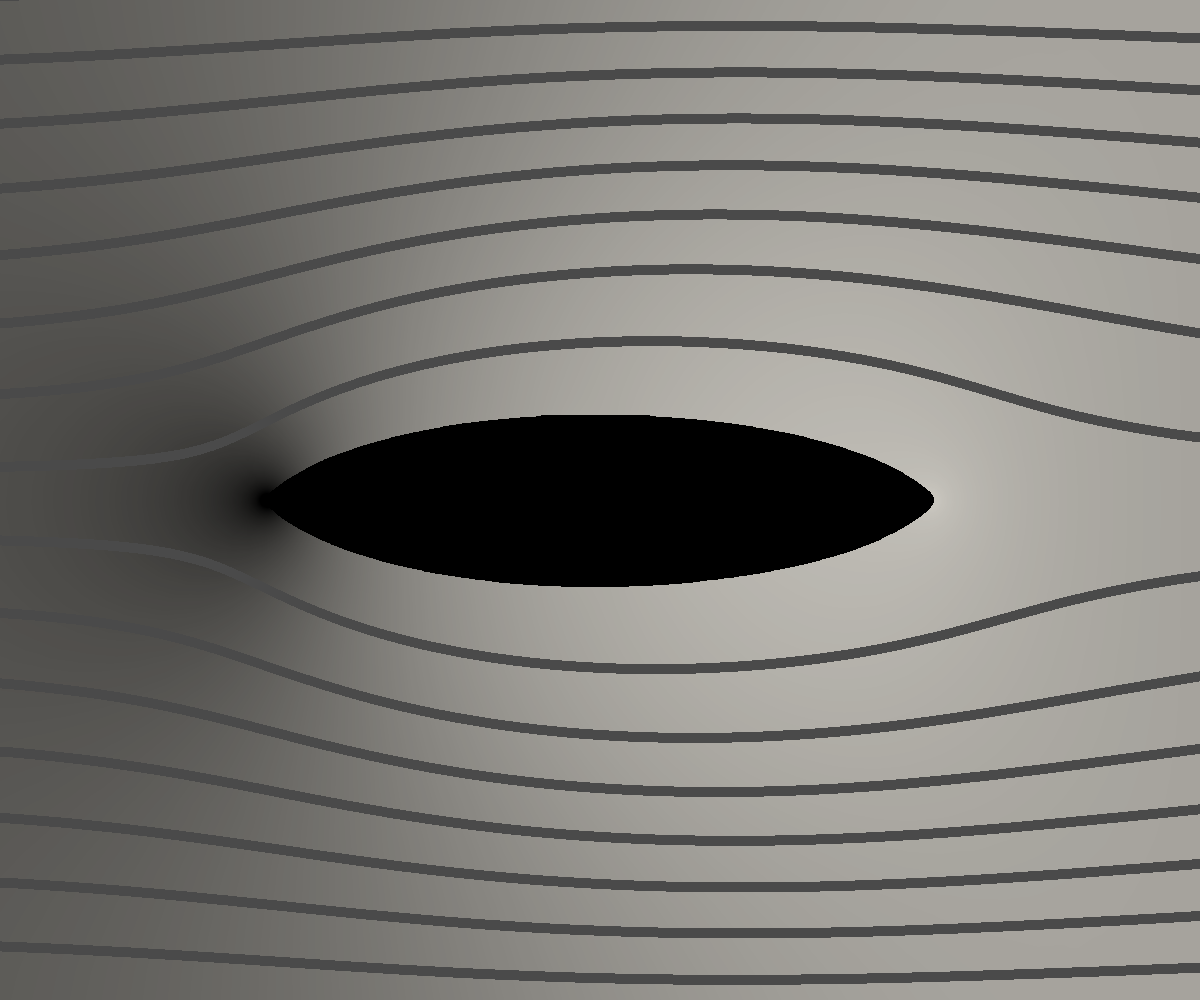}
  \hspace{1em}
  \includegraphics[width=0.3\textwidth]{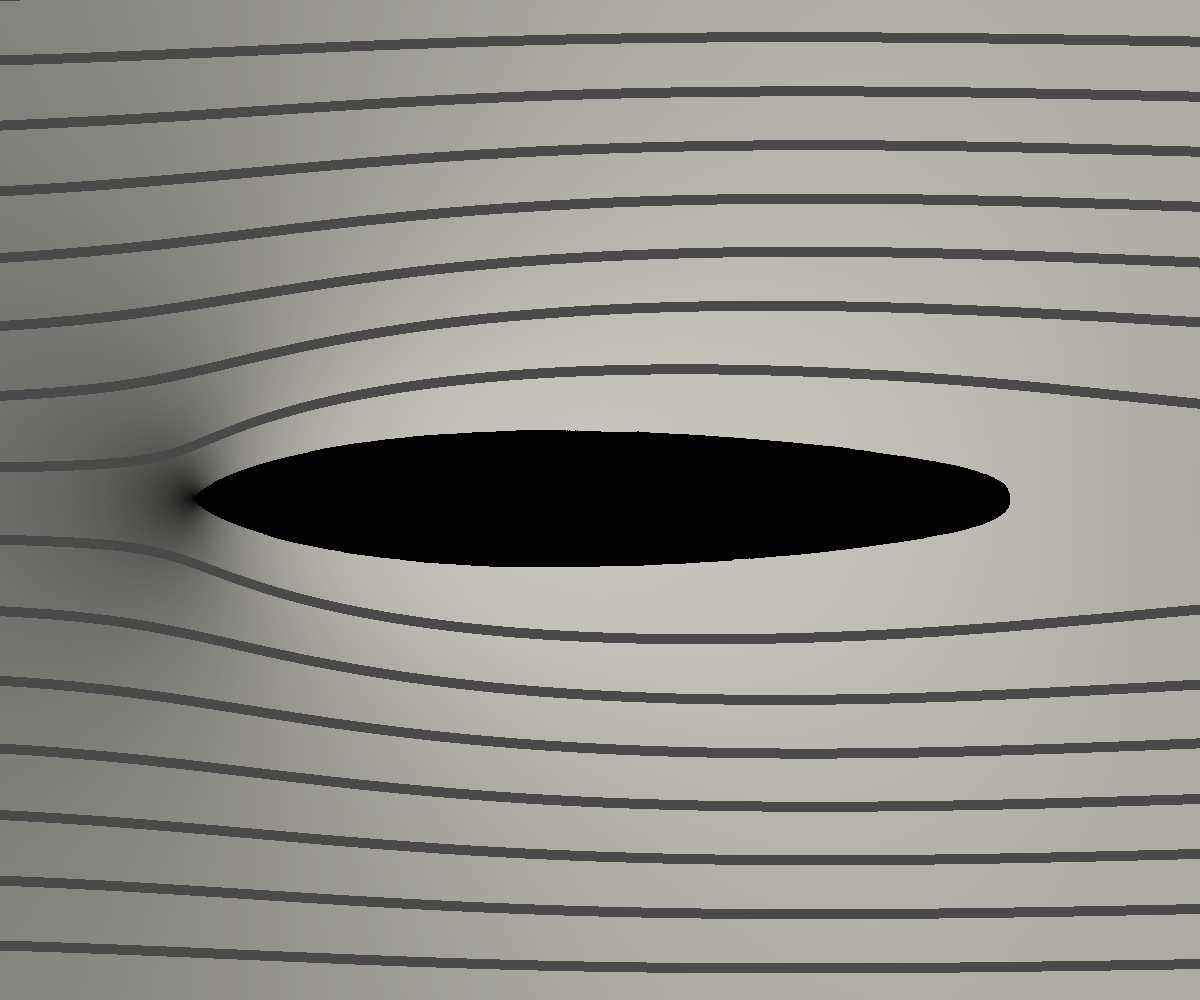}
  \caption{The optimized shape of the object using the volume formulation and
$\overline{\alpha} = 1$ with
  $\mu = 0.01$ (left) and $\mu = 0.001$ (right).    Compared to
Fig.~\ref{fig:num:volsurf:final} we observe a sharper rear and for $\mu = 0.001$
a symmetric airfoil shape emerges.  For $\mu = 0.01$ the drag is $F^{D} =
0.205542595$ and the velocity inside the object is of order $10^{-4}$, which is
two orders of magnitude smaller than in the case $\overline{\alpha} = 0.03$.  For
$\mu = 0.001$ the drag is $F^{D} = 0.041090517$ and the velocity inside the object
is of order $10^{-6}$. }
  \label{fig:num:volsurf:vol_large_aT}
\end{figure}

\subsection{Maximizing the lift with constraints on the total potential power}
\label{ssec:num:maxLift}
We give an example of dealing with a state constraint, namely we consider the maximization of the lift of an object under the constraint that the total potential power is bounded by some given value.  This is a non-linear constraint on the state variables of the constraint optimization problem, namely the velocity field.  To treat the highly non-linear potential power
constraint we use Moreau--Yosida relaxation.

The optimization problem we solve is
\begin{align*}
\begin{aligned}
\min_{(\varphi, \bm{u}, p)} \mathcal{J}_{\eps}^{\hp}(\varphi) & := 
\int_{\Omega} \frac{1}{2} \alpha_{\eps}(\varphi)  \abs{\bm{u}}^{2} + \frac{\bm{a}}{2}
 \cdot \left ( \mu \left ( \nabla \bm{u} + \left ( \nabla \bm{u} \right )^{\top} - p
\id \right ) \nabla \varphi \right ) \dx \\
& \quad + \int_{\Omega} \frac{\gamma}{\pi} \left ( \frac{1}{\eps}
\Psi(\varphi)  + \frac{\eps}{2} \abs{\nabla \varphi}^{2} \right ) \dx \\
&
+ \frac{\hp}{2} \max\left(0.0, \; \int_{\Omega} \frac{1+\varphi}{2} \frac{\mu}{2} \abs{\nabla \bm{u}}^{2}  - D
\abs{\Omega}^{-1} \dx \right)^{2},
\end{aligned}
\end{align*}
subject to $\varphi \in \Phi$, $\bm{u} \in \bm{H}^{1}_{\bm{g},
\sigma}(\Omega)$, $p \in L^{2}_{0}(\Omega)$ 
satisfying \eqref{NSweak}, where to realize the maximization of lift, we set $\bm{a} = (-1,0)^{\top}$ as the negative
unit vector perpendicular to the flow direction, and the parameter $\hp > 0$ penalizes violation of the constraint that the total potential power of the fluid region must be less than or equal to a prescribed value $D$:
\begin{align*}
\int_{\Omega} \frac{1+\varphi}{2} \frac{\mu}{2} \abs{\nabla \bm{u}}^{2}  \dx \leq D.
\end{align*}
\AAA The integral constraint for this optimization problem are volume constraints of the form $0.663 \leq \int_{\Omega} \e{\varphi} \dx \leq 0.665$ and the center of mass is fixed at $( 0.5,0.2)^\top$. \BBB

The set up is similar to that in Sec.~\ref{ssec:num:reproDragSurf}, where we set $\Omega = (0.0,1.7) \times (0.0,0.4)$, $\bm{g} = (1, 0)^{\top}$,
$\e{\varphi}^{0}(x) := -R[(0.5,0.2)^{\top},0.25,-1](x)$, i.e., a circle around $ \bm{m} = (0.5,0.2)^{\top}$
with radius $r = 0.25$.  For the penalization parameter we choose $\hp = 100$ and further numerical parameters are $\eps = 0.02$, $\overline{\alpha} = 2$, $\mu = 0.01$, $\gamma = 0.001$, and $D = 0.06$.
 
In Fig.~\ref{fig:num:maxLift:results} we show the resulting optimal shape of the object.  As expected we observe an inclined structure in order to maximize lift, but due to the constraint on the potential power, the angle of attack is restricted.  \AAA This is consistent with previous results in \cite[Fig.~2]{GHHKL}. \BBB
  
\begin{figure}
   \centering
\includegraphics[width=0.3\textwidth]{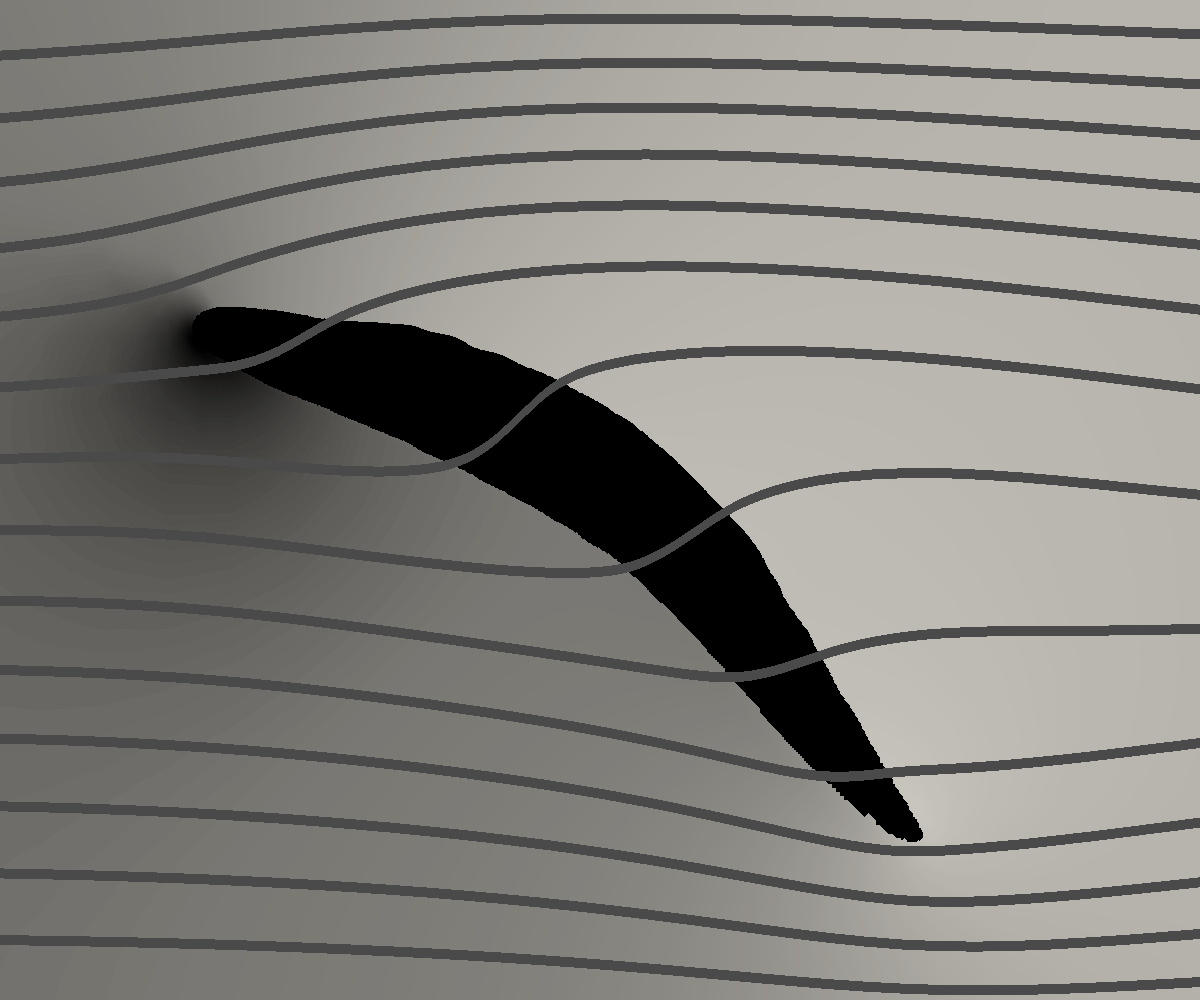}
\caption{The optimal shape for the maximization of the lift of an object, under a
constraint on
the dissipative power. We observe an inclined shape. }
\label{fig:num:maxLift:results}
\end{figure}

\AAA
\section{Conclusion}
In this paper, we formulate and analyze a phase field approximation for an abstract shape optimization problem subject to stationary Navier--Stokes flow with general objective functionals and integral state constraints.  We provide examples for the objective functionals and integral constraints that are of practical relevance, and we establish the existence of minimizers, and derive the first order optimality conditions.  A crucial point in the analysis is to show the existence of Lagrange multipliers corresponding to the integral constraints.  In the general setting we assume that the Zowe--Kurcyusz constraint qualification holds, and verify these assumptions for two specific examples.  The first involves integral constraints only in the variable $\varphi$, while the second involves the state variable $\bm{u}$.  The optimality conditions are solved using the VMPT method and several simulations are performed.  We demonstrate that the proposed phase field approach can handle topology optimization, and compare the results of drag minimization with previous works.  Lastly, we consider an example with an integral constraint involving the state variables, namely maximization of lift with constraint on the potential power.  The optimal shapes obtained are consistent with previous works on the lift-to-drag ratio for fluid flow with small Reynolds number.  
\BBB

\bibliographystyle{plain}  
\bibliography{GHKL_Ref}

\end{document}